\documentclass[12pt,a4paper]{amsart}

\usepackage[T1]{fontenc}
\usepackage{amssymb,amsmath,latexsym,amsthm,paralist,a4,mathrsfs,dsfont}
\usepackage{stmaryrd}
\usepackage{tikz-cd}
\usetikzlibrary{decorations.markings}
\usepackage{thmtools}

\makeatletter
\tikzcdset{
  open/.code     = {\tikzcdset{hook, circled};},
  closed/.code   = {\tikzcdset{hook, slashed};},
  open'/.code    = {\tikzcdset{hook', circled};},
  closed'/.code  = {\tikzcdset{hook', slashed};},
  circled/.code  = {\tikzcdset{markwith = {\draw (0,0) circle (.375ex);}};},
  slashed/.code  = {\tikzcdset{markwith = {\draw[-] (-.4ex,-.4ex) -- (.4ex,.4ex);}};},
  markwith/.code ={
    \pgfutil@ifundefined%
    {tikz@library@decorations.markings@loaded}%
    {\pgfutil@packageerror{tikz-cd}{You need to say %
      \string\usetikzlibrary{decorations.markings} to use arrows with markings}{}}{}%
    \pgfkeysalso{/tikz/postaction = {
      /tikz/decorate,
      /tikz/decoration={markings, mark = at position 0.5 with {#1}}}
    }
  },
}
\makeatother
\usepackage{calc}
\usepackage{accents}

\usepackage[headings]{fullpage}
\usetikzlibrary{babel}
\usetikzlibrary{positioning,calc}
\usepackage{varioref}
\usepackage[pdfpagelabels, pdftex]{hyperref}
\hypersetup{
  pdftitle={Largeness of the wild locus},
  pdfauthor={Katharina H\"{u}bner and Michael Temkin},
  pdfsubject={},
  pdfkeywords={},
  colorlinks=true,    
  linkcolor=red,     
  citecolor=blue,     
  filecolor=blue,      
  urlcolor=blue,       
  breaklinks=true,
  bookmarksopen=true,
  bookmarksnumbered=true,
  pdfpagemode=UseOutlines,
  plainpages=false,
  unicode=true
  }
\usepackage[capitalise]{cleveref}
\usepackage[inline]{showlabels}
\usepackage{changes}  
\usepackage{bbold} 
\newcommand{\Aut}{\mathrm{Aut}}

\newcommand{\m}{\mathfrak{m}}

\newcommand{\Z}{\mathds Z}

\newcommand{\Q}{\mathds Q}
\newcommand{\A}{\mathds A}
\newcommand{\D}{\mathds D}
\newcommand{\I}{\mathds I}

\newcommand{\R}{\mathds R}

\newcommand{\reduced}{\mathrm{red}}
\newcommand{\wild}{n^w}

\renewcommand{\P}{{\mathbb P}}

\newcommand{\disc}{\mathit{disc}}
\newcommand{\an}{\mathit{an}}

\newcommand{\cH}{{\mathcal H}}

\newcommand{\cO}{{\mathcal O}}
\newcommand{\cP}{{\mathcal P}}

\newcommand{\cR}{{\mathcal R}}

\newcommand{\cT}{{\mathcal T}}

\newcommand{\cX}{{\mathcal X}}

\newcommand{\liso}{\mathrel{\hbox{$\longrightarrow$} \kern-2.4ex\lower-1ex\hbox{$\scriptstyle\sim$}\kern1.7ex}}

\def\:{{\colon}}
\def\alp{\alpha}

\makeatletter
\newcommand{\zerounderset}[3][\mathord]{%
  #1{\vtop{
    \let\\\cr
    \baselineskip\z@skip\lineskip.25ex
    \ialign{\hidewidth$##$\hidewidth\crcr
      \omit$#3$\cr
      #2\crcr
    }%
  }}%
}
\makeatother

\newtheoremstyle{alexthm}
  {}
  {}
  {\sl }
  {}
  {\bf}
  {.}
  {.5em}
  {}
\theoremstyle{alexthm}

\newtheorem{theorem}{Theorem}[section]
\newtheorem*{theorem*}{Theorem}
\newtheorem{corollary}[theorem]{Corollary}
\newtheorem{proposition}[theorem]{Proposition}
\newtheorem{lemma}[theorem]{Lemma}
\newtheorem*{lemma*}{Lemma}

\newtheoremstyle{alexdef}
  {}
  {}
  {\rm }
  {}
  {\bf}
  {.}
  {.5em}
  {}
\theoremstyle{alexdef}
\newtheorem*{example*}{Example}
\newtheorem{example}[theorem]{Example}

\newtheorem{definition}[theorem]{Definition}
\newtheorem*{question*}{Question}

\DeclareMathOperator{\Spec}{\mathrm{Spec}}

\DeclareMathOperator{\Spa}{\mathrm{Spa}}

\DeclareMathOperator{\RZ}{\mathrm{RZ}}
\DeclareMathOperator{\supp}{\mathrm{supp}}

\DeclareMathOperator{\ch}{char}

\DeclareMathOperator{\Sch}{Sch}
\DeclareMathOperator*{\colim}{colim}

\DeclareMathOperator{\ad}{\textit{ad}}
\DeclareMathOperator{\trdeg}{\textit{trdeg}}

\definecolor{darklimegreen}{RGB}{31,142,8}

\begin{document}

\hfuzz=4pt
\title{Largeness of the wild locus}
\author{Katharina H\"{u}bner and Michael Temkin}
\date{\today}
\thanks{This work was supported by ISF Grant 1203/22. The first author acknowledges support by Deutsche Forschungsgemeinschaft (DFG) through the Collaborative Research Centre TRR 326 "Geometry and Arithmetic of Uniformized Structures", project number 444845124, and the second author acknowledges support by ERC Consolidator Grant 770922 - BirNonArchGeom, a grant from the School of Mathematics at the IAS, and support of MPIM-Bonn}

\begin{abstract}
 We show that the wild locus of an étale morphism of smooth adic spaces over an affinoid field is either empty or contains a divisorial point.
 As a consequence, we establish the equivalence of curve tameness and valuation tameness without using resolution of singularities.
 Moreover, given a finite étale map of regular schemes $f \colon Y \to X$ that is curve tame over a base~$S$, we construct a compactification $\bar{f} \colon \overline{Y} \to \overline{X}$ over~$S$ that is numerically tame.
\end{abstract}

\maketitle
\tableofcontents

\section{Introduction}

This research grew out of a question by Hélène Esnault on numerical tameness.
In order to understand this question let us go back to the beginnings of the investigation of tame covering spaces.
In \cite{SGA1}, and more thoroughly in \cite{GM71}, Grothendieck and Murre study covering spaces of a connected regular scheme~$\overline{X}$ that are tamely ramified along a fixed normal crossing divisor $D \subseteq \overline{X}$.
More precisely, these are integral morphisms to~$\overline{X}$ whose restriction to $X := \overline{X} \setminus D$ is finite étale, and such that the extension of function fields is tamely ramified with respect to all discrete valuations corresponding to the prime divisors in~$D$.
The underlying reason why this leads to a well behaved tame fundamental group $\pi_1^t(\overline{X},D)$ is Abhyankar's lemma telling us that étale locally on~$\overline{X}$, tame coverings of~$\overline{X}$ are obtained by extracting $n$-th roots of the functions defining the prime divisors in~$D$ where~$n$ is prime to the residue characteristic.
This implies that tame coverings of~$\overline{X}$ are not only tame at the generic points of~$D$, but at all points of~$D$.

In many situations the perspective on tame coverings is slightly shifted in the sense that we start with a regular scheme~$X$ over some base~$S$ and want to study finite étale morphisms $Y \to X$ that are tame over~$S$.
If we can choose a regular compactification~$\overline{X}$ of~$X$ over~$S$ whose boundary $D = \overline{X} \setminus X$ is a normal crossing divisor, tameness of a finite étale covering $Y\to X$ means precisely that it extends to a covering $\overline{Y}\to\overline{X}$ that is tamely ramified along~$D$.
Luckily this notion of tameness is independent of the choice of the compactification.
But unfortunately, we cannot guarantee the existence of nice compactifications without using resolutions of singularities, which is still a conjecture in positive and mixed characteristic.

If we want a theory of tame coverings that does not rely on a conjecture, we have to leave the paradise of regular schemes and normal crossing divisors.
Here the path splits into two directions.
The idea of the first approach is to define tameness of a finite étale covering $Y \to X$ without using a compactification.
This resulted in various notions of tameness, among them the concept of curve tameness (see \cite{Wie08} for the introduction of the concept and \cite{KeSch10} for a systematic treatment, the idea of testing a property on all curves goes back to Deligne, who defined regular singular connections in this way) requiring all base changes to curves to be tame.
The underlying idea is that on curves (which have to be defined with care in case~$S$ is not a field) it is clear what it means to be tame because curves have unique regular compactifications, whose boundary is a finite number of closed points in the regular locus, i.e. a divisor with normal crossings.
Another idea in this direction is the concept of divisor tameness where we meassure tameness with respect to all prime divisors showing up in some compactification.
In \cite{KeSch10} Kerz and Schmidt show that curve tameness and divisor tameness (and some more notions) are equivalent.
Slightly different in nature is the notion of valuation tameness where we use \emph{all} $S$-valuations of the function field of~$X$ to define tame covering spaces.
In contrast to curve tameness and divisor tameness, the relevant valuations may have a complicated geometric meaning and correspond to an infinite sequence of local modifications.
It is clear from the definition that valuation tameness implies curve and divisor tameness.
The converse direction, however, is hard and \cite{KeSch10} establishes it only in the presence of resolutions of singularities.

The second approach fixes a compactification~$\overline{X}$ and studies finite Galois covers of~$X$ that extend to~$\overline{X}$ in such a way that the orders of all inertia groups at all points of~$\overline{X}$ are prime to the residue characteristics.
These covers are called \emph{numerically} tame with respect to~$\overline{X}$ and this tameness depends on the choice of the compactification.
Numerical tameness is less valuation theoretic but rather scheme theoretic and plays a prominent role in the study of local systems.
If a Galois cover of~$X$ is numerically tame with respect to some compactification~$\overline{X}$, we can deduce that it is valuation tame.
In case~$\overline{X}$ is a normal crossing compactification of~$X$, any valuation tame cover of~$X$ is numerically tame with respect to~$\overline{X}$ by \cite[Theorem~5.4]{KeSch10}.
Unfortunately, we know how to construct normal crossing compactifications only by using resolutions of singularities.
Without assuming this conjecture, we cannot expect to find a compactification~$\overline{X}$ such that all valuation tame (or curve or divisor tame) covers of~$X$ are numerically tame with respect to~$\overline{X}$.

\begin{question*}[Hélène Esnault]
 Let~$S$ be a noetherian scheme and~$X$ a normal, connected scheme of finite type over~$S$.
 Given a curve tame Galois cover $f \colon Y \to X$, can we find a compactification~$\overline{X}$ of~$X$ over~$S$ such that~$f$ is numerically tame with respect to~$\overline{X}$?
\end{question*}

In \cref{compnumerical} we answer this question in the affirmative under some mild assumptions on~$S$:
We require~$S$ to be integral, pure dimensional, separated, and of finite type over an excellent Dedekind scheme.
The idea behind the proof is to use a compactness argument on the limit over all compactifications -- the relative Riemann Zariski space $\RZ_X(S)$ studied in \cite{Tem11}.
But in order to have access to this compactness argument we have to meassure tameness with respect to all valuations corresponding to points in $\RZ_X(S)$ (not just those coming from points on curves as in the definition of curve tameness).
For technical reasons we work with the adic space $\Spa(X,S)$ instead of $\RZ_X(S)$ enabling us to use the full strength of nonarchimedean geometry.
So in line with \cite{HueAd} and \cite{tamepi1}, we use the following notion of tameness:

\begin{definition}
	An étale morphism of schemes $Y \to X$ over~$S$ is \emph{tame with respect to}~$S$ if the étale morphism of adic spaces $\Spa(Y,S) \to \Spa(X,S)$ is tame, i.e., all residue field extensions are tame extensions of valued fields.
\end{definition}

Now the difficult question is whether curve-tameness is equivalent to our notion of tameness.
Assuming resolution of singularities, this question is not so hard to answer with the methods of \cite{KeSch10} telling us that the two notions are indeed equivalent.
However, for an unconditional result we cannot work with nice compactifications and accordingly our approach is valuation theoretic.
We do not prove directly that curve tameness implies tameness but rather take a detour via divisor tameness, which is equivalent (unconditionally) to curve tameness for regular schemes by \cite{KeSch10}.
Prime divisors in the boundary of a compactification of~$X$ over~$S$ correspond to \emph{divisorial points} of $\Spa(X,S)$ (\cref{description_divisorial_points_Spa}), so we need to show that the wild locus over $\Spa(X,S)$ is either empty or contains a divisorial point.
After a process of analytification discussed in \cref{sect:analytification}, we are led to study the wild locus of an étale morphism of adic spaces locally of finite type over an analytic field.
Our main structural result is the following:

\begin{theorem}[\cref{R_f_divisorial_point}] \label{main theorem}
	Let $f \colon Y \to  X$ be an étale morphism of smooth adic spaces over an affinoid field $(k,k^+)$ of finite rank.
	Then the wild locus $R_f \subseteq Y$ is either empty or contains a divisorial point.
\end{theorem}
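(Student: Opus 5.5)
The plan is to reduce to a question about a single point and its specializations/generizations, and then use the structure of the adic spectrum of a smooth space near a wild point. First I would recall that the wild locus $R_f$ is closed under specialization? Rather, the key structural fact I would establish first is that $R_f$ is a closed subset of $Y$ (tameness of the residue field extension $k(y)/k(f(y))$, completed, is an open condition in the constructible topology and stable under generization, since a generization has a coarser valuation and tameness passes to the coarsening). Being closed and nonempty, if $R_f\neq\emptyset$ it contains a point $y$; replacing $y$ by the maximal generization inside $R_f$ (which exists since $R_f$ is closed and generizations of a point form a chain), I may assume every proper generization of $y$ is tame. I would then localize: shrink $Y$ and $X$ to affinoids, and use étale coordinates $X\to \mathbb D^n$ (smoothness) so that the problem becomes about finite extensions of the completed residue fields of points of a polydisc.

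The second step is to classify points of $X$ by their type: with $d(x)=\trdeg$ of the residue field of $\mathcal H(x)$ over $\tilde k$ plus the rational rank of $|\mathcal H(x)^\times|/|k^\times|$ (Abhyankar defect data), divisorial points are those of maximal transcendence degree $n-1$ over the residue field with the rank-one value group gaining one extra generator, i.e.\ Abhyankar points of a specific kind. For Abhyankar (defectless) points, wildness of a finite extension is controlled by ramification index and inseparability of residue fields, both of which can be read off on an explicit neighbourhood in the space of valuations. So the plan is: (a) show that if $y\in R_f$ then $R_f$ contains an Abhyankar point in a neighbourhood of $y$ — using that wildness is an open condition in suitable parameter spaces of valuations (e.g.\ on the Berkovich/adic fibres of a projection $\mathbb D^n\to\mathbb D^{n-1}$, where points on the one-dimensional fibres are classified into types 1--5 and the wild locus of a finite étale map of curves is a finite graph whose ends are type 2 or 5 points); (b) inductively on $n$, use the fibration $\mathbb D^n\to\mathbb D^{n-1}$ and the one-dimensional case to move an Abhyankar wild point to one whose image in the base is divisorial and which is itself type 2 in the fibre, yielding a divisorial point of $\mathbb D^n$.

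For the curve case, I would use the known structure of the different/multiplicity function for finite morphisms of curves (Temkin's ``topological ramification'' results): the different is piecewise monomial along skeleta, wildness at a type 1/4 point propagates to an interval of type 2/3 points, and the locus where the extension is wild is a closed subgraph; an endpoint or any generic interior point of positive length can be chosen of type 2 (divisorial over the finite rank field, using that $|k^\times|$ has divisible hull dense enough in a rank one component). The finite rank hypothesis enters in handling higher rank points: I would reduce to rank one by passing to the maximal generization as above and by noting that a higher rank specialization is tame iff its rank-one generization is tame and the residual extension of the secondary valuation is tame.

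The main obstacle I expect is step (b): in the relative situation over a base point of dimension $n-1$ that is not defectless, the fibre is a curve over a non-discretely-valued, possibly non-stable field $\mathcal H(b)$, where defect extensions occur and the curve-theoretic argument may produce a wild type 2 point whose image in the base is not divisorial. To handle this I would first try to perturb the base point: use that wildness is open in the direction of the base (continuity of the different function on $\mathbb D^{n-1}$ via a relative skeleton) so that the base point can be moved to a nearby divisorial point of $\mathbb D^{n-1}$ while keeping a wild point in the fibre; this requires a semicontinuity result for the different in families, which I expect to be the technical heart.
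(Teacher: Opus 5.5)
There is a genuine gap. It sits where you suspect, but the tools you plan to use there are not available.

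\textbf{Openness of wildness is false.} As you note yourself, $R_f$ is \emph{closed}: the tame locus is open. So ``wildness is an open condition'' fails in the topology of $Y$. There is also no openness of wildness ``in the direction of the base'' that would let you perturb $g(x)$.

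\textbf{What must actually be proved in the curve case.} The true statement is one along segments: the wild degree $\wild_f$ is constant on sufficiently small constructible segments at every point. This is the real content of the curve case, and you treat it as known.

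\textbf{Why \cite{CTT16} does not supply it.} \cite{CTT16} gives this only for degree $p$ maps over algebraically closed rank one fields, via the different. Even there the wild locus is the radial set $C(\Gamma,\delta^{1/(p-1)})$, which is not a finite graph. The fibres of a curve fibration are curves over fields such as $k(g(x))$. These fields are not algebraically closed, may carry defect extensions, and may have higher rank. Defect wild extensions (e.g.\ almost unramified ones) are undetected by the different. So neither the different nor a semicontinuity of the different in families can control wildness there.

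\textbf{Divisorial points.} Your description is off. A vertically divisorial point of an $n$-dimensional space has $F_x=n$ and $E_x=0$. Residual transcendence degree $n-1$ plus one extra generator of the value group occurs only for horizontally divisorial points over a trivially valued point.

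\textbf{The missing idea.} You never need the image in the base to be divisorial: induct on $F_y$ instead of on $n$.
\begin{enumerate}
\item Take $y\in R_f$ with $F_y<n$. Choose $t_1,\dots,t_F\in k(x)^+$ with $k(x)/k(t_1,\dots,t_F)$ separable and reductions forming a transcendence basis of $k(x)^\succ$ (\cref{transcendence_basis}).
\item Use these to build a local smooth curve fibration $g\colon U\to Z$ with $F_{g(x)}=F_x$. A fixed projection $\D_n\to\D_{n-1}$ will not do, since $x$ may already be divisorial in its fibre and the curve case then gives nothing.
\item Apply the curve result in the single fibre over $g(x)$. This is a smooth curve over the finite-rank affinoid field $(k(g(x)),k(g(x))^+)$.
\item This yields $y'\in R_f$ divisorial in the fibre, hence $F_{y'}=F_y+1$. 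Refibre around $y'$ and iterate until $F=n$.
\end{enumerate}
The discretely ringed case additionally needs analytification, and a horizontal-generalization lemma when $x$ is already divisorial in its fibre.

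\textbf{How the paper proves the curve theorem.} All the weight then lies on the curve theorem over \emph{arbitrary} affinoid fields of finite rank. The paper uses the wild degree, which is multiplicative and upper semicontinuous; this gives a sandwich lemma along segments mapping homeomorphically. The steps are:
\begin{enumerate}
\item reduction to towers of degree $p$ maps by tame henselization;
\item \cite{CTT16} plus induction on the rank over algebraically closed fields;
\item invariance under purely inseparable base change;
\item the computation $\wild(\ell(T)/k(T))=\wild(\ell/k)$ for monomial valuations;
\item a translation-symmetry argument at endpoints for $\A^1_\ell\to\A^1_k$.
\end{enumerate}
Since every constructible segment contains a divisorial point, this settles the curve case.
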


The essential part of this claim is when $X$ and $Y$ are curves since the general case reduces to this using curve fibrations (see \cref{sect:wild_higher_dim}). We emphasize that the argument only works if we study curves over arbitrary affinoid fields -- of higher rank and not algebraically closed -- as these show up as fibers in a fibration even if we start with a rigid space over an algebraically closed nonarchimedean (i.e. rank one) field. The structure of curves over arbitrary affinoid fields has not been systematically studied in
the literature. We closed this gap with our companion paper \cite{adiccurves} where we developed the foundations to describe curves as metric
graphs in the same spirit as it is done for Berkovich spaces with the difference that the edges of the graph are not parametrized by the real
numbers but by \emph{rangers}, which can be thought of as a completion of the value group of the base field. In particular, this allows
us to make sense of paths between points on a curve and to study segments (small paths parametrized by rangers) at a point.

Our basic invariant used to control the wild locus is surprisingly simple -- it is just the {\em wild degree} of the residue field extension $k(y)/k(x)$. Since every constructible segment contains a divisorial point, the existence of divisorial points in the wild locus follows from our main theorem on étale morphisms of curves:

\begin{theorem}[\cref{wild_degree_constant_segments}]
	Let $f \colon Y \to X$ be an étale morphism of smooth curves over an affinoid field $(k,k^+)$ of finite rank.
	Then the wild degree is constant on sufficiently small constructible segments of~$Y$.
\end{theorem}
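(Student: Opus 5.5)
The plan is to reduce the statement to a local computation at the endpoint of a segment, where the residue field extensions along the segment can be compared with the extension at the endpoint. Let $\gamma$ be a constructible segment of $Y$ starting at a point $y$, with image segment $f(\gamma)$ starting at $x=f(y)$. The curve theory of \cite{adiccurves} gives the tools: segments parametrized by rangers, and the fact that on a sufficiently small segment $f$ restricts to a finite map of segments of constant local degree. Since $f$ is étale, I will first shrink $Y$ and $X$ around $y$ and $x$. Then $f$ becomes finite étale over an affinoid neighbourhood of $x$ with $f^{-1}(x)=\{y\}$. I then work with the residue field extensions $k(y')/k(x')$ for $y'\in\gamma$, $x'=f(y')$. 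The wild degree is the $p$-part of the degree divided by the tame part, i.e. the defect times the $p$-parts of the ramification index and of the inseparable residue degree. I plan to express each of these three pieces in terms of data attached to the \emph{germ} of the segment rather than to individual points.

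The key step is to treat points of a segment uniformly. All points of a small constructible segment other than its endpoint have the same "type": they are the points $\eta_r$ of a family of discs or annuli of radius $r$, with $r$ ranging over an interval of a ranger. After a suitable coordinate $t$ and a local finite étale description of $f$, the completed residue fields $\widehat{k(\eta_r)}$ are generalized Gauss valuations on $k(t)$ (or on a finite extension), depending on $r$. I will show three things. First, the value group of $k(y')$ over $k(x')$ is generated by $|k^\times|$ together with $r$ and a finite part. Hence the ramification index $e(y'/x')$ is determined by the slope of $f$ on the segment, which is constant on small segments by piecewise monomiality (the constructible structure). Second, the residue field extension is $\widetilde{k(x')}(\tilde t)$ over its counterpart, given by a fixed extension of $\tilde k$-function fields, so it is constant. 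Third, the total local degree $n$ is constant on a small segment. Granting these, the defect $n/(ef)$ is constant, and hence so is the wild degree.

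The main obstacle will be constancy of the local degree together with the residue degree: an inseparable residue part can jump at special radii. I will handle this first by using that $f$ is finite étale on the segment, so the fiber cardinality and degrees are locally constant along the segment away from finitely many breakpoints. The ranger parametrization guarantees that a sufficiently small segment avoids breakpoints except at its endpoint, and constructibility makes the breakpoint set finite. If the direct argument fails, the fallback is to compare via the different or discriminant function along the segment. That function is piecewise monomial in $r$, and on a breakpoint-free piece its slope records the wild ramification. I would deduce constancy of the wild degree from linearity of this function, in the spirit of the Berkovich-curve different function of Cohen--Temkin--Trushin.
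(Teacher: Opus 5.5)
There is a genuine gap: your factor-by-factor strategy rests on a false premise. The formula $n^w=d\cdot e_{(p)}\cdot f_{\mathrm{insep}}$ (after henselizing) is fine. What fails is your claim that the points of a small segment other than its endpoint all have the same type, and hence that $e$, the residue extension and the local degree are each constant.

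By \cref{segment_divisorial} (density of $\overline{\Gamma}$ in the rangers), every constructible segment with more than one point contains a dense set of divisorial points. These have $F=1$, a function field as residue field, and value group inside $|k^\times|^{\Q}$. They are interleaved with cut or infinitesimal points, which have $F=0$ and residue field algebraic over $k^\succ$. Your description of the residue extension as a fixed extension of function fields applies only to the divisorial points, and $e$ and $f_{\mathrm{insep}}$ genuinely jump on a dense set.

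For instance, take $k=\mathbb{C}_p$ and the étale map $t\mapsto t^p$ on $\{|p|\le|t|\le 1\}$, whose slope is $p$ throughout.
At $\eta_r$ with $r\in|k^\times|$ one has $e=1$ and a purely inseparable residue extension of degree $p$.
At $\eta_r$ with $r\notin|k^\times|$ one has $e=p$ and trivial residue extension.
Only the product $n^w$ is constant, so no slope or breakpoint argument can establish your first two claims.

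Separately, you cannot in general shrink so that $f$ is finite étale with $f^{-1}(x)=\{y\}$. The paper obtains finiteness only after base change to the tame henselization (\cref{red_fet}).

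Your fallback via the different does not close the gap either. The different only detects wildness when it is non-trivial, and almost unramified (defect) wild extensions have trivial different. This is exactly the case the paper must handle for $\A^1_\ell\to\A^1_k$ with $\ell/k$ a wild defect extension. There the real content is that the monomial points on $[0,\infty)$ keep wild degree $n^w(\ell/k)$ (\cref{wild_monomial_valuation}, proved via orthogonality of the monomial valuation). Moreover, the different function of \cite{CTT16} is only available over algebraically closed rank-one fields. Even there, the paper needs the radial description of the wild locus of a degree-$p$ map (\cref{alg_closed_ramification_locus}), extended to infinitesimal points through Riemann--Hurwitz on residue curves, not just piecewise monomiality.

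What is missing from your plan is the paper's actual architecture:
\begin{enumerate}[(i)]
 \item reduce to compositions of degree-$p$ finite étale maps after tame base change, and glue using \cref{sandwich} and \cref{segment_homeomorphism};
 \item treat the geometric rank-one case via the radial structure of the wild locus;
 \item induct on the rank through the residue curves $Y^\succ_{y_\eta}$;
 \item pass to general $k$ via \cref{purely_inseparable_wild}, \cref{basechange_to_bar_k_stable} and an étale coordinate to $\A^1_k$, which reduces the problem to $\A^1_\ell\to\A^1_k$;
 \item handle endpoints by the translation-symmetry argument combined with upper semicontinuity (\cref{wild_degree_A_1_infinite}).
\end{enumerate}
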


Roughly, for the proof we have to study two cases: arbitrary étale maps of smooth curves over algebraically closed fields and finite étale maps of the affine line induced by separable base field extensions.
The key input for the first case (treated in \cref{sect:alg_closed_ramification_locus}) is a structural result on the wild locus of a finite morphism of Berkovich curves over an algebraically closed field in \cite{CTT16}.
There the authors show that for a degree~$p$ morphism, the wild locus is radial around a suitable skeleton.
In particular, if the morphism is wild at a given point, it stays wild when we walk along a segment towards the skeleton.
For the second case (treated in \cref{sect:wild_degree_general}) we first study the wild degree over the interval $[0,\infty)$ of the affine line.
This suffices to construct segments where the wild degree is constant starting at some monomial point.
For arbitrary points we use a symmetry argument.

Finally, we want to stress that the above constructibility results in the study of the wild locus will be seriously refined in upcoming works by studying a quantitative invariant measuring wildness of the extension, the log different. We do not know to what extent the log different governs the shape of the wild locus extending the results for Berkovich curves over an algebraically closed field (see \cite{CTT16}), but we will prove that at the very least it is piecewise monomial on skeletons of Berkovich spaces and pathwise continuous on curves also at points of type four. If $f\:Y\to X$ is an \'etale morphism of adic spaces over an algebraically closed base field with a non-empty wild locus, then this locus contains a divisorial point $y$ by Theorem~\ref{main theorem}. In this case, the log different is strictly smaller than 1 at $y$ and can be used to study the wild locus near $y$ with a much better precision than the wild degree. However, in general there exist wild extensions undetected by the log different (for example, almost unramified extensions of non-archimedean fields), and the wild degree seems to be the only reasonable invariant which controls them.

\subsection*{Aknowledgements}
We want to thank Hélène Esnault whose question initiated this research.
We also thank her for various discussions and for taking a look at the manuscript and giving helpful comments that improved the exposition.

\section{Preliminaries}

\subsection{The base field}

We mainly work over an affinoid field $(k,k^+)$.
So $k$ is a field with a valuation $|\cdot|$ and valuation ring~$k^+$.
The topology on~$k$ can be either the valuation topology or the discrete topology.
In the first case~$k^+$ is required to be microbial and we call $(k,k^+)$ \emph{analytic} and in the second case \emph{discrete}.
We do not require~$k^+$ to be of rank~$1$.
Hence, $\Spa(k,k^+)$ might have more than one point.

In the analytic case the subring $k^\circ \subseteq k$ of powerbounded elements is a valuation ring of rank~$1$ containing~$k^+$.
It is the localization of $k^+$ at a topologically nilpotent element~$\varpi$.
We set
\[
 k^\succ := k^+/\m_{k^+},
\]
i.e., $k^\succ$ is the residue field of~$k^+$.
Throughout this article the characteristic of $k^\succ$ is $p > 0$.

We write $|\cdot|$ for the valuation corresponding to~$k^+$.
The residue field $k(x)$ at a point $x$ of an adic space~$X$ locally of finite type over $(k,k^+)$ is an extension of~$k$ of finite topological transcendence degree.
For an element $a \in k(x)$ we denote its valuation by $|a(x)|$.
We write $k(x)^+$ for the corresponding valuation ring and $k(x)^\succ$ for its residue field, which we also call \emph{specialization field} of~$x$ in order to distinguish it from the residue field~$k(x)$.

\subsection{The wild degree}

\begin{definition}
 Let $(L,w)/(K,v)$ be a finite extension of valued fields.
 Its \emph{wild degree} $\wild(L/K)$ is the degree of the corresponding extension of maximal tame extensions $L^t/K^t$ (taken with respect to any extension of~$w$ to an algebraic closure of~$L$).
\end{definition}

Note that the wild degree is independent of the choice of algebraic closure of~$L$ and of an extension of~$w$ to this algebraic closure.
In order to apply this concept to the residue field extensions of étale morphisms, we need the following observation.

\begin{lemma} \label{etale_finite_fibers}
 Let $f\colon  Y \to X$ be an étale morphism of adic spaces and $y \in Y$.
 Set $x = f(y)$.
 Then the extension of completed residue fields $\widehat{k(y)}/\widehat{k(x)}$ is a finite separable extension of valued fields.
\end{lemma}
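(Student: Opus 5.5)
The plan is to reduce to the structure of étale morphisms of adic spaces established by Huber, and then to control completed residue fields through a local finite presentation. Since the statement concerns a single point $y$ and its image $x$, it is local on $Y$ and $X$. I will use Huber's characterisation of étale morphisms: locally on $Y$ around $y$, $f$ factors as an open immersion $Y \supseteq V \hookrightarrow W$ followed by a finite étale morphism $W \to U$, where $U \subseteq X$ is an open neighbourhood of $x$ (Huber, \emph{Étale cohomology of rigid analytic varieties and adic spaces}, Lemma~2.2.8). Open immersions induce isomorphisms of residue fields together with their valuations, hence of the completions. This reduces us to $f$ finite étale, and after shrinking further to $X=\Spa(A,A^+)$ affinoid with $Y=\Spa(B,B^+)$, where $B$ is a finite étale $A$-algebra and $B^+$ is the integral closure of $A^+$ in $B$.

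In the finite étale case the key step is the identity
\[
 \widehat{k(x)}\otimes_A B \cong \prod_{y'\in f^{-1}(x)} \widehat{k(y')}
\]
or at least that each $\widehat{k(y')}$ is a factor of the left-hand side. For this I pass to the fibre: the base change $\Spa(\widehat{k(x)},\widehat{k(x)}^+)\times_X Y$ is $\Spa$ of the finite étale $\widehat{k(x)}$-algebra $C:=\widehat{k(x)}\otimes_A B$. Such an algebra is a finite product of finite separable field extensions $L_i$ of $\widehat{k(x)}$. Each $L_i$ is complete, since it is finite over a complete valued field, and it carries the valuations extending that of $\widehat{k(x)}$. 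The points of this spectrum are the pairs $(i,w)$ with $w$ a valuation on $L_i$ extending the one on $\widehat{k(x)}$, since $C^+$ is the integral closure of $\widehat{k(x)}^+$. These pairs correspond exactly to the points of $f^{-1}(x)$, and the completed residue field at such a point is $L_i$, with its valuation ring the corresponding extension. It follows that $\widehat{k(y)}$ is some $L_i$, which is a finite separable extension of valued fields of $\widehat{k(x)}$, as claimed.

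The main obstacle is justifying the identification of the completed residue field at $y$ with a factor $L_i$ of the fibre algebra. One needs that taking the fibre over $x$ commutes with passing to completed residue fields, i.e.\ that the map $B\to \widehat{k(y)}$ factors through $C$ and that the image of $C$ is dense in $\widehat{k(y)}$. Density is clear, because the image of $B$ generates a dense subfield over $k(x)$. Since $L_i$ is finite over a complete field, that image is closed, so it equals $\widehat{k(y)}$. In the analytic case this requires finite-dimensional vector spaces over a complete nonarchimedean field to carry a unique Hausdorff topology; with higher-rank valuations I would argue via the rank-one subring $k(x)^\circ$, or alternatively cite Huber's result (Huber, Lemma~1.5.2 / Prop.~1.7.1) on residue fields of fibres. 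In the discrete case there is no completion issue and the argument is purely algebraic: $k(y)$ is a factor of $k(x)\otimes_A B$.
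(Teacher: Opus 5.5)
Your proof is correct and is essentially the paper's argument. In both, $f$ is locally given by an algebraic étale map $A\to B$, and the density of $k(\supp y)$ in $k(y)$, together with finite separability at the algebraic level, forces $\widehat{k(y)}$ to be a finite separable extension of $\widehat{k(x)}$. Your observation that the image of $\widehat{k(x)}\otimes_A B$ in $\widehat{k(y)}$ is a complete, hence closed, dense subfield is exactly what justifies the paper's last sentence.

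The paper uses Huber's Corollary~1.7.3~iii) instead of the factorization through a finite étale map; this gives an étale but not necessarily finite $A\to B$. That suffices, since $\widehat{k(x)}\otimes_A B$ is then still a finite product of finite separable fields. Nor does the paper need the bijection between $f^{-1}(x)$ and the pairs $(i,w)$, which you rightly bypass. As you stated it, that bijection is slightly off, because $\Spa(\widehat{k(x)},\widehat{k(x)}^+)\times_X Y$ also contains points over vertical generalizations of $x$.
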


\begin{proof}
 By \cite[Corollary~1.7.3~iii)]{Hu96}, $f$ is locally around~$y$ of the form $\Spa(B,B^+) \to \Spa(A,A^+)$, where $A \to  B$ is étale in the algebraic sense.
 We consider the supports $\supp y$ and $\supp x$ of $y$ and~$x$ in $\Spec B$ and $\Spec A$, respectively.
 Then $k(\supp y)/k(\supp x)$ is finite separable because it is a residue field extension of $B/A$.
 The field $k(\supp y)$ is naturally embedded into~$k(y)$ as a dense subfield and similarly for~$k(x)$.
 This implies that the extension of completions $\widehat{k(y)}/\widehat{k(x)}$ is finite separable.
\end{proof}

\begin{definition}
 Let $f\colon Y \to X$ be an étale morphism of adic spaces and $y \in Y$ a point with image $x \in X$.
 The \emph{wild degree} $\wild_{f}(y)$ of~$f$ at~$y$ is the wild degree of the completed residue field extension $\widehat{k(y)}/\widehat{k(x)}$.
\end{definition}

 Equivalently  we can define the wild degree of~$f$ at~$y$ as the degree of the corresponding morphism of tame henselizations
 \[
  f_{\bar{y}}^t \colon  Y_{\bar{y}}^t \longrightarrow X_{\bar{y}}^t.
 \]
 at any geometric point~$\bar{y}$ over~$y$.

\begin{lemma} \label{wd_multiplicative}
 Let
 \[
  Z \overset{g}{\longrightarrow} Y \overset{f}{\longrightarrow} X
 \]
 be étale morphisms of adic spaces and~$z \in Z$ a point mapping to $y \in Y$ and $x \in X$.
 Then
 \[
  \wild_{f \circ g}(z) = \wild_g(z) \cdot \wild_f(y).
 \]
\end{lemma}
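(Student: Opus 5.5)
We reduce the statement to multiplicativity of the wild degree in a tower of finite separable extensions of valued fields. Set $K = \widehat{k(x)}$, $L = \widehat{k(y)}$ and $M = \widehat{k(z)}$. By \cref{etale_finite_fibers}, applied to $g$, $f$ and $f\circ g$, we get a tower $K \subseteq L \subseteq M$ of finite separable extensions of complete valued fields. The valuations are compatible, since the embeddings come from the maps of residue fields and these respect the valuations at $z\mapsto y\mapsto x$. Because $K$ and $L$ are complete (henselian, at least for the rank-one part of the topology), the valuation on each field extends uniquely to finite extensions in the relevant sense. In any case the wild degree does not depend on the chosen extension to an algebraic closure. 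So we fix an algebraic closure $\overline{M}$ with one valuation extending that of $M$, and use it for all three extensions.

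The claim then becomes $[M^t:K^t] = [M^t:L^t]\cdot[L^t:K^t]$, where all maximal tame extensions are taken inside $\overline{M}$. First we check $K^t \subseteq L^t \subseteq M^t$. A tame extension of $K$ composed with $L$ is tame over $L$, because tameness is preserved under base change; hence $K^t L \subseteq L^t$, and likewise $L^t \subseteq M^t$. Next we check that $M^t = M L^t$ and $L^t = L K^t$. The standard fact is that for a finite extension $L/K$ one has $L^t = L\cdot K^t$. This holds because $K^t$ already contains all roots of unity of order prime to $p$ and all prime-to-$p$ roots of elements of $K$, and the maximal tame extension is the compositum of the maximal unramified extension with these Kummer extensions. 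Finiteness of the degrees follows from $[L^t:K^t] \le [L:K]$, since $L^t = LK^t$. The tower law for field degrees then gives
\[
[M^t:K^t] = [M^t:L^t]\,[L^t:K^t],
\]
which is exactly $\wild_{f\circ g}(z) = \wild_g(z)\cdot\wild_f(y)$.

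The main obstacle is justifying $L^t = LK^t$ for valued fields of arbitrary finite rank that are only henselian, not necessarily complete in the rank-one sense. We would handle it via ramification theory of henselian fields. The maximal tame extension is the fixed field of the wild ramification group, which is the unique pro-$p$ Sylow subgroup of the inertia group. Under restriction $G_L \hookrightarrow G_K$, the wild ramification group of $L$ is the intersection of $G_L$ with that of $K$, because the wild inertia is normal and pro-$p$ and is defined intrinsically via the action on $\overline{M}$. Taking fixed fields gives $L^t = L K^t$.

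If the henselization issue causes trouble, we would use the equivalent description via tame henselizations stated after the definition: $\wild_f(y) = \deg(f^t_{\bar y})$. The equality then follows from multiplicativity of degrees for the composite $Y^t_{\bar z} \to X^t_{\bar z}$, factored through $Y^t_{\bar z}$. Here one needs the identification of the tame henselization of $Y$ at $\bar z$ with the base change of that of $X$, which is the same field-theoretic fact.
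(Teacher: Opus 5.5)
Your proposal is correct and is essentially the paper's argument. The paper's proof is exactly your fallback, multiplicativity of the degrees of the finite maps of tame henselizations $Z^t_{\bar z}\to Y^t_{\bar y}\to X^t_{\bar x}$, and your tower law $[M^t:K^t]=[M^t:L^t]\,[L^t:K^t]$ is the same statement read off on completed residue fields, with $L^t=LK^t$ supplying the containments and finiteness that the paper leaves implicit.

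One correction: $\widehat{k(x)}$ with the valuation of $x$ need not be henselian. Only its rank-one coarsening is henselian, and only in the analytic case; in the discrete case nothing is completed. This does not hurt you, because with your single fixed valuation on $\overline{M}$ the ramification subgroups $R_K\subseteq G_K$ and $R_L\subseteq G_L$ of that extension satisfy $R_L=R_K\cap G_L$, and taking fixed fields gives $L^t=LK^t$ with no henselianity assumption.
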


\begin{proof}
 Let~$\bar{z}$ be a geometric point over~$z$ inducing geometric points~$\bar{y}$ and~$\bar{x}$ over~$y$ and~$x$.
 Then the result follows from the multiplicativity of the degrees of the finite morphisms
 \[
  Z_{\bar{z}}^t \overset{g_{\bar{z}}}{\longrightarrow} Y_{\bar{y}}^t \overset{f_{\bar{y}}}{\longrightarrow} X_{\bar{x}}^t.
 \]
\end{proof}

\begin{lemma} \label{wd_upper_semicont}
 Let $f\colon  Y \to X$ be an étale morphism of adic spaces.
 Then the wild degree $y \mapsto \wild_f(y)$ is upper semicontinuous on~$Y$
\end{lemma}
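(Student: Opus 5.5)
Upper semicontinuity means: for every $y \in Y$ there is an open neighbourhood $V$ of $y$ with $\wild_f(y') \le \wild_f(y)$ for all $y' \in V$. I plan to prove this using the description of the wild degree as the degree of the morphism of tame henselizations $Y^t_{\bar y} \to X^t_{\bar x}$. Since the claim is local on~$Y$, the first step is to shrink $Y$ and $X$ and invoke \cite[Corollary~1.7.3~iii)]{Hu96} as in \cref{etale_finite_fibers}. This lets us assume that $f$ factors as an open immersion followed by a finite étale morphism $\Spa(B,B^+) \to \Spa(A,A^+)$ of some degree~$d$. Open immersions do not change residue fields, so by \cref{wd_multiplicative} we may assume $f$ is finite étale.

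The idea is to kill the tame part of the extension at~$y$ by an étale base change that is a neighbourhood of the tame henselization. Write $Y^t_{\bar y}$ and $X^t_{\bar x}$ as limits of étale neighbourhoods. Since $X^t_{\bar x}$ is a filtered limit of étale neighbourhoods $(U,u) \to (X,x)$ with $k(u)/k(x)$ tame, and $f$ is finite étale, $Y \times_X X^t_{\bar x}$ decomposes as a finite disjoint union of local pieces. One of these pieces is $Y^t_{\bar y}$, and it has degree $\wild_f(y)$ over $X^t_{\bar x}$. A limit argument then produces an étale neighbourhood $(U,u)$ of~$x$, with $k(u)/k(x)$ tame, such that $Y_U := Y \times_X U$ contains an open and closed piece $W \ni y_U$ that is finite étale of degree exactly $\wild_f(y)$ over~$U$. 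Here $y_U$ is the chosen point over $y$ and $u$.

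The final step transports the bound back to points near~$y$. Let $y' \in W$ with image $u'\in U$, $x' \in X$, and $y'' \in Y$. By the degree count in the finite étale map $W \to U$, the extension $\widehat{k(y')}/\widehat{k(u')}$ has degree at most $\wild_f(y)$, and so its wild degree is at most $\wild_f(y)$. Since $U\to X$ is étale, \cref{wd_multiplicative} applied to $W \to Y_U \to Y$ and to $W\to U\to X$ gives $\wild_f(y'') \le \wild_f(y') \le \wild_{U\to X}(u')\cdot \wild_f(y)$.

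It remains to make the factor $\wild_{U\to X}(u')$ equal to $1$, that is, to know that $U \to X$ is tame near $u$. I expect this to be the main obstacle. The fix I would try first is to prove upper semicontinuity by induction on $\wild_f(y)$: the case $\wild=1$ amounts to openness of the tame locus, which I would establish directly using the standard étale presentation and the fact that tame neighbourhoods can be taken to be Kummer-type covers of degree prime to~$p$ composed with unramified ones. Granting openness of the tame locus, $U \to X$ is tame on a neighbourhood of $u$, and the wild factor drops out. The image in~$Y$ of the corresponding open subset of~$W$ is open, because étale maps are open, and this image is the desired neighbourhood $V$ of~$y$.
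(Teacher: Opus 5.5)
Your argument is essentially the paper's. The paper likewise descends the finite map $Y^t_{\bar y}\to X^t_{\bar x}$ of degree $\wild_f(y)$ to a finite map of that degree over tame neighbourhoods. It then bounds the wild degree by the degree, transports the bound with \cref{wd_multiplicative}, and concludes because étale maps are open. Two comments.

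The step you single out as the main obstacle is not one. In the paper (following \cite{HueAd}) a tame neighbourhood is an étale map that is tame at \emph{every} point, i.e.\ an object of the tame site, and $X^t_{\bar x}$ is the limit over these. Your limit argument therefore directly produces $U\to X$ with $\wild_{U\to X}\equiv 1$. If you insist on neighbourhoods that are only tame at $u$, then shrinking $U$ to its tame locus is exactly the openness of the tame locus, \cite[Corollary~4.4]{HueAd}, which the paper quotes as closedness of $R_f$. So drop the ``induction on $\wild_f(y)$'': it only ever uses its base case, and applies it to $U\to X$ rather than to $f$. Also drop the Kummer-cover sketch, which as written is not a proof, and cite this result instead.

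Your preliminary reduction is not justified as stated. \cite[Corollary~1.7.3~iii)]{Hu96} only gives the local form $\Spa(B,B^+)\to\Spa(A,A^+)$ with $A\to B$ étale. A local factorization ``open immersion followed by finite étale'' does hold for analytic adic spaces (\cite[Lemma~2.2.8]{Hu96}). It fails for discretely ringed ones: rational neighbourhoods of a trivially valued point have rings of the form $A_g$, so the claim would reduce to the false statement that étale maps of schemes are Zariski-locally open pieces of finite étale maps. The reduction is also superfluous. After base change to $X^t_{\bar x}$, the connected component through a point over $y$ is finite étale by \cref{etale_henselian} (as in the proof of \cref{red_fet}), and that is all your limit argument needs.
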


\begin{proof}
 Let $\bar{y} \to Y$ be a geometric point over~$y$.
 Then
 \[
  f_{\bar{y}}^t\colon  Y_{\bar{y}}^t \to X_{\bar{y}}^t
 \]
 is finite of degree $\wild_f(y)$.
 Therefore, there exist tame neighborhoods $X' \to X$ and $Y' \to Y$ of~$\bar{y}$ and a diagram
 \[
  \begin{tikzcd}
   Y'	\ar[r,"\text{tame}"]	\ar[d,"f'"']	& Y	\ar[d,"f"]	\\
   X'	\ar[r,"\text{tame}"]					& X,
  \end{tikzcd}
 \]
 such that $f'$ is finite of degree $\wild_f(y)$.
 In particular, the wild degree of~$f'$ at any point is less or equal to $\wild_f(y)$.
 Hence, for every $y_0$ in the image of $Y' \to Y$ we have
 \[
  \wild_f(y_0) \le \wild_f(y).
 \]
 Since the image is open, the assertion follows.
\end{proof}

\begin{lemma} \label{sandwich}
 Let
 \[
  Z \overset{g}{\longrightarrow} Y \overset{f}{\longrightarrow} X
 \]
 be étale morphisms of adic spaces over $\Spa(k,k^+)$, $s \in \Spa(k,k^+)$, and $I \subseteq Z$ a subset of~$Z$ for which~$f$ and~$g$ restrict to homeomorphisms
 \[
  I \overset{g|_I}{\longrightarrow} g(I) \overset{f|_{g(I)}}{\longrightarrow} (f \circ g)(I).
 \]
 Then $\wild_{f \circ g}$ is constant on~$I$ if and only if $\wild_g$ is constant on~$I$ and $\wild_f$ is constant on $g(I)$.
\end{lemma}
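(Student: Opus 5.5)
By \cref{wd_multiplicative}, for every $z \in I$ we have
\[
 \wild_{f\circ g}(z) = \wild_g(z)\cdot \wild_f(g(z)).
\]
This immediately gives the ``if'' direction: if $\wild_g$ is constant on~$I$ and $\wild_f$ is constant on $g(I)$, the product is constant on~$I$.

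For the converse, I plan to combine multiplicativity with upper semicontinuity (\cref{wd_upper_semicont}) along the subset~$I$. Suppose $\wild_{f\circ g}$ is constant, say equal to~$N$, on~$I$. The functions $z \mapsto \wild_g(z)$ and $z \mapsto \wild_f(g(z))$ on~$I$ are upper semicontinuous for the subspace topology. For the second one this uses that $g|_I \colon I \to g(I)$ is a homeomorphism, so semicontinuity on $g(I)$ transfers to~$I$. Their product is the constant~$N$, so both take values among the divisors of~$N$. Upper semicontinuity of $\wild_g$ means each set $\{\wild_g \ge d\}\cap I$ is closed in~$I$. Since $\wild_f\circ g = N/\wild_g$ on~$I$, we also get
\[
 \{\wild_g \ge d\}\cap I = \{\wild_f\circ g \le N/d\}\cap I,
\]
which is open in~$I$ by semicontinuity of $\wild_f$. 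So $\wild_g|_I$ is locally constant on~$I$, and likewise $\wild_f|_{g(I)}$.

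The remaining step is to pass from locally constant to constant. Local constancy alone does not give constancy unless $I$ is connected, and the statement does not say so. The paper applies this lemma to segments, which are connected (they are homeomorphic to intervals of rangers), and in that setting the argument ends here. For a general subset~$I$, I would look for a property that makes the restricted functions constant outright. The natural candidate is the specialization order: on adic spaces, an upper semicontinuous function is larger at a point than at its generizations. I expect this to be the main obstacle. If the authors intend arbitrary~$I$, I would check whether the hypothesis that the maps are homeomorphisms over a fixed $s \in \Spa(k,k^+)$ is meant to force connectedness or a single specialization chain. Otherwise I would state and prove the lemma with $I$ assumed connected, which covers the intended application to segments.
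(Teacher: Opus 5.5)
Your argument is correct and is essentially the paper's. The paper also reduces to local constancy by asserting that $I$ and $g(I)$ are connected, a hypothesis missing from the statement. Near each $z \in I$ it then bounds both factors above by their values at $z$, so constancy of the product forces equality; this is the pointwise form of your clopen level-set argument. Adding connectedness is the right fix, rather than looking for a specialization-order argument. For disconnected $I$ the ``only if'' direction genuinely fails: take two points on different connected components, with the wildness of degree $p$ located in $g$ at one point and in $f$ at the other.
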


\begin{proof}
 Since~$I$ and~$g(I)$ are connected, it suffices to show the equivalence with ``constant'' replaced by ``locally constant''.
 Let~$z \in I$ be a point mapping to $y \in Y$ and $x \in X$.
 By the upper semicontinuity of the wild degree (\cref{wd_upper_semicont}) there is an open neighborhood $U \subseteq Z$ of~$z$ such that
 \begin{align*}
  \wild_{f \circ g}(u)& \le \wild_{f \circ g}(z), \quad \wild_g(u) \le \wild_g(z) \quad \forall u \in U,	\\
  &\text{and} \quad \wild_f(v) \le \wild_f(y) \quad \forall v \in g(U).
 \end{align*}
 (Note that $g(U)$ is open by \cite[Proposition~1.7.8]{Hu96}.)
 Moreover, by the multiplicativity of the wild degree (\cref{wd_multiplicative}) we have the identity
 \[
  \wild_{f \circ g}(u) = \wild_g(u) \circ \wild_f(g(u))
 \]
 for all $u \in U$.
 These two facts together tell us that for any open neighborhood $V \subseteq U$ of~$z$ the wild degree $\wild_{f \circ g}$ is constant on $V \cap I$ if and only if $\wild_g$ is constant on $V \cap I$ and $\wild_f$ is constant on $g(V \cap I)$.
 By our assumptions $g$ induces a homeomorphism $I \to g(I)$.
 In particular this implies that $g(V \cap I)$ is open in $g(I)$ and the opens of this form give a basis of neighborhoods for~$y$ in~$I$.
 This gives the desired statements about the local constancy of the wild degree.
\end{proof}

\subsection{Dimension of adic spaces}

Let $(k,k^+)$ be an affinoid field and $f\colon  Y \to X$ a morphism of adic spaces locally of ${}^+$weakly finite type over $(k,k^+)$.
The dimension of~$f$ is defined as
\[
 \dim f := \supp\{tr.c_{k(x)}k(y) \mid y \in Y,~ x = f(y)\},
\]
where $tr.c$ denotes the topological transcendence degree ($tr.c$ stands for continuous transcendence degree)
In particular, we define the dimension of~$X$ as
\[
 \dim X:= \dim(X \to \Spa(k,k^+))
\]
and the dimension of~$X$ at a point $x \in X$ as
\[
 \dim_x X := \inf_{x \in U \subset X} \dim U,
\]
where the infimum is taken over all open neighborhoods of~$x$.

The dimension as defined above is denoted $\mathrm{dim.tr}$ in \cite{Hu96} (see Definition~1.8.4 of loc.cit).
It is also possible to define the dimension in terms of chains of vertical specializations as is worked out for analytic adic spaces in \cite[\S~1.8]{Hu96}.
In particular, Corollary~1.8.7 in loc. cit. states that the two approaches to define the dimension coincide for analytic adic spaces locally of finite type over $(k,k^+)$.
If an adic space~$X$ is not topologically of finite type over $(k,k^+)$, the dimension in terms of transcendence degrees might be bigger than the dimension in terms of chains of vertical specializations.
As we are only interested in spaces topologically of finte type, it is sufficient for our purposes to work with the topological transcendence degree.

\begin{lemma} \label{dim_etale}
 Let $f\colon Y \to X$ be an étale morphism of adic spaces locally of finite type over $(k,k^+)$.
 Then $\dim f = 0$ and $\dim_{f(y)} X = \dim_y Y$ for every $y \in Y$.
\end{lemma}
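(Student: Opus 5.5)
The plan is to deduce both claims from \cref{etale_finite_fibers} together with the basic permanence properties of the topological transcendence degree. I would first show $\dim f = 0$, i.e.\ that $tr.c_{k(x)}k(y) = 0$ for every $y \in Y$ with $x = f(y)$. By \cref{etale_finite_fibers} the extension $\widehat{k(y)}/\widehat{k(x)}$ is finite. Since $k(x)$ is dense in $\widehat{k(x)}$ and $k(y)$ is dense in $\widehat{k(y)}$, and the topological transcendence degree is defined through the completions (it only depends on topological generation), $tr.c_{k(x)}k(y) = tr.c_{\widehat{k(x)}}\widehat{k(y)}$. The latter vanishes because every element of a finite extension is algebraic. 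Hence $\dim f = 0$.

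For the equality of local dimensions, I would use additivity of the topological transcendence degree in towers $k \subseteq k(x) \subseteq k(y)$. Concretely,
\[
 tr.c_k k(y) = tr.c_k k(x) + tr.c_{k(x)} k(y) = tr.c_k k(x).
\]
This additivity holds because one may pass to completions and use that a finite extension adds nothing, so it reduces to $tr.c_k\widehat{k(y)} = tr.c_k\widehat{k(x)}$ for the finite extension $\widehat{k(y)}/\widehat{k(x)}$. Thus $f$ preserves the transcendence degree of each point over $k$.

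Next take an open neighborhood $V$ of $y$. By \cite[Proposition~1.7.8]{Hu96}, $f(V)$ is open, and it contains $x$. Every point of $f(V)$ is the image of a point of $V$ with the same degree, so $\dim f(V) = \dim V$. Taking the infimum over $V$ gives $\dim_x X \le \dim_y Y$.

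For the reverse inequality, take an open neighborhood $U$ of $x$. The preimage $f^{-1}(U)$ is an open neighborhood of $y$, and each point of it has the same degree as its image in $U$. Hence $\dim f^{-1}(U) \le \dim U$, which gives $\dim_y Y \le \dim_x X$.

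The main obstacle is justifying the identification of $tr.c$ of the residue field with that of its completion, together with its additivity. If this is not available in \cite{Hu96} in the needed form, I would fall back on the local description $\Spa(B,B^+) \to \Spa(A,A^+)$ with $A \to B$ étale, as in the proof of \cref{etale_finite_fibers}. There one argues with the dense subfields $k(\supp y)$ and $k(\supp x)$, where the extension is algebraic.
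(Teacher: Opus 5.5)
Your proposal is correct and is essentially the paper's proof: finiteness of $\widehat{k(y)}/\widehat{k(x)}$ from \cref{etale_finite_fibers} gives $\dim f = 0$ and $tr.c_k k(y) = tr.c_k k(x)$, and openness of étale maps then gives $\dim f(V) = \dim V$. Your second inequality via $f^{-1}(U)$ is exactly the justification behind the paper's claim that the sets $f(V)$ are cofinal among the open neighborhoods of $x$.
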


\begin{proof}
 By \cref{etale_finite_fibers} we know that $\widehat{k(y)}/\widehat{(x)}$ is finite for all $y \in Y$ and $x = f(y)$.
 This implies $\dim f = 0$ and moreover that for all $y \in Y$, $x= f(y)$ we have
 \begin{equation} \label{compare_toptrdeg}
  tr.c_k k(x) = tr.c_k k(y).
 \end{equation}
 Since $f$ is étale, it is open (\cite[Proposition~1.7.8]{Hu96}).
 The open neighborhoods of~$x$ of the form $f(V)$ for an open neighborhood~$V$ of~$y$ are cofinal among all open neighborhoods of~$x$.
 Furthermore $\dim f(V) = \dim V$ by (\ref{compare_toptrdeg}).
 This implies that $\dim_{f(y)} X = \dim_y Y$.
\end{proof}

\subsection{Divisorial points}

For an extension $(K,K^+)/(k,k^+)$ of valued fields we define the following quantities
\[
 F_{K/k} := \trdeg_{k^\succ}(K^\succ), \qquad E_{K/k} := \dim_\Q(\lvert K^\times \rvert^\Q/\lvert k^\times \rvert^\Q).
\]
They are related by Abhyankar's inequality
\[
 tr.c_k(K) \ge F_{K/k} + E_{K/k}.
\]
This should be well known but seems not to be recorded in the literature in this form.
It follows by applying the Abhyankar inequality for the algebraic transcendence degree (see \cite[Theorem~3.4.3]{EP2005}) to a dense intermediate field $k \subseteq K_0 \subseteq K$ with $\trdeg_k(K_0) = tr.c_k(K)$ and noting that $F_{K_0/k} = F_{K/k}$ and $E_{K_0/k} = E_{K/k}$.

\begin{definition}
 An extension of valued fields $(K,K^+)/(k,k^+)$ is \emph{divisorial} if
 \[
  tr.c_k(K) = F_{K/k} < \infty.
 \]
\end{definition}

For a divisorial extension of valued fields $(K,K^+)/(k,k^+)$ Abhyankar's inequality is automatically an equality.
Moreover, both fields are of the same rank.

We now consider an adic space~$X$ that is of ${}^+$weakly finite type over an affinoid field $(k,k^+)$.
For $s \in S:= \Spa(k,k^+)$ we study the fiber~$X_s$ over~$s$.
In this setting we use the notation
\[
 F_x = F_{k(x)/k(s)} = \trdeg_{k(s)^\succ}(k(x)^\succ), \quad E_x := E_{k(x)/k(s)} = \dim_\Q(\lvert k(x)^\times \rvert^\Q/\lvert k(s)^\times\rvert^\Q).
\]

\begin{definition}
 A point $x \in X_s$ is called \emph{divisorial} if $tr.c_k k(x) = \dim_x X$ and either
 \begin{enumerate}[(i)]
  \item $s$ is nontrivial and $(k(x),k(x)^+)/(k(s),k(s)^+)$ is divisorial or
  \item $(k,k^+)$ carries the discrete topology, $s$ is the trivial valuation, and $\trdeg_{k(s)}(k(x)) = F_x + 1$.
 \end{enumerate}
 We also refer to the first kind as \emph{vertically divisorial} and the second kind as \emph{horizontally divisorial}.
\end{definition}

The intuition behind the nomenclature is the following.
On a formal model of~$X$ over~$k^+$ a vertically divisorial point $x \in X_s$ corresponds to a divisor contained in the fiber over the center of~$s$.
Horizontally divisorial points only exist if~$X$ is discretely ringed, i.e. locally of the form $X = \Spa(\cX,\cT)$ for a morphism $\cX \to \cT$ of schemes where~$\cX$ is of finite type over~$k$ and $\cT$ is of finite type over~$k^+$.
In this situation a horizontally divisorial point corresponds to a divisor of~$\cX$.

The case of curves over $(k,k^+)$ has been treated in \cite{adiccurves}.
For curves, vertically divisorial points $x \in X_s$ are characterized by $F_x = 1$ and are called \emph{divisorial points} in loc. cit.
Horizontally divisorial points are precisely the unbounded points of rank~$1$.

\begin{lemma} \label{divisorial_finite_ext}
 Let $(L,L^+)/(K,K^+)$ be a finite extension of valued fields over $(k,k^+)$.
 Then $L/k$ is divisorial if and only if $K/k$ is divisorial.
\end{lemma}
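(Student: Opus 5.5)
We compare the three quantities $tr.c_k$, $F_{\cdot/k}$ and $E_{\cdot/k}$ for $L$ and for $K$, and show that all three are unchanged under the finite extension $L/K$. The condition ``$tr.c_k(\cdot)=F_{\cdot/k}<\infty$'' then holds for $L$ if and only if it holds for $K$.

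The first step is the topological transcendence degree, where we claim $tr.c_k(L)=tr.c_k(K)$. Choose a dense subfield $k\subseteq K_0\subseteq K$ with $\trdeg_k(K_0)=tr.c_k(K)$. Since $L/K$ is finite, we can write $L=K(\alpha_1,\dots,\alpha_n)$ with each $\alpha_i$ algebraic over $K$. Approximating the coefficients of the minimal polynomials from $K_0$, or simply taking $L_0=K_0(\alpha_1,\dots,\alpha_n)$, we argue that $L_0K=L$ and that $L_0$ is dense in $L$. Indeed, $L$ is a finite-dimensional $K$-vector space with a basis in $L_0$, and $K_0$ is dense in $K$, so $K_0$-combinations of this basis are dense, provided the topology on $L$ is the product topology over $K$. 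This holds for finite extensions of valued fields when the valuation on $L$ is the relevant one; in the discrete case the statement is trivial. Since $L_0$ is algebraic over $K_0$, we get $\trdeg_k L_0=\trdeg_k K_0$, and hence $tr.c_k(L)\le tr.c_k(K)$. For the reverse inequality, take any dense $L_1\subseteq L$ with $\trdeg_k L_1=tr.c_k(L)$. The norm or trace map does not help directly here. Instead, use the fact that $tr.c$ is monotone for a subfield carrying the induced topology, via Abhyankar-type reasoning or the definition as a minimum over dense subfields. Alternatively, we could define $tr.c$ via completions, noting $\widehat L=L\otimes_K\widehat K$ is finite over $\widehat K$. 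I expect this step, that topological transcendence degree is invariant under finite extensions, to be the only point needing care. I would first try the completion argument: $tr.c_k(K)=tr.c_k(\widehat K)$ and finite extensions preserve algebraic dependence over dense subfields.

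The second step is the residue field, where $F_{L/k}=F_{K/k}$. The extension $L^\succ/K^\succ$ is finite, since its degree is bounded by $[L:K]$ via the fundamental inequality. Therefore $\trdeg_{k^\succ}L^\succ=\trdeg_{k^\succ}K^\succ$.

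The third step is the value group, where $E_{L/k}=E_{K/k}$. The index $[|L^\times|:|K^\times|]$ is finite, again bounded by $[L:K]$, so $|L^\times|^\Q=|K^\times|^\Q$ and the rational ranks over $|k^\times|^\Q$ agree.

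Combining the three steps, $tr.c_k(L)=F_{L/k}<\infty$ holds if and only if $tr.c_k(K)=F_{K/k}<\infty$, which proves the lemma. We only needed equality of $tr.c$ and of $F$, and the invariance of $E$ comes for free.
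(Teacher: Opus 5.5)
Your route is the paper's: its whole proof is the remark that $F_{\cdot/k}$ and $tr.c_k$ are invariant under finite extensions. Your treatment of $F$ via $ef\le[L:K]$ is fine, and the step on $E$ is unnecessary. The problems are in the justification you add for $tr.c_k$.

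\emph{The inequality $tr.c_k(L)\le tr.c_k(K)$.} Appealing to the product topology is wrong in general. When the valuation of $K$ has several extensions to $L$, the field $L$ does not carry the product topology over $K$. For example, take $\Q\subset\Q(\sqrt2)$ with a $7$-adic valuation: there $\Q$ is already dense. For the same reason $L\otimes_K\widehat K\ne\widehat L$ in general. The inequality itself survives, by a different argument. The closure of $L_0=K_0(\alpha_1,\dots,\alpha_n)$ in $\widehat L$ is a field containing $\widehat K$ and the $\alpha_i$, hence it contains $L$.

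\emph{The reverse inequality $tr.c_k(K)\le tr.c_k(L)$.} This is the real gap: you do not prove it. ``Monotonicity for subfields'' does not follow from the definition of $tr.c_k$ as a minimum over dense subfields, because a dense subfield of $L$ of minimal transcendence degree can meet $K$ trivially. For example, take $k=\Q_p$ with $p$ odd, $K=k(T)$ with the Gauss norm, $L=K(\sqrt T)$ and $u=\sqrt T+pT$.
\begin{itemize}
\item The field $k(u)$ is dense in $L$. The iteration $s_{n+1}=u-ps_n^2$, $s_0=0$, stays in $k[u]$ and converges to $\sqrt T$.
\item Yet $k(u)\cap K=k$. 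With $s=\sqrt T$, the involution $s\mapsto -s$ fixes $K$ and $s\mapsto -1/p-s$ fixes $k(u)$. Together they generate an infinite group of automorphisms of $k(s)$, so no element outside $k$ can be fixed by both.
\end{itemize}

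This reverse inequality is exactly what the implication ``$L/k$ divisorial $\Rightarrow$ $K/k$ divisorial'' requires. The other implication already follows from $F_{L/k}\le tr.c_k(L)\le tr.c_k(K)=F_{K/k}$ together with Abhyankar's inequality. As written, half of the equivalence therefore rests on an unproved claim. You should either invoke the invariance of $tr.c_k$ under finite extensions as a known fact, as the paper does, or supply an actual argument for $tr.c_k(K)\le tr.c_k(L)$.
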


\begin{proof}
 Both $F_{K/k}$ and $tr.c_k(K)$ are invariant under finite extensions.
\end{proof}

\begin{lemma} \label{divisorial_etale}
 Let $f\colon Y \to X$ be an étale morphism of adic spaces.
 Then $y \in Y$ is divisorial if and only if $f(y)$ is divisorial.
\end{lemma}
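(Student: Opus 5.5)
The plan is to check the three ingredients of divisoriality separately, comparing $y$ with $x := f(y)$. Let $s \in \Spa(k,k^+)$ be the image of $y$; since $f$ is a morphism over $\Spa(k,k^+)$, the point $x$ lies in the same fiber $X_s$. The condition on $s$ is therefore shared, both in case (i) ($s$ nontrivial) and in case (ii) ($(k,k^+)$ discrete and $s$ trivial).

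First, the dimension condition. By \cref{dim_etale} we have $\dim_y Y = \dim_x X$. By \cref{etale_finite_fibers} the extension $\widehat{k(y)}/\widehat{k(x)}$ is finite, so $tr.c_k k(y) = tr.c_k k(x)$; this is exactly equation (\ref{compare_toptrdeg}) from the proof of \cref{dim_etale}. Hence $tr.c_k k(y) = \dim_y Y$ if and only if $tr.c_k k(x) = \dim_x X$.

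Second, the valuation-theoretic condition in case (i). The topological transcendence degree, the specialization field $k(\cdot)^\succ$ and the value group are all unchanged when we pass to the completion. So $k(y)/k(s)$ is divisorial if and only if $\widehat{k(y)}/k(s)$ is, and likewise for $x$. Since $\widehat{k(y)}/\widehat{k(x)}$ is a finite extension of valued fields, \cref{divisorial_finite_ext} applied over $(k(s),k(s)^+)$ gives the equivalence.

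Case (ii) is the main point needing care. Here the topology is discrete and $s$ is trivial, so the condition is $\trdeg_{k(s)} k(x) = F_x + 1$. I would avoid completions here and use the algebraic description from the proof of \cref{etale_finite_fibers}. Locally $f$ is $\Spa(B,B^+) \to \Spa(A,A^+)$ with $A \to B$ étale. In the discrete setting $k(y)$ is the residue field $k(\supp y)$, which is finite separable over $k(x) = k(\supp x)$. Consequently $\trdeg_k$ agrees for $k(y)$ and $k(x)$. For the specialization fields, $k(y)^\succ$ is algebraic over $k(x)^\succ$ because residue extensions of algebraic extensions are algebraic, so $F_y = F_x$. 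This yields the equivalence in case (ii) as well.

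The only real subtlety is making sure the invariants are compared over the same base point $s$ and are insensitive to completion. Both follow from the density of $k(\supp y)$ in $k(y)$ noted in the proof of \cref{etale_finite_fibers}.
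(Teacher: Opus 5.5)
Your proof is correct and follows the paper's argument: finiteness of $\widehat{k(y)}/\widehat{k(x)}$ from \cref{etale_finite_fibers}, equality of local dimensions from \cref{dim_etale}, and invariance of the divisoriality invariants under finite extensions as in \cref{divisorial_finite_ext}. You only make explicit two points the paper leaves implicit: that $tr.c$, $F$ and the value group do not change under completion, and the horizontally divisorial case (ii), where the same invariance of $\trdeg$ and $F$ under finite extensions applies.
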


\begin{proof}
 By \cref{etale_finite_fibers} the completed residue field extensions $\widehat{k(y)}/\widehat{k(x)}$ are finite for all $y \in Y$ and $x = f(y)$.
 Therefore, the assertion follows from \cref{dim_etale} and \cref{divisorial_finite_ext}.
\end{proof}

\subsection{Segments}

For a smooth curve~$X$ over~$k$ we consider the \emph{sheaf of values}
\[
 \Gamma_X = \cO_X^\times/(\cO_X^+)^\times.
\]
If~$k$ is algebraically closed, it is a useful invariant to describe the metric structure of~$X$.
In the general case, however, it is better to work with the \emph{geometric sheaf of values} $\overline{\Gamma}_X$.
It is constructed by noting that the sheaf of values on the base change $X_{\bar{k}}$ of~$X$ to an algebraic closure of~$k$ descends to a sheaf on~$X$.
This is the sheaf $\overline{\Gamma}_X$ (see \cite[\S~4.3.2]{adiccurves}).

We fix a point $s \in S:=\Spa(k,k^+)$ and set
\[
 \overline{\Gamma} := \lvert k(s)^\times \rvert^\Q.
\]
By \cite[Theorem~4.7.9]{adiccurves}, the fiber~$X_s$ together with~$\overline{\Gamma}_X$ forms a $\overline{\Gamma}$-graph with pinched leaves.
Its vertices are the divisorial points, its edge points are the cut and infinitesimal points and its pinched leaves are the unibranch and classical points.

In contrast to the Berkovich world, the edges of this graph are not parametrized by the real numbers but by the set of rangers~$\cR_\Gamma$.
This is an ordered set containing~$\Gamma$.
Equipped with the topology generated by the intervals of the form $[\gamma,\infty]$ and $[-\infty,\gamma]$ for $\gamma \in \Gamma$, $\cR_\Gamma$ is connected, compact, and contains~$\Gamma$ as a dense subset.
The rangers are explicitly constructed as a set of equivalence classes of pairs $(\Gamma',\gamma')$, where~$\Gamma'$ is a totally ordered abelian group containing~$\Gamma$ and~$\gamma'$ is an element of~$\Gamma'$.
The equivalence is induced by inclusions of totally ordered abelian groups~$\Gamma'$ identifying the respective elements~$\gamma'$.

Now let us get back to the graph $(X_s,\overline{\Gamma})$.
It makes sense to speak of subsegments of~$X_s$.
A large part of this article will be concerned with proving that the wild degree of an étale morphism $Y \to X$ stays constant when we walk along a small segment of~$Y_s$.
However, for technical reasons we want to treat the fibers over varying $s \in S$ uniformly.
The technical framework for this is provided by the theory of \emph{valuative graphs} developed in \cite[Chapter~5]{adiccurves}.
A valuative graph~$Z$ is glued from rational subsets of the space $\Spa(\overline{\Gamma}[T]_\star,\overline{\Gamma}^+[T]_\star)$ of equivalence classes of continuous valuations of the augmented monoid $\overline{\Gamma}[T]_\star$ whose value is less or equal to~$1$ on $\overline{\Gamma}^+[T]$.
It is provided with the structure sheaf~$\cP_Z$ of piecewise linear functions and its subsheaf~$\cP_Z^+$ of integral piecewise linear functions.

We can enhance a finite subgraph~$Z_s$ of~$X_s$ whose endpoints are divisorial or classical to a quasi-compact valuative graph~$Z$ over $\Spa(k(s),k(s)^+)$ whose fiber over~$s$ recovers~$T_s$.
More precisely, we consider on the base change~$\overline{X}$ of~$X$ to the algebraic closure of~$k$, the sheaf of piecewise monomial functions $\cP_{\overline{X}} := \cO_{\overline{X}}/\cO_{\overline{X}}^{+\times}$ and its subsheaf~$\cP_{\overline{X}}^+ = \cO_{\overline{X}}^+/\cO_{\overline{X}}^{+\times}$ of integral piecewise monomial functions
Both~$\cP_{\overline{X}}$ and~$\cP_{\overline{X}}^+$ descend to sheaves~$\overline{\cP}_X$ and~$\overline{\cP}_X^+$ on~$X$.
Then $X^{\overline{\times}} := (X,\overline{\cP}_X,\overline{\cP}_X^+)$ is a monoided valuation space in the sense of~\cite[\S~5.3.1]{adiccurves}.
We can now study subspaces~$Z$ of $X^{\overline{\times}}$ that are valuative graphs.
Their fibers over some $s \in S$ will be finite subgraphs of~$X_s$ with divisorial or classical endpoints.
The structure sheaf~$\cP_Z$ of piecewise linear functions on~$Z$ recovers the restriction of the sheaf~$\overline{\Gamma}_X$ to~$Z$: $\overline{\Gamma}_X|_Z = \cP_Z^\times$.

We say that~$Z$ is a valuative graph \emph{over $s \in S$} if it is glued from intervals of the form $[a,b]_s$ (with always the same~$s$).
Unfortunately, this does not suffice to treat segments at unibranch points.
Recall from \cite[5.3.3]{adiccurves} the definition of the ray $[x,\infty]_s$ for a unibranch point $x \in \P^1_{\bar{k}}$.
It is of the form $\{x\} \cup \bigcup_{i \in I} [a_i,\infty]_s$, where~$I$ is the set of discs containing~$x$ with Gauß point~$a_i$.
By local uniformization \cite[Theorem~4.6.5]{adiccurves} and invariance under $h$-isomorphisms \cite[Proposition~5.3.6]{adiccurves}, we conclude that for a unibranch point~$x$ on a general smooth curve over~$(\bar{k},\bar{k}^+)$ all segments emanating from~$x$ are locally at~$x$ of the form $\{x\} \cup \bigcup_{i \ge j} [a_i,a_j]_s$ for some~$j \in I$.
Taking the quotient of the curve by the Galois action is well behaved by \cite[Lemma~4.3.11]{adiccurves} and thus segments at unibranch points of smooth curves over $(k,k^+)$ are of the same form.

\begin{definition}
 An \emph{extended valuative graph} over $s \in S$ in a smooth curve~$X$ over~$(k,k^+)$ is a monoided valuation subspace $Z \subseteq X^{\overline{\times}}$ that is locally isomorphic to intervals $[a,b]_s \subseteq \P^1_{\bar{k}}$, where~$b$ is divisorial and~$a$ is either divisorial, classical or unibranch.
\end{definition}

A \emph{constructible segment} in an extended valuative graph~$Z$ is an element of the boolean algebra generated by all intervals $[a,b]_s$ (for the fixed~$s$) as in the definition above.
Explicitly a constructible segment of~$Z$ can always be obtained by taking a quasi-compact open extended valuative subgraph $Z' \subseteq Z$ over~$s$ and removing for some of the divisorial endpoints $p \in Z'$ the subspaces $\Spa(k(p),k(p)^+)$.
Consequently, the endpoints of a constructible segment of~$Z$ are either divisorial, classical, infinitesimal, or unibranch.

\begin{definition}
 Let~$X$ be a smooth curve over an affinoid field~$(k,k^+)$.
 A \emph{constructible segment} of~$X$ is a constructible segment of a qcqs extended valuative subgraph of $(X,\cP_X,\cP_X^+)$.
\end{definition}


\begin{lemma} \label{segment_divisorial}
 Let $I$ be a constructible segment of a smooth curve~$X$ over $S= \Spa(k,k^+)$ and let $s$ be a point in the image of~$I$ in~$S$.
 Then~$I$ contains a divisorial point over~$s$.
\end{lemma}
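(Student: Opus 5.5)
The plan is to reduce to the local model of an interval $[a,b]_s$ in $\P^1_{\bar k}$ and to use the explicit description of constructible segments given just before the statement.

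First, recall that a constructible segment $I$ of $X$ is a constructible segment of a qcqs extended valuative subgraph $Z$. So $I$ lies in the boolean algebra generated by intervals $[a,b]_s$ in which $b$ is divisorial. Concretely, $I$ is obtained from a quasi-compact open extended valuative subgraph $Z'\subseteq Z$ over $s$ by removing, for some divisorial endpoints $p$, the subspaces $\Spa(k(p),k(p)^+)$.

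Here $s$ is in the image of $I$, so the fibre $I_s$ is nonempty. Fix a local chart of $Z'$ meeting $I$ over $s$, isomorphic to some $[a,b]_s$ with $b$ divisorial.

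We now distinguish two cases for this chart.
\begin{enumerate}[(i)]
\item If the endpoint $b$ has not been removed, then $b$ itself lies in $I$. It is a point over $s$ and divisorial by assumption, so we are done.
\item If $b$ has been removed, only its fibre $\Spa(k(b),k(b)^+)$, i.e.\ the infinitesimal points specializing from $b$ along the segment, is deleted. The part of the chart surviving in $I$ over $s$ is then a nondegenerate interval in the ranger parametrisation $\cR_{\overline{\Gamma}}$, with endpoints of the allowed types: divisorial, classical, infinitesimal or unibranch.
\end{enumerate}

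In case (ii) the argument runs as follows.
\begin{itemize}
\item By \cite[Theorem~4.7.9]{adiccurves}, the vertices of the $\overline{\Gamma}$-graph $X_s$ are exactly the divisorial points.
\item Points of an edge correspond to rangers, and those corresponding to elements of $\overline{\Gamma}=|k(s)^\times|^{\Q}$ are divisorial. On $\P^1_{\bar k}$ these are the Gau\ss{} points of discs of radius in $\overline{\Gamma}$, for which $F_x=1$.
\item Since $\overline{\Gamma}$ is dense in $\cR_{\overline{\Gamma}}$, any nondegenerate ranger interval contains an element of $\overline{\Gamma}$. That element gives a divisorial point in the interior of $I_s$.
\end{itemize}

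For unibranch endpoints we use the local form $\{x\}\cup\bigcup_{i\ge j}[a_i,a_j]_s$. It already contains the divisorial Gau\ss{} points $a_i$ lying over $s$.

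Finally, divisoriality is insensitive to passing between $X_{\bar k}$ and $X$ by \cref{divisorial_finite_ext} together with \cref{divisorial_etale}. So points that are divisorial in the model chart stay divisorial in $X$.

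The main obstacle is to see that after removing the infinitesimal neighbourhoods of the endpoints, the remaining part over $s$ is still a nondegenerate ranger interval. The danger is that it might collapse to a single infinitesimal point. I would rule this out as follows. The removed set $\Spa(k(p),k(p)^+)$ consists of $p$ together with its vertical specializations, so it meets the segment only in $p$ and the single infinitesimal point adjacent to $p$ along the segment. If, on the other hand, $I_s$ were just one infinitesimal point, it would not be in the boolean algebra generated by the intervals over $s$, because it cannot be cut out by finitely many intervals with divisorial or classical endpoints.
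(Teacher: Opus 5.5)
Your strategy is the paper's: pass to the local ranger model, identify divisorial points over $s$ with principal rangers, and use density of $\overline{\Gamma}$ in $\cR_{\overline{\Gamma}}$. However, two steps are wrong as written.

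\textbf{The removed set is misdescribed.} You have $\Spa(k(p),k(p)^+)$ backwards. Its points correspond to the (continuous) coarsenings of $k(p)^+$, i.e.\ $p$ and its vertical \emph{generalizations}, not its specializations. So in the fibre over the base point only $p$ itself is deleted. The adjacent infinitesimal point survives and becomes the new endpoint, which is why the paper says endpoints of constructible segments may be infinitesimal. The justification in your last paragraph is therefore incorrect. Its conclusion still holds, and is best obtained as in the paper. Over the base point, a nonempty element of the boolean algebra is a finite union of intervals $[a,b]$ with $a\le b$, or $[a,b)$, $(a,b]$, $(a,b)$ with $a<b$, plus $[0,b]$, $[0,b)$, all with divisorial $a,b$. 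Each of these contains a principal ranger because $\overline{\Gamma}$ is divisible; for instance $\sqrt{ab}\in(a,b)$.

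\textbf{The point $s$ need not be the base point.} A constructible segment is built from intervals $[a,b]_{s_0}$ over a fixed base point $s_0$. The lemma allows any $s$ in the image of $I$, possibly a proper generalization of $s_0$. Your charts are written as $[a,b]_s$, which silently assumes $s=s_0$. Then case (i) breaks: $b$ lies over $s_0$, not over $s$, and its generalization $b_s$ need not lie in $I$.

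For example, take $Z'=[a,p]_{s_0}$ with $a_s<p_s$ and remove $\Spa(k(p),k(p)^+)$. Use a chart $[a,b]_{s_0}$ with $a<b<p$ but $b_s=p_s$. This chart meets $I$ over $s$ at $a_s$, and $b\in I$ is not removed, yet $b_s=p_s$ is removed.

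You need the paper's first step: restrict to the part of $I$ over $\Spa(k(s),k(s)^+)$. There $[a,b]_{s_0}$ becomes $[a_s,b_s]_s$ (as in the proof of \cref{lift_segment}) and $\Spa(k(p),k(p)^+)$ becomes $\Spa(k(p_s),k(p_s)^+)$. So $I$ turns into a constructible segment over $s$, and your argument then goes through.

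A minor point: $X_{\bar{k}}\to X$ is not étale, so \cref{divisorial_etale} is not the right reference. The identification of vertices with divisorial points in \cite[Theorem~4.7.9]{adiccurves} already suffices.
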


\begin{proof}
 Let $x \in I$ be a point over~$s$.
 We reduce to the case where~$s$ is the closed point of~$S$ and~$I$ is isomorphic to a constructible subspace of the unit interval $\I_S = [0,1]_s = \Spa(\overline{\Gamma}[T]_\star,\overline{\Gamma}^+[T]_\star)$.
 These are finite unions of intervals of the form $[a,b]_s$ for divisorial rangers $a \le b \le 1$, $[a,b)_s$, $(a,b]_s$, and $(a,b)_s$, for divisorial rangers $a < b \le 1$, and $[0,b]_s$, $[0,b)_s$ for a divisorial ranger $b \le 1$.
 The divisorial points over~$s$ in these intervals correspond to divisorial rangers between $a$ and~$b$ (or $0$ and~$b$).
 They exist since~$\overline{\Gamma}$ is dense in~$\cR_{\Gamma}$.
%
%
\end{proof}
%
For a point $x \in X_s$, we call $I \subseteq X$ a \emph{segment at}~$x$ if~$I$ is a constructible segment over~$s$ containing~$x$.
If~$x$ is a divisorial point, the valuative graph $\Spa(\overline{k(x)},\overline{k(x)}^+)$ is a segment at~$x$.
For all other points~$x$, segments at~$x$ have to contain other points over~$s$ apart from~$x$.


\begin{proposition} \label{segment_homeomorphism}
 Let $f\colon  Y \to X$ be an étale morphism of smooth curves over $(k,k^+)$ and $s \in S = \Spa(k,k^+)$ a point.
 \begin{enumerate}[(i)]
  \item Let~$I$ be a constructible segment at a point $y \in Y_s$.
 	Then there is a constructible subsegment $J \subseteq I$ at~$y$ such that
 	\[
 	 f|_J \colon  J \longrightarrow f(J)
	\]
 	is a homeomorphism.
  \item If $I \subseteq X$ is a constructible segment over~$s$, the preimage $f^{-1}(I)$ is a constructible segment of~$Y$.
 \end{enumerate}
\end{proposition}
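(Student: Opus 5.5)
We reduce both parts to the local structure of étale maps of curves near a point, using the graph description of fibers from \cite{adiccurves}. By \cite[Corollary~1.7.3]{Hu96}, $f$ is locally around $y$ of the form $\Spa(B,B^+)\to\Spa(A,A^+)$ with $A\to B$ étale. Shrinking further, we may assume $f$ factors as an open immersion followed by a finite étale map. Since open immersions preserve constructible segments (they restrict on valuative graphs to open monoided subspaces), it suffices to treat the case where $f$ is finite étale of some degree $d$.

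The key point for (i) is that $f$ respects the sheaves defining the graph structure. It induces a morphism of monoided valuation spaces $Y^{\overline{\times}}\to X^{\overline{\times}}$, because pullback of functions maps $\overline{\cP}_X,\overline{\cP}_X^+$ to $\overline{\cP}_Y,\overline{\cP}_Y^+$. Since $\overline{\Gamma}_Y/f^*\overline{\Gamma}_X$ is torsion (the residue field extensions are finite by \cref{etale_finite_fibers}), $f$ is \emph{piecewise linear with rational slopes} along segments.

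On the base change to $\bar k$ we use local uniformization \cite[Theorem~4.6.5]{adiccurves} to describe $y$ and a neighbourhood of it inside $\P^1$.
\begin{itemize}
\item If $y$ is classical, unibranch or infinitesimal, there is a unique branch (tangent direction) of $Y_s$ at $y$ containing $I\setminus\{y\}$ near $y$.
\item If $y$ is divisorial, $I$ may already be the trivial segment, or it may contain finitely many branches.
\end{itemize}
For each branch we choose an initial subsegment small enough that it contains no other preimage of $f(y)$ and no branching (type-two) points of $f^{-1}(f(\text{branch}))$ except $y$. This is possible because $f$ is finite, so fibers are finite and the non-injectivity locus on a segment is constructible. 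On such a subsegment, $f$ is injective and maps intervals $[a,b]_s$ monotonically onto intervals, since the local degree is constant on small enough segments. Taking $J$ to be the union of these pieces, together with $y$, gives a continuous bijection $J\to f(J)$. It is a homeomorphism because $J$ is quasi-compact (a finite union of intervals $[a,b]_s$ in the compact topology on rangers) and the relevant target is Hausdorff-like along segments. Alternatively, $f|_J$ is monotone and order-preserving in ranger coordinates, hence open onto its image.

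Part (ii) is proved by descending to $\bar k$ and using finiteness. Write $I$ locally as a boolean combination of intervals $[a,b]_s$. Since $f$ is finite étale, the preimage of an interval $[a,b]_s$ is a finite union of subgraphs whose endpoints lie in the finite fibers $f^{-1}(a)$ and $f^{-1}(b)$. These fibers are again divisorial, classical or unibranch by \cref{divisorial_etale}, and the analogous statements hold for the other types. Each component maps onto $[a,b]_s$ as a finite piecewise linear map, so after subdividing at finitely many divisorial points it is a finite union of intervals. Preimage commutes with boolean operations, so $f^{-1}(I)$ is constructible. In the general étale (non-finite) case, one covers $I$ by finitely many quasi-compact pieces over which $f$ factors as above. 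This uses quasi-compactness of $I$ and its preimage, which holds after intersecting with qcqs opens.

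The main obstacle is the claim in (i) that a sufficiently small initial piece of a branch maps injectively, with a continuous inverse, onto an interval. The difficulty is at infinitesimal and unibranch points, where no ``small closed disc'' argument with real radii is available. I would first try to prove it by pulling back a coordinate on the target interval. This gives a piecewise linear function on the branch, with slope the local degree $\ne 0$, and the aim is to show it is strictly monotone near $y$ using \cite[Proposition~5.3.6]{adiccurves}.
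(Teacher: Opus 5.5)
There is a genuine gap: the step that carries all the content is asserted, not proved. In (i) you choose an initial piece of each branch with ``no branching points'', justified by ``the non-injectivity locus on a segment is constructible'' and ``the local degree is constant on small enough segments''. In (ii) you assert that the preimage of $[a,b]_s$ under a finite étale map is a finite union of subgraphs with endpoints in $f^{-1}(a)\cup f^{-1}(b)$, each mapping piecewise linearly onto $[a,b]_s$. These claims are essentially the proposition itself, and your last paragraph concedes that injectivity near infinitesimal and unibranch points is still open. Your justifications do not supply the missing step:
\begin{itemize}
\item Torsion of $\overline{\Gamma}_Y/f^*\overline{\Gamma}_X$ is a pointwise statement. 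It says nothing about $f$ having finitely many linear pieces near $y$, and that is precisely what is unclear when divisorial points accumulate at $y$. This happens at infinitesimal $y$, and at unibranch $y$, where $I=\{y\}\cup\bigcup_i[a_i,b]_s$ and the $a_i$ accumulate at $y$.
\item If ``local degree'' means $[\widehat{k(y)}:\widehat{k(x)}]$, it is not constant on small segments. For $z\mapsto z^2$ on the annulus $\{|z|=1\}$ with residue characteristic $\neq 2$, it is $2$ at the Gauss point and $1$ at the rank-one points strictly between $1$ and the Gauss point. If it means the slope along the branch, its constancy near $y$ is again the missing finiteness statement.
\item The homeomorphism step also fails as written. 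Valuative graphs are not Hausdorff, since a divisorial point specializes to its infinitesimal points, so a continuous bijection from a quasi-compact segment need not be open. Your monotonicity alternative presupposes the step you have not proved.
\end{itemize}

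The paper does not argue from scratch; it imports the structure theory of \cite{adiccurves}. For $y$ not unibranch, both (i) and (ii) are read off from \cite[Theorem~5.3.12]{adiccurves}, which describes étale morphisms restricted to valuative graphs. For unibranch $y$, it uses local uniformization \cite[Theorem~4.6.5]{adiccurves} to reduce to h-discs. It then writes $I=\{y\}\cup\bigcup_i[a_i,b]_s$ with $b$ the Gauss point of a rational disc, and applies \cite[Proposition~5.3.8]{adiccurves} to each interval $[a_i,b]_s$ with divisorial endpoints. Part (ii) is handled by the same splitting of $I$ into a valuative graph plus such rays. Your argument needs an input of this kind, namely a skeleton-type finiteness for $f$ near $y$, before ``choose a small initial piece'' makes sense.

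Separately, in (ii) your passage from finite étale to arbitrary étale maps assumes that $f^{-1}(I)$ is covered by finitely many of the local pieces, that is, that it is quasi-compact. This is not automatic when $f$ is not quasi-compact.
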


\begin{proof}
 (i). If~$y$ is not unibranch, this follows from \cite[Theorem~5.3.12]{adiccurves}.
 In case~$y$ is unibranch, we use local uniformization \cite[Theorem~4.6.5]{adiccurves} to reduce to the case where~$X$ and~$Y$ are $h$-discs.
 We may assume that~$I$ is of the form $\{y\} \cup \bigcup_{i \in I} [a_i,b]_s$, where~$b$ is the Gauß point of a rational disc~$\D_b$ over~$s$ containing~$x$ and~$I$ is the set of discs containing~$x$ and contained in~$\D_b$.
 Then the assertion follows by applying \cite[Proposition~5.3.8]{adiccurves} to the intervals $[a_i,b]_s$.

 (ii). If~$I$ does not contain unibranch points, the assertion again follows from \cite[Theorem~5.3.12]{adiccurves}.
 Otherwise we decompose~$I$ into a valuative graph and intervals of the form $[x,b]_s \subseteq [x,\infty]_s \subseteq \P^1_{\bar{k}}$ for a unibranch point~$x$ and a divisorial point~$b$.
 Then we can apply loc.cit. to the valuative graph and \cite[Proposition~5.3.8]{adiccurves} to the subintervals~$[a_i,b]_s$ of $[x,b]_s$ for~$a_i$ divisorial.
\end{proof}

Combining \cref{segment_homeomorphism} with \cref{sandwich} we obtain a very useful tool in the toolbox for our main theorem.

\subsection{The wild locus}

The overall goal of this article is to investigate the wild locus of an étale morphism of adic spaces.

\begin{definition}
 The \emph{wild locus} of an étale morphism $f \colon Y \to X$ of adic spaces is the set of all $y \in Y$ with image $x \in X$ such that the completed residue field extension $\widehat{k(y)}/\widehat{k(x)}$ is wildly ramified.
 We denote the wild locus by $R_f$.
\end{definition}

We know by \cite[Corollary~4.4]{HueAd} that the wild locus is a closed subset of~$Y$.
But this does not predict anything about the size or shape of the wild locus.
We expect it to be rather big in the sense that it should not contain isolated closed points.
In the presence of a nice formal model we have more control on the wild locus as the following example shows.

\begin{example}
 Let $(K,K^+)$ be a discrete valuation field and $X/K$ a smooth rigid space with a semistable formal model $\cX/K^+$.
 By Abhyankar's lemma, a finite étale covering $f \colon Y \to X$ is tame if and only if it is tame with respect to the valuations~$v_D$ corresponding to the irreducible components of the special fiber of~$\cX$.
 We can rephrase this in the following way:
 If the wild locus~$R_f$ is non-empty, it contains one of the divisorial points~$v_D$ as above.
\end{example}

If we do not have a nice formal model for our space (and we do not want to resort to resolutions of singularities, which is only conjecturally true, in order to construct such a model), it is not so easy to find a divisorial point in the wild locus.
For curves our strategy will be to start with any point in~$R_f$ and walk in a suitable direction along a segment.
We will show in sections~\ref{sect:alg_closed_ramification_locus} and~\ref{sect:wild_degree_general} that the morphism~$f$ stays wild on a small enough subsegment and this subsegment needs to contain a divisorial point.
In higher dimension we use fibrations in curves in order to reduce to the curve case (see section~\ref{sect:wild_higher_dim}).

\section{Passing to finite étale morphisms}

In this section we explain why it suffices to study the wild degree of \emph{finite} étale morphisms.

Remember from \cite[\S 6]{HueAd} that a Huber pair $(A,A^+)$ is called \emph{local} if~$A$ is local, $A^+$ is the preimage of a valuation ring of the residue class field of~$A$ (this implies that~$A^+$ is local as well), and the maximal ideal of~$A^+$ is open and bounded.
By \cite[Lemma~6.8]{HueAd}, an adic space is the spectrum of a local Huber pair if and only if it has a unique closed point that every other point specializes to.
By taking the intersection of all affinoid neighborhoods of a point $x$ in an adic space~$X$, we obtain a local adic space $X_x$ that we call the \emph{localization} of~$X$ at~$x$ (see \cite{HueAd}, \S~6.3).

A Huber pair $(A,A^+)$ is \emph{henselian} if it is local and~$A^+$ is henselian.
An adic space is called henselian if it is the adic spectrum of a henselian Huber pair.
This automatically implies that also~$A$ is henselian (see \cite[Proposition~6.10]{HueAd}).
For a point $x$ of an adic space~$X$ we obtain a henselian adic space $X_x^h$ by taking the limit over all pointed étale morphism $(Y,y) \to (X,x)$ such that the completed residue field extension $\widehat{k(y)}/\widehat{k(x)}$ is trivial (\cite[\S~6.3]{HueAd}).
We call $X_x^h$ the \emph{henselization} of~$X$ at~$x$.

\begin{lemma} \label{etale_henselian}
 Let $(A,A^+)$ be henselian and $f\colon  X \to \Spa(A,A^+)$ an étale and surjective morphism from a connected adic space~$X$.
 Then~$f$ is finite.
\end{lemma}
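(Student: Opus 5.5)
The plan is to show that $X$ is a finite adic space over $\Spa(A,A^+)$ by finding a connected, finite étale, local piece $W$ around a point over the closed point, and then using connectedness of $X$ to get $X=W$.

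\emph{Producing $W$.} Let $x_0$ be the unique closed point of $S:=\Spa(A,A^+)$; by \cite[Lemma~6.8]{HueAd} every point of $S$ specializes to $x_0$. Since $f$ is surjective, choose $y\in X$ with $f(y)=x_0$. By \cite[Corollary~1.7.3~iii)]{Hu96}, $f$ is, near $y$, of the form $\Spa(B,B^+)\to\Spa(A,A^+)$ with $A\to B$ étale in the algebraic sense. Shrinking, we may take $B$ to be standard étale, $B=(A[T]/(g))_h$. The support of $x_0$ is the maximal ideal $\m_A$ (this is where locality of $(A,A^+)$ enters), so $\supp y$ is a prime of $B$ over $\m_A$. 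Since $A$ is henselian \cite[Proposition~6.10]{HueAd}, the finite $A$-algebra $A[T]/(g)$ splits as $C\times D$ with $C$ local and finite étale over $A$ and $\supp y\in\Spec C$; moreover $h$ is a unit on $C$, so $\Spec C$ is an open and closed piece of $\Spec B$. Let $C^+$ be the integral closure of $A^+$ in $C$. Then $\Spa(C,C^+)$ is an open and closed subspace of $\Spa(B,B^+)$ containing $y$, and it is finite étale over $S$. Since $A^+$ is henselian, $C^+$ is a finite product of henselian local rings. Let $W$ be the connected component of $\Spa(C,C^+)$ containing $y$. Then $W$ is local (indeed henselian) with closed point $y$, finite étale over $S$, and open in $X$.

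\emph{Showing $W=X$.} It remains to show that $W$ is closed in $X$; connectedness of $X$ then gives $X=W$, which is finite over $S$. I expect this to be the main obstacle, since $X$ is not assumed separated or quasi-compact. My approach is to show that the image of $W$ in $X$ is closed under specialization, using two specialization arguments.
\begin{itemize}
\item \emph{Vertical specializations.} If $z\in\overline{W}$ specializes $w\in W$ vertically, these are valuations with the same support, so $z$ corresponds to a valuation ring of $k(w)$ contained in $k(w)^+$. Since $W\to S$ is finite, hence closed, and $W\to X\to S$ is compatible, $z$ lies in $W$.
\item \emph{Horizontal specializations.} These are handled by the same finiteness argument applied to supports.
\end{itemize}
Alternatively, one can argue that every point $z\in X$ specializes to a point over $x_0$, since $f(z)\rightsquigarrow x_0$ and étale maps lift specializations by openness \cite[Proposition~1.7.8]{Hu96}. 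Running the construction of $W$ at each such point yields a finite étale cover of $X$ by open, local, finite-over-$S$ pieces $W_i$. Two such pieces either coincide or are disjoint, because the closed point of a local space lies in every open subset that meets it. Hence each $W_i$ is open and closed in $X$, and connectedness gives $X=W$.
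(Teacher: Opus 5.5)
Your construction of the local, finite étale piece $W$ is essentially the paper's first step, but one identification needs the paper's extra argument. The open and closed piece of $\Spa(B,B^+)$ cut out by the idempotent of $C$ is $\Spa(C,C'^+)$, where $C'^+$ is the image of $B^+$. This can be strictly larger than the integral closure $C^+$ of $A^+$. It is an open subset of the local space $\Spa(C,C^+)$ containing that space's closed point $y$ (the unique point over $x_0$), hence equal to it. This is exactly how the paper obtains $\Spa(B,B^+)=\Spa(B,B_0^+)$. Your reduction to a standard étale $B$ by Zariski shrinking does not correspond to passing to an affinoid open, and it is unnecessary: an étale algebra over the henselian local ring $A$ splits directly as a finite étale factor times a factor with no prime over $\m_A$.

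The genuine gap is the step $W=X$, and none of your arguments for it is valid.
\begin{itemize}
\item That $W\to S$ is finite, hence closed, says nothing about $W$ being closed in $X$.
\item Openness of étale maps lifts generalizations, not specializations.
\item An open subset of a local space that merely meets it need not contain the closed point. Only an open subset \emph{containing} the closed point is the whole space.
\end{itemize}
In fact the statement cannot be proved as written. Take $S=\Spa(k,k^+)$ with $k$ discretely topologized and $k^+$ a henselian valuation ring of rank one, so that $S=\{\eta,x_0\}$ with $\{\eta\}$ open. Glue two copies of $S$ along $\{\eta\}$. The resulting $X$ is connected, étale and surjective over $S$, but it is not separated, hence not finite. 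In this example $W$ is one copy of $S$, open but not closed, and the two pieces through the two points over $x_0$ overlap without coinciding.

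The missing hypothesis is separatedness of $X$. The paper uses it tacitly when it asserts that $\Spa(B,B^+)\cap\Spa(C,C^+)=\Spa(B,B^+)\times_X\Spa(C,C^+)$ is finite étale over $\Spa(A,A^+)$. For $X$ separated and étale over $S$, this intersection is an open and closed subspace of the finite étale space $\Spa(B,B^+)\times_S\Spa(C,C^+)$. Since it misses the closed point, it is empty.

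Once you assume separatedness, your own route closes quickly. Since $W\to S$ is finite and $X\to S$ is separated, $W\to X$ is proper. So $W$ is closed as well as open in $X$, and connectedness gives $X=W$ without any lifting of specializations.
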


\begin{proof}
 Let $x \in X$ be a preimage of the closed point of $\Spa(A,A^+)$.
 Choose a connected affinoid open neighborhood $U= \Spa(B,B^+)$ of~$x$ in~$X$.
 By \cite[Corollary~1.7.3]{Hu96}, we may assume that $(B,B^+)$ is étale over $(A,A^+)$ in the algebraic sense, i.e. $A \to B$ is étale as a map of rings.
 As~$A$ is henselian, this implies that $B/A$ is finite and~$B$ is local (henselian).
 Denote by $B_0^+$ the integral closure of~$A^+$ in~$B$.
 Then $(B,B_0^+)$ is henselian being integral over $(A,A^+)$ (see \cite[Lemma~6.14]{HueAd}).
 We obtain the following diagram
 \[
  \begin{tikzcd}[column sep=tiny]
   \Spa(B,B^+)	\ar[rr,open]	\ar[dr,"\text{étale}"']	&				& \Spa(B,B_0^+)	\ar[dl,"\text{finite étale}"]	\\
														& \Spa(A,A^+).
  \end{tikzcd}
 \]
 Since $(B,B_0^+)$ is local and $\Spa(B,B_0^+) \to \Spa(A,A^+)$ is finite étale, the closed point of $\Spa(B,B_0^+)$ is the unique preimage of the closed point of $\Spa(A,A^+)$.
 But this point needs to coincide with $x \in Spa(B,B^+)$, whence $\Spa(B,B^+) = \Spa(B,B_0^+)$.

 If we do this construction for a different point $y \in X$ in the preimage of the closed point of $\Spa(A,A^+)$, we obtain another finite étale map
 \[
  \Spa(C,C^+) \to \Spa(A,A^+)
 \]
 for an open affinoid neighborhood neighborhood $\Spa(C,C^+)$ of~$y$.
 But then
 \[
  \Spa(B,B^+) \times_X \Spa(C,C^+) = \Spa(B,B^+) \cap \Spa(C,C^+)
 \]
 is finite étale over $\Spa(A,A^+)$ but not surjective.
 Hence the intersection is empty.
 But by assumption $X$ is connected, so there can be only one point of~$X$ mapping to the closed point of $\Spa(A,A^+)$.
\end{proof}

We say that an étale morphism of adic spaces $f\colon  Y \to X$ is \emph{tame} if all completed residue field extensions $\widehat{k(y)}/\widehat{(k(y)}$ are tamely ramified.

\begin{proposition} \label{red_fet}
 Let $f\colon Y \to X$ be an étale morphism and $y \in Y$ a point with $\ch^+(y) = p > 0$.
 Then there is a diagram
 \[
  \begin{tikzcd}
   Y'	\ar[r,"\text{tame}"]	\ar[d,"\text{finite étale}"']	& Y	\ar[d,"f"]	\\
   X'	\ar[r,"\text{tame}"']									& X
  \end{tikzcd}
 \]
 such that
 \begin{itemize}
  \item $y$ lies in the image of $Y' \to Y$,
  \item $Y' \to X'$ is a composition of (finitely many) finite étale morphisms of degree~$p$.
 \end{itemize}
\end{proposition}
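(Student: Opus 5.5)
Since the tame maps in the diagram may be taken to be compositions of tame neighbourhoods of a geometric point over~$y$, I would work throughout with tame localizations and use \cref{etale_henselian} to turn étale into finite étale. Let $\bar y$ be a geometric point over~$y$, $x=f(y)$, and consider the tame henselizations $Y^t_{\bar y}$ and $X^t_{\bar y}$ (limits of tame pointed étale neighbourhoods). As noted after the definition of the wild degree, $f^t_{\bar y}\colon Y^t_{\bar y}\to X^t_{\bar y}$ is finite étale of degree $\wild_f(y)$. Since $X^t_{\bar y}$ is henselian, \cref{etale_henselian} confirms that the connected component of the base change is finite étale; this is the same input already used in the proof of \cref{wd_upper_semicont}.

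The next step is to analyse $\widehat{k(y)}^t/\widehat{k(x)}^t$, which has degree $\wild_f(y)$, a power of $p$ because $\ch^+(y)=p$. The Galois closure of a finite extension of a tamely closed henselian valued field is totally wild, so its Galois group is a $p$-group $P$. A $p$-group has a normal series with successive quotients of order~$p$. I would first pass to the Galois closure over $X^t_{\bar y}$, which replaces $Y$ by a further finite étale cover over which $y$ still has preimages. This step costs nothing on the $Y$ side if we instead descend to the chain of intermediate fields between $\widehat{k(x)}^t$ and $\widehat{k(y)}^t$ inside that Galois closure. Each subgroup $H\subseteq P$ with $[P:H]$ a power of $p$ admits a chain $H=H_0\subset H_1\subset\dots\subset P$ with each index equal to~$p$; here one uses that in a $p$-group every proper subgroup is contained in a subgroup of index $p$ of its normalizer. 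This yields a tower
\[
\widehat{k(x)}^t=K_0\subset K_1\subset\dots\subset K_m=\widehat{k(y)}^t
\]
with $[K_{i+1}:K_i]=p$.

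By the equivalence between finite étale covers of a henselian local adic space and finite separable extensions of its (completed) residue field, each $K_i$ corresponds to a finite étale cover $Z_i\to X^t_{\bar y}$ with $Z_m=Y^t_{\bar y}$. The tower gives a chain of finite étale morphisms of degree~$p$. One should remember here that the residue fields of the henselization and the completed residue fields have the same finite separable extensions.

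Finally I spread out the data by a limit argument. The $Z_i$, the degree-$p$ morphisms between them, and the identification $Z_m\cong Y^t_{\bar y}$ are all finitely presented, so they descend to some tame étale neighbourhood $X'\to X$ of~$\bar y$. The result is finite étale maps $Y'=Z'_m\to\dots\to Z'_0=X'$ of degree~$p$, together with a map $Y'\to Y$ that is tame étale and whose image contains~$y$, after shrinking. I expect this spreading out to be the main obstacle. Finite étale covers over the limit must come from finite étale covers at a finite stage, and this requires knowing that the tame henselization is a limit of affinoids along which finite étale covers (and their morphisms and degrees) descend; I would cite the corresponding results of \cite{HueAd} \S 6. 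One must also check that $Y'\to Y$ is tame, i.e.\ a tame neighbourhood of $\bar y$. This holds because $Y'$ and $Y$ have the same tame localization at $\bar y$, so the residue field extension at the chosen point is trivial up to tame extension, and tameness then holds on a neighbourhood by openness of the tame locus (closedness of $R_f$).
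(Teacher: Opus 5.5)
Your proposal is correct and follows essentially the same route as the paper: base change to the tame henselization, finiteness via \cref{etale_henselian}, splitting the purely wild ($p$-group) extension into degree-$p$ steps, and descending by finite presentation to a tame étale neighbourhood $X'\to X$. The only difference is cosmetic. In the paper $Y'$ comes from the connected component of $X^t_x\times_X Y$, so $Y'\to Y$ factors through the tame base change $X'\times_X Y\to Y$ and no openness argument is needed; with your openness-of-the-tame-locus argument you would instead have to shrink $X'$ (not $Y'$) to keep $Y'\to X'$ finite.
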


\begin{proof}
 Let~$x$ denote the image of~$y$ in~$X$.
 Consider the basechange of~$f$ to the tame henselization of~$X$ at~$\bar{x}$:
 \[
  \begin{tikzcd}
   X_x^t \times_X Y	\ar[r]	\ar[d,"f_x^h"']	& Y	\ar[d,"f"]	\\
   X_x^t			\ar[r]					& X.
  \end{tikzcd}
 \]
 Choose a preimage $y_0 \in X_x^t \times_X Y$ of~$y$ and denote by $Y_0 \subseteq X_x^t \times_X Y$ the connected component containing~$y_0$.
 By \cref{etale_henselian} the morphism $Y_0 \to X_x^t$ is finite étale.
 Since $X_x^t$ does not admit any nontrivial tame covers, the morphism $Y_0 \to X_x^t$ is purely wild at~$y_0$.
 In particular, it is a $p$-extension and as such it factors into a composition of finite étale morphisms of degree~$p$.

 As finite étale morphisms are algebraically of finite presentation by \cite[Corollary~1.7.3]{Hu96}, the morphism $Y_0 \to X_x^t$ is the base change of a finite étale morphism $Y' \to X'$ of the same degree for some tame étale neighborhood $(X',x') \to (X,x)$.
\end{proof}

\section{The wild degree for curves over an algebraically closed field} \label{sect:alg_closed_ramification_locus}

In this section we study finite étale morphisms
\[
 f\colon  Y \longrightarrow X
\]
of degree~$p$ of smooth curves~$X$ and~$Y$ over an algebraically closed affinoid field~$(k,k^+)$.
We will prove by induction on the rank of~$k^+$ that the wild degree is constant on sufficiently small constructible segments.
The starting point is the rank one case, where we can use the structure theory of finite étale morphisms of Berkovich curves studied in \cite{CTT16}.

The most important invariant that is used in \cite{CTT16} to describe the ramification behaviour of an étale morphism $f_B \colon  Y_B \to X_B$ of nice Berkovich curves over the algebraically closed field~$k$ is the different~$\delta_{f_B}$.
Its value at a point $y_B \in Y_B$ mapping to $x_B \in X_B$ is the different of the completed residue field extension $\cH(y_B)/\cH(x_B)$.
For divisorial and cut points (type~$2$ and~$3$) the different is less than one if and only if the extension $\cH(y_B)/\cH(x_B)$ is wildly ramified.
By \cite[Corollary~4.1.8]{CTT16}, the different~$\delta_{f_B}$ is piecewise $\lvert k^\times \rvert$-monomial.

In order to describe the ramification locus of~$f_B$ we choose a skeleton $(\Gamma_{Y_B},\Gamma_{X_B})$ of~$f_B$ and denote by $\Gamma_B \subseteq \Gamma_{Y_B}$ the subgraph consisting of all points where~$f_B$ is wild.
Since all points on~$\Gamma_{Y_B}$ are cut or divisorial points, $\Gamma_B$ can be described as the subset of~$\Gamma_{Y_B}$ comprised of all points where the different is less than one.
Since $\Gamma_{Y_B}$ is a skeleton of~$Y_B$, the complement of~$\Gamma_{Y_B}$ is a disjoint union of closed constructible unit h-discs (see \cite[\S 4.7]{adiccurves}.
Every such disc corresponds to a branch of a divisorial point on~$\Gamma_{Y_B}$ pointing away from~$\Gamma_{Y_B}$. 
On the unit disc we can meassure the distance to the Gauß point using the radius function.
More precisely, the distance of a point~$z$ in the disc to the Gauß point is $-\log r_z$, where~$r_z$ is the radius of~$z$.
It equals~$\infty$ if~$z$ is a classical point and~$0$ if~$z$ is the Gauß point.

For a divisorial point $z_0 \in \Gamma_B$ we define the set $C(\Gamma_B,z_0,\delta_{f_B}(z_0)^{1/(p-1)})$ as the set of all points $z \in Y_B \setminus \Gamma_{Y_B}$ lying in a disc corresponding to a branch at~$z_0$ of radius $r_z  \ge \delta_{f_B}(z_0)^{1/(p-1)}$.
Then $C(\Gamma_B,\delta_{f_B}^{1/(p-1)})$ is defined as the union of~$\Gamma_B$ with all the sets $C(\Gamma_B,z_0,\delta_{f_B}(z_0)^{1/(p-1)})$.
It is a so called \emph{radial set}.

\begin{theorem}[\cite{CTT16}, Theorem~7.1.4] \label{Berkovich_ramification_locus}
 Suppose~$X_B$ and~$Y_B$ are compact, separated, and quasi-smooth and $f_B\colon  Y_B \to X_B$ is finite, generically étale of degree~$p$.
 Then the wild locus of $f_B$ equals the radial set $C(\Gamma_B,\delta_{f_B}^{1/(p-1)})$.
\end{theorem}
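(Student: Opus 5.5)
The wild locus here is the set of points where $\cH(y)/\cH(x)$ is wildly ramified. My plan is to show that, for $\deg f_B=p$, this coincides with the locus $N_{f_B,p}$ of points of local multiplicity $n_{f_B}(y)=[\cH(y):\cH(x)]=p$, and then to compute $N_{f_B,p}$ by hand away from the skeleton. For the first step I use that $k$ is algebraically closed. At classical points, $\cH(y)=\cH(x)=k$, so they are never wild. At a point of type $2$, $3$ or $4$, the local degree divides into tame and wild parts, and the tame part is the prime-to-$p$ part of $n_{f_B}(y)$. Since $n_{f_B}(y)\le p$, the extension is wild exactly when it is purely wild of degree $p$. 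The one exception is a degree-$p$ extension that is tame because it is unramified with separable residue extension of degree $p$. I would check this case separately at type~$2$ points, using the slope characterization of the different below. On the skeleton $\Gamma_{Y_B}$, all points are of type $2$ or $3$, and the different there is $<1$ exactly at the wild points. So the wild locus on $\Gamma_{Y_B}$ is $\Gamma_B$ by definition, and it remains to understand the complement.

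Next I use the skeleton property of $(\Gamma_{Y_B},\Gamma_{X_B})$. Each connected component $D$ of $Y_B\setminus\Gamma_{Y_B}$ is an open disc attached at a divisorial point $z_0\in\Gamma_{Y_B}$. The map $f_B$ sends $D$ finitely onto a component of $X_B\setminus\Gamma_{X_B}$ with degree $n=n_{f_B}$ along the branch. Two cases follow.
\begin{itemize}
\item If $n=1$, then $f_B|_D$ is an isomorphism of discs and no point of $D$ is wild. In this case $\delta_{f_B}(z_0)=1$ in that direction, and since $p\ge 2$ no radius satisfies $r<1$ and $r\ge 1$ at once, so $D\cap C(\Gamma_B,\delta_{f_B}^{1/(p-1)})=\emptyset$, consistent with the formula.
\item If $n=p$, then $z_0\in\Gamma_B$. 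Normalize coordinates so that $D$ and its image are unit discs centred at $0$, and write $f_B(t)=\sum_{i\ge1}a_it^i$.
\end{itemize}
In the case $n=p$, degree $p$ on the disc forces $|a_p|=\max_i|a_i|$, with $p$ the largest index attaining this maximum. The different at $z_0$ computes to $\delta_{f_B}(z_0)=|a_1|/|a_p|$; more generally it is the ratio of the linear term to the dominant term.

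Then, for a point $z$ of radius $r<1$ centred at a classical point $c\in D$, I expand $f_B$ around $c$ and read off $n_{f_B}(z)$ from the Newton polygon. The local multiplicity is $p$ exactly when the degree-$p$ term dominates the linear term at radius $r$, i.e.\ $|a_p|r^p\ge|a_1|r$, where the coefficients are now taken after recentring at $c$. This gives $r\ge(|a_1|/|a_p|)^{1/(p-1)}=\delta_{f_B}(z_0)^{1/(p-1)}$. Terms of degree between $2$ and $p-1$ have coefficients divisible by $p$ in the relevant sense, or are controlled by the Newton polygon, so they cannot dominate. To show the threshold does not depend on the centre $c$, I would use that $\delta_{f_B}$ is piecewise monomial with the slope formula of \cite[Corollary~4.1.8]{CTT16}: along a path leaving $z_0$ inside the multiplicity-$p$ region, $\log\delta_{f_B}$ has slope $-(p-1)$ in $\log r$. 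Hence multiplicity $p$ persists until $\delta_{f_B}$ would reach $1$, which happens exactly at $r=\delta_{f_B}(z_0)^{1/(p-1)}$. Taking the union over all branches gives the radial set $C(\Gamma_B,\delta_{f_B}^{1/(p-1)})$.

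I expect the main obstacle to be the uniform control over the whole disc together with the boundary behaviour. This means showing that the Newton-polygon comparison between $a_1$ and $a_p$ is independent of the recentring point, and deciding whether the critical point $r=\delta_{f_B}(z_0)^{1/(p-1)}$ belongs to the wild locus. There the linear and $p$-th terms tie, so the reduction has the form $at+bt^p$ and the residue extension is separable of degree $p$, which is exactly the possibly tame case flagged above. I would try to resolve this with the slope formula of the different, combined with the closedness of the wild locus \cite[Corollary~4.4]{HueAd} and the convention used in \cite{CTT16}. If the critical point turns out to be tame, the radius inequality in the definition of the radial set would have to be read as strict.
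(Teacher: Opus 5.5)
The paper does not prove this statement. It quotes it from \cite{CTT16} and only adds a remark about points at the critical radius.

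\textbf{Main gap: you never use that $f_B$ is \'etale on each disc component.} Concretely, you need that $f_B$ is \'etale on every component $D$ of $Y_B\setminus\Gamma_{Y_B}$, i.e.\ that the skeleton of $f_B$ contains the ramification points. Without this the statement is false. Take $t\mapsto t^p$ on the closed unit disc over $\mathbb{C}_p$ with skeleton the Gauss point $z_0$. Then $\delta_{f_B}(z_0)=|p|$, yet every $\zeta_{0,\rho}$ with $\rho<|p|^{1/(p-1)}$ is wild and lies outside the radial set.

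Your reason for discarding the terms of degree $2,\dots,p-1$ (``coefficients divisible by $p$'') is not valid. The correct reason is that $f'$ has no zero on the open disc. Hence $|f'|\equiv|a_1|$ on $D$ and $|ia_i|\le|a_1|$, and likewise $|ib_i|\le|b_1|$ after recentring at any $c\in D$. This single fact supplies what your sketch leaves unjustified:
\begin{itemize}
\item It gives $|b_i|r^i<|b_1|r$ for $2\le i\le p-1$.
\item It gives $|b_1|=|f'(c)|=|a_1|$. Together with $|a_j|\le|a_p|$ and $|c|<1$ it also gives $|b_p|=|a_p|$. So independence of the centre is immediate, with no slope formula needed.
\item It justifies $\delta_{f_B}(z_0)=|a_1|/|a_p|$; in general $|f'|_{z_0}=\sup_i|ia_i|$.
\item It justifies your dichotomy $n\in\{1,p\}$ for branch multiplicities.
\item It justifies the claim that $n=p$ forces $z_0\in\Gamma_B$.
\end{itemize}
For the last point, note that for a finite map of \emph{open} discs of degree $p$, the index $p$ is the \emph{smallest} (not the largest) index with $|a_p|=\max_i|a_i|$. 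Combined with $|a_1|\ge|a_i|$ for $i<p$, this gives $|a_1|<|a_p|$, so $\delta_{f_B}(z_0)<1$.

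\textbf{Second gap: type~4 points and the critical radius.} You only treat points $\zeta_{c,r}$ centred at classical points. Type~4 points need a limit argument, using that the multiplicity-$p$ locus is closed. This matters at $r_0=\delta_{f_B}(z_0)^{1/(p-1)}$.
\begin{itemize}
\item You are right that a type~2 point of radius $r_0$ has separable reduction $a\tilde s+b\tilde s^p$. It therefore has multiplicity $p$ but is tame. So the statement with ``$\ge$'' is literally wrong there, as the paper itself remarks.
\item A type~4 point of radius exactly $r_0$ is a limit of multiplicity-$p$ points, so it also has multiplicity $p$. Its extension $\cH(y)/\cH(x)$ is then a degree-$p$ defect extension, hence wild.
\end{itemize}
So reading the inequality as strict is wrong as well. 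The correct boundary is the paper's: ranger-valued radius strictly bigger than $r_0$.

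With \'etaleness on the discs built in and this boundary correction, your direct Newton-polygon route does go through. It is more elementary than relying on the different-function machinery of \cite{CTT16}.
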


In fact for the points of radius precisely equal to $\delta_{f_B}^{1/(p-1)}$ we need to be more careful.
These points can be either divisorial or unibranch and the divisorial points of radius $\delta_{f_B}^{1/(p-1)}$ do not belong to the wild locus but the unibranch points to.
Using rangers the picture becomes clearer because the radius of a unibranch point would naturally be the ranger which is infinitesimally larger than $\delta_{f_B}^{1/(p-1)}$.
So the ramification locus becomes the radial set of all points of radius strictly larger than $\delta_{f_B}^{1/(p-1)}$.

We would like to transfer this result to the category of adic spaces.
The skeleton $(\Gamma_{Y_B},\Gamma_{X_B})$ of~$f_B$ comes from a skeleton $(\Gamma_Y,\Gamma_X)$ of~$f$.
In particular, the complement of~$\Gamma_Y$ is a disjoint union of constructible closed unit h-discs.
Let $\Gamma \subseteq \Gamma_Y$ be the closed subgraph consisting of all points where~$f$ is wild.
The different as it is defined is a function on the Berkovich space~$Y_B$.
We lift it to a function~$\delta_f$ on the adic space~$Y$ by setting
\[
 \delta_f(y) := \delta_{f_B}(y_B).
\]
Here~$y_B$ is the image of~$y$ in~$Y_B$.
Moreover, we embed~$\R$ into the set of rangers $\cR_{\lvert k^\times \rvert}$ in the obvious way such that we can make sense of inequalities of the form $\delta_f(y) \le r$ for $r \in \R_{\lvert k^\times \rvert}$.
This is quick and dirty and does not reflect the ramification behaviour at infinitesimal points.
We will investigate a refined different function in upcoming work but for our present purposes this coarse invariant suffices.

On the unit disc $\D_k = \Spa(k \langle T \rangle,k^\circ \langle T \rangle)$ we define the radius function as follows:
\begin{align*}
 r \colon \D_k	& \longrightarrow \cR_{\lvert k^\times \rvert}	\\
 x				& \mapsto \inf_{c \in k} \lvert T - c \rvert,
\end{align*}
where the infimum is taken in the set of rangers.
The main differences to the radius function on the Berkovich unit disc are twofold.
Firstly, we also assign a radius to infinitesimal points, which nonsurprisingly turns out to be an infinitesimal ranger.
Secondly, the radius of a unibranch point is not a principal ranger as one might expect but an infinitesimal or cut ranger.

Let us now assume that $f\colon  Y \to X$ is a finite generically étale morphism of degree~$p$ of proper smooh curves over $(k,k^\circ)$.
For a divisorial point $z_0 \in \Gamma$ we define the set $C(\Gamma,z_0,\delta_f(z_0)^{1/(p-1)})$ as the set of all points $z \in Y \setminus \Gamma_Y$ lying in a disc corresponding to a branch at~$z_0$ of radius
\[
 r(z) > \delta_f(z_0)^{1/(p-1)}.
\]
Note that these are intervals of the form $[z_0,w]$, where~$w$ is eiter an infinitesimal or a unibranch point.
As in the Berkovich setting we define the radial subset $C(\Gamma,\delta_f^{1/(p-1)})$ as the union of~$\Gamma$ with the sets $C(\Gamma,z_0,\delta_f(z_0)^{1/(p-1)})$.

\begin{theorem} \label{alg_closed_ramification_locus}
 Let $f \colon Y \to X$ be a finite, generically étale morphism of degree~$p$ of smooth, proper curves over the algebraically closed nonarchimedean field $(k,k^\circ)$ of residue characteristic~$p$.
 Then the wild locus of~$f$ equals $C(\Gamma,\delta_f^{1/(p-1)})$.
\end{theorem}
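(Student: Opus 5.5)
We transfer \cref{Berkovich_ramification_locus} from the Berkovich curve $Y_B$ to the adic curve $Y$ along the retraction $\pi\colon Y \to Y_B$, $y \mapsto y_B$. Since $(k,k^\circ)$ has rank one, every point $y \in Y$ is either a rank-one point, in which case $k(y)$ and $\cH(y_B)$ have the same completion, or a rank-two infinitesimal or unibranch point specializing from the rank-one point $y_B$ (a divisorial point of type~$2$) or lying over a type~$4$ point. The argument splits accordingly into three cases.

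For rank-one points $y$ we have $\widehat{k(y)} = \cH(y_B)$, and the same holds for $x$. Hence $y \in R_f$ if and only if $y_B$ lies in the wild locus of $f_B$. By \cref{Berkovich_ramification_locus}, this is the case exactly when $y_B \in C(\Gamma_B,\delta_{f_B}^{1/(p-1)})$. For classical, cut, and type~$4$ points the radius is a real number, or $0$ for classical points, and is computed identically in the two settings. On $\Gamma_Y$ the wild locus is $\Gamma$ by definition. For divisorial points off the skeleton we use the refinement noted after \cref{Berkovich_ramification_locus}: a divisorial point of radius exactly $\delta^{1/(p-1)}$ is not wild, which matches the strict inequality $r(z) > \delta_f(z_0)^{1/(p-1)}$ in the adic definition. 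So the two descriptions agree on rank-one points.

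Infinitesimal points $y$ attached to a divisorial point $y_B$ need a separate argument. Here $k(y) = k(y_B)$ as fields, but the valuation is the composite of the rank-one valuation with a valuation on the residue curve $\widetilde{\cH(y_B)}$ corresponding to a branch $v$ at $y_B$. The plan is to compute the wild degree of a composite valuation by splitting it into the rank-one part and the part coming from the residue field. If $\cH(y_B)/\cH(x_B)$ is wild with $e=f=1$, i.e.\ of defect type, it remains wild for every refinement, and in the adic definition every branch at $y_B$ either lies on $\Gamma$ or has radius $r(z) > \delta^{1/(p-1)}$. If instead $\cH(y_B)/\cH(x_B)$ has residue degree $p$ with inseparable residue extension, then it is wild at the branch $v$ exactly when the residue curve map stays inseparable in direction $v$; in a degree-$p$ purely inseparable map of curves this holds for every branch. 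If $\cH(y_B)/\cH(x_B)$ is tame, both sides are tame. Comparing with the radius convention, the infinitesimal points at the boundary divisorial point of radius $\delta^{1/(p-1)}$ in the inward direction get radius infinitesimally larger than $\delta^{1/(p-1)}$, hence lie in $C$, and they are indeed wild because the adjacent open annulus is wild. The outward infinitesimal points at the boundary divisorial point are not in $C$ and are tame, since the boundary point itself is tame and, in a tame extension of rank one, every refinement of the valuation remains tame.

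The remaining points are unibranch points over type~$4$ points. These also have radius a cut ranger, and since $\cH(y_B)$ is immediate over the relevant discs we settle them by approximation. The wild locus is closed by \cite[Corollary~4.4]{HueAd}, and $C$ is closed as well, so it suffices that they agree on a dense set, which we then check against the strict radius inequality. The main obstacle is this boundary bookkeeping at radius exactly $\delta^{1/(p-1)}$, in particular the infinitesimal points at radius-boundary divisorial points. The cleanest route is to use that $\delta_f$ is monomial along each disc branch with slope determined by \cite{CTT16}, and that the wild locus of a pure $p$-extension is detected by $\delta<1$ at adjacent cut points.
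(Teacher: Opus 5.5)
There is a genuine gap in your treatment of infinitesimal points $y$ whose rank-one generalization $y^\circ$ is a \emph{tame} divisorial point. You claim that in this case ``both sides are tame'', and justify it by saying that in a tame rank-one extension every refinement of the valuation remains tame. That is false. When $f$ is tame at $y^\circ$, the extension $k(y^\circ)^\succ/k(x^\circ)^\succ$ is a finite separable extension of function fields of curves over $k^\succ$. The induced map of smooth projective residue curves $C_{y^\circ}\to C_{x^\circ}$ can still be wildly ramified at some closed points; for example, the Artin--Schreier map $w\mapsto w^p-w$ is wild at $\infty$. The infinitesimal points at $y^\circ$ in those directions are wild.

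Your own boundary analysis contradicts the claim. The divisorial point of radius exactly $\delta_f(z_0)^{1/(p-1)}$ is tame, yet you show that its infinitesimal point pointing toward $z_0$ is wild. That particular argument is correct: the point lies in the closure of the adjacent wild annulus, and $R_f$ is closed. Together with specialization from wild divisorial points, it gives $C(\Gamma,\delta_f^{1/(p-1)})\subseteq R_f$ at infinitesimal points. The reverse inclusion, however, rests only on the false lemma. This covers the tameness of the remaining infinitesimal points at a boundary divisorial point, and of all infinitesimal points at tame divisorial points outside $C(\Gamma,\delta_f^{1/(p-1)})$.

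What is missing is a criterion that decides, at a tame divisorial point, in which directions the residue map $C_{y^\circ}\to C_{x^\circ}$ is wild. The paper gets this from the Riemann--Hurwitz formula for that map. There the local term at a closed point $z$ is $e_z-1-\mathrm{slope}_z\,\delta_{f_B}$, so $y$ is wild if and only if $\delta_{f_B}$ has nonzero slope in the direction of $y$. The proof of \cite[Theorem~7.1.4]{CTT16} then shows this slope is $\pm(p-1)$ at the infinitesimal points of the sets $C(\Gamma,z_0,\delta_f(z_0)^{1/(p-1)})$ and $0$ on the complement of $C(\Gamma,\delta_f^{1/(p-1)})$. Your closing remark about slopes of $\delta_f$ points in the right direction. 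Without this link to ramification of the residue curves, however, it says nothing about infinitesimal points.

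Two smaller issues:
\begin{itemize}
\item Over a rank-one base, unibranch (type~4) points have rank one, so your third case is vacuous. Also, ``two closed sets agreeing on a dense subset coincide'' is not a valid inference in any case.
\item In your first case, the adic radius of a type~4 point is not the Berkovich radius. It is a non-principal ranger, equal to $\rho^+$ when the Berkovich radius $\rho$ lies in $|k^\times|$. This is exactly why the wild type~4 points of Berkovich radius $\delta_f(z_0)^{1/(p-1)}$ lie in $C(\Gamma,\delta_f^{1/(p-1)})$.
\end{itemize}
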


\begin{proof}
 A point $y \in Y$ of rank~$1$ is contained in the wild locus of~$f$ if and only if its image~$y_B$ in the Berkovich space~$Y_B$ is contained in the ramification locus of~$f_B$.
 Hence, \cref{Berkovich_ramification_locus} settles the question whether they lie in the wild locus for all points of rank~$1$.

 Let us now investigate the matter for an infinitesimal point~$y$.
 It has a unique generalization of rank~$1$, which is a divisorial point that we denote by~$y^\circ$.
 We write~$x$ and~$x^\circ$ for the images of~$y$ and $y^\circ$ in~$Y$.
 If~$y^\circ$ lies in the wild locus, so does~$y$ as the tame locus is open by \cite[Corollary~4.4]{HueAd}.
 We are left with studying the case where~$f$ is tame at~$y^\circ$, i.e. $\widehat{k(y^\circ)}/\widehat{k(x^\circ)}$ is unramified.
 The extension of specialization fields $k(y^\circ)^\succ/k(x^\circ)^\succ$ is a finite separable extension of one-dimensional function fields over~$k^\succ$.
 The corresponding morphism $f_{y^\circ}^\succ \colon  C_{y^\circ} \to C_{x^\circ}$ of smooth projective curves ramifies at the point corresponding to~$y$ if and only if~$y$ is contained in the wild locus.
 By the Riemann Hurwitz formula we have
 \[
  2(g(C_{y^\circ}) -2) = 2p(g(C_{x^\circ}) - 2) + \sum_{z \in C_{y^\circ}} (-\mathrm{slope}~\delta_{f_B}(z) + e_z -1),
 \]
 where the slope of~$\delta_{f_B}$ at~$z$ is defined as follows.
 We interpret~$z$ as an infinitesimal point of~$Y$ and as such it corresponds to a branch of~$Y_B$ emanating from $y_B = z_B$.
 Now~$\delta_{f_B}$ is a piecewise monomial function on~$Y_B$, so there is some point $z'_B$ on the branch of~$z$ such that~$\delta_{f_B}$ is monomial on $[y_B,z'_B]$.
 The slope of~$\delta_{f_B}$ at~$z$ is defined as the exponent of this monomial function.

 We know that~$f_{y^\circ}^\succ$ is wild at~$z$ if and only if the corresponding summand in the Riemann Hurwitz formula is greater than $e_z - 1$, or in other words if and only if the slope of~$\delta_{f_B}(z)$ is nonzero.
 Now by \cite[Theorem~7.1.4]{CTT16} and its proof the different has slope $\pm(p-1)$ at all infinitesimal points in the sets $C(\Gamma,z_0,\delta_f(z_0)^{1/(p-1)})$ and slope zero on the complement of $C(\Gamma,\delta_f^{1/(p-1)})$.
 This proves the theorem.
\end{proof}

\begin{corollary} \label{wild_all_intervals}
 Let $f\colon  Y \to X$ be a finite étale morphism of smooth curves of degree~$p$ over an algebraically closed nonarchimedean field $(k,k^\circ)$ of residue characteristic~$p$.
 Then the wild degree is constant on sufficiently small constructible segments.
\end{corollary}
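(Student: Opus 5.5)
Since $f$ has degree~$p$ and $k$ is algebraically closed, the residue field extension at every point $y$ has degree $1$ or $p$. It is purely wild of degree~$p$ exactly when $y$ lies in the wild locus, and trivial otherwise. So $\wild_f$ takes only the values $1$ and $p$, and it equals $p$ precisely on $R_f$. The claim therefore reduces to the following: for every point $y\in Y$ and every constructible segment $I$ at~$y$, there is a constructible subsegment $J\subseteq I$ at~$y$ that lies either entirely inside $R_f$ or entirely outside it.

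First I reduce to the situation of \cref{alg_closed_ramification_locus}. The question is local at~$y$, and a smooth curve over $(k,k^\circ)$ admits a compactification to a smooth proper curve. Using local uniformization (\cite[Theorem~4.6.5]{adiccurves}) together with the fact that finite étale covers of degree~$p$ of affinoids extend to finite generically étale covers of the compactifications, I may assume that $f$ is the restriction of a finite generically étale degree~$p$ morphism $\bar f\colon \overline{Y}\to\overline{X}$ of proper smooth curves. The wild locus of $f$ is then the restriction of the wild locus of $\bar f$, which by \cref{alg_closed_ramification_locus} equals the radial set $C(\Gamma,\delta_f^{1/(p-1)})$ with respect to a skeleton $\Gamma_Y$.

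The main step is to show that this radial set is ``constructible'' in the sense relevant for segments. I intend to check that its intersection with any constructible segment $I$ is a finite boolean combination of intervals $[a,b]_s$. There are two pieces to control:
\begin{itemize}
\item[(a)] On the skeleton, $\Gamma$ is a closed subgraph, being the locus where the piecewise monomial different is $<1$. Since $\Gamma_Y$ is a finite graph, its boundary in $\Gamma_Y$ consists of finitely many points, and these have value group in $|k^\times|$, so they are divisorial.
\item[(b)] Off the skeleton, every branch lies in a disc, and the wild part of that disc is $\{z: r(z)>\delta_f(z_0)^{1/(p-1)}\}$, which is an interval $[z_0,w]$ with $w$ infinitesimal or unibranch.
\end{itemize}
A constructible segment $I$ meets only finitely many such discs near a given point, and inside a disc it is a finite union of intervals along which the radius function is monotone. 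Intersecting with a condition of the form $r>\rho$ therefore cuts each of these intervals at a single ranger, which yields a constructible subset. From this it follows that every point $y$ has a subsegment $J$ at $y$ on which membership in $R_f$ is constant: I shrink $I$ so that $y$ is its only boundary point of $R_f\cap I$ on the relevant side. If $y$ is itself a boundary point, I take $J$ on the side where $y$'s membership agrees with its neighbours. This is possible because $R_f$ is closed (\cite[Corollary~4.4]{HueAd}), and because at a cut point of the radial boundary the adic picture (for example, the divisorial point of radius exactly $\delta^{1/(p-1)}$ being tame while its infinitesimal neighbour pointing outward is wild) splits the segment into a closed wild piece and an open tame piece.

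I expect the main obstacle to be this last boundary analysis at the endpoints of the radial set, including infinitesimal and unibranch points. There I have to use the ranger description of $r(z)$ from the discussion preceding \cref{alg_closed_ramification_locus}. Concretely, I must check that for any segment at a boundary point $y$, some subsegment at $y$ lies on one side. Since segments at a non-divisorial point must contain other points, I have to show they can be chosen within $\{r>\rho\}$ or within $\{r\le\rho\}$ as appropriate, which is precisely what the ranger formulation of the radius inequality in \cref{alg_closed_ramification_locus} is designed to give.
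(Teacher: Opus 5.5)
Your route is the paper's: localize to affinoids, extend $f$ to a finite generically étale degree-$p$ map of smooth proper curves, and read off constancy from the radial description of the wild locus in \cref{alg_closed_ramification_locus}. The paper simply calls the last step clear. Your analysis of how $\Gamma$ and the conditions $r>\delta_f(z_0)^{1/(p-1)}$ cut a constructible segment into finitely many constructible pieces is a reasonable way to make it explicit.

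The opening paragraph, however, rests on a false claim, although its conclusion is true. The local degree $[\widehat{k(y)}:\widehat{k(x)}]$ need not be $1$ or $p$. Even when it equals $p$, the extension need not be wild. For example, $T\mapsto T^p-T$ is finite étale of degree $p$ on the closed unit disc, since its derivative is a unit there. At the Gauss point the residue extension is the Artin--Schreier extension $\tilde k(t)/\tilde k(t^p-t)$. So $\widehat{k(y)}/\widehat{k(x)}$ is unramified of degree $p$ and $\wild_f(y)=1$.

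The conclusion you use still holds, for a different reason. Put $K=\widehat{k(x)}$ and $L=\widehat{k(y)}$. Then $L^t=LK^t$, so $\wild_f(y)=[LK^t:K^t]\le[L:K]\le p$. This number is also a power of $p$, being the index of an open subgroup of the pro-$p$ wild inertia group of $K$. Hence $\wild_f\in\{1,p\}$, with $\wild_f=p$ exactly on $R_f$.

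Two smaller points in your boundary analysis need fixing. First, at the divisorial point $z$ of radius exactly $\delta_f(z_0)^{1/(p-1)}$, the wild infinitesimal neighbour is the one pointing back toward $z_0$, whose radius is infinitesimally larger. The infinitesimal neighbours pointing into smaller discs are tame. Second, suppose the given segment $I$ at such a tame $z$ runs only into the wild direction; the same can happen at a tame transition point of $\delta_f$ on $\Gamma_Y$. Then there is no tame side to choose, and you must take $J=\{z\}$. This is allowed precisely because $z$ is divisorial.
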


\begin{proof}
 We just need to make sure we are in the postion to apply \cref{alg_closed_ramification_locus}.
 The question being local we can assume that~$X$ and~$Y$ are affinoid.
 Then we can extend~$f$ to a generically étale morphism
 \[
  \bar{f} \colon \bar{Y} \longrightarrow \bar{X}
 \]
 of smooth projective curves such that~$Y$ and~$X$ are open subspaces of~$\bar{Y}$ and~$\bar{X}$ and the restriction of~$\bar{f}$ to~$Y$ equals~$f$.
 By the description of the wild locus of~$\bar{f}$ in \cref{alg_closed_ramification_locus} it is now clear that the wild degree is constant on sufficiently small constructible segments.
\end{proof}

\cref{wild_all_intervals} is the key result for treating étale morphisms over algebraically closed fields.
Piecing together what we have done in the previous sections we can enhance it to a statement in full generality.

\begin{proposition} \label{alg_closed_higher_rank}
 Let $f\colon  Y \to X$ be an étale morphism of smooth curves over an algebraically closed analytic field $(k,k^+)$ of finite rank.
 Then the wild degree is constant on sufficiently small constructible segments.
\end{proposition}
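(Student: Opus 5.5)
The plan is to reduce, in three steps, to the degree-$p$ finite étale rank-one situation of \cref{wild_all_intervals}. First we reduce to finite étale morphisms of degree $p$, then to such morphisms through a chain, and finally we induct on the rank of $k^+$.

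\emph{Reduction to degree $p$.} Let $y \in Y_s$ and let $I$ be a constructible segment at $y$. If $\ch k(s)^\succ \neq p$, or more generally if $\ch^+(y)\ne p$, then every residue field extension near $y$ is tame. Hence $\wild_f \equiv 1$ near $y$ and there is nothing to prove. Otherwise \cref{red_fet} gives tame morphisms $Y'\to Y$ and $X'\to X$ together with $f'\colon Y'\to X'$, which is a composition of finite étale maps of degree $p$, and $y$ has a preimage $y'\in Y'$. Tame étale morphisms have wild degree $1$ everywhere. So by \cref{wd_multiplicative}, $\wild_f(g(z))=\wild_{f'}(z)$ for $z\in Y'$, where $g\colon Y'\to Y$. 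By \cref{segment_homeomorphism}(i) and (ii), we may shrink $I$ so that it is the homeomorphic image of a constructible segment $I'$ at $y'$. Then constancy of $\wild_f$ on a subsegment of $I$ is equivalent to constancy of $\wild_{f'}$ on the corresponding subsegment of $I'$. Next we write $f'=f_n\circ\cdots\circ f_1$ with each $f_i$ finite étale of degree $p$. Using \cref{segment_homeomorphism}(i) repeatedly, we shrink $I'$ until each partial composite restricts to a homeomorphism onto its image. \cref{sandwich}, applied inductively, then reduces us to a single finite étale morphism of degree $p$.

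\emph{Induction on the rank of $k^+$.} If $s$ is the rank-one point and $k^+=k^\circ$ on that fiber, we are in the setting of \cref{wild_all_intervals}. The residue field of a point of $Y_s$ only depends on the restriction of $f$ to the fiber, which is a curve over the algebraically closed affinoid field $(k(s),k(s)^+)$. For a general point $s$ of rank $r>1$, we can view $(k,k^+)$ as a composite of $k^\circ$ with a valuation ring $k^{\succ,+}$ of rank $r-1$ on the algebraically closed residue field $\tilde k$ of $k^\circ$. A segment over $s$ at $y$ then falls into one of two cases. In the first case it lies in the fiber over the rank-one generization, where the rank-one case applies, since segments over $s$ are obtained by the valuative graph construction uniformly. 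In the second case it lies inside the reduction: the points over $s$ specializing from a divisorial point $y^\circ$ of rank one correspond to points of the smooth projective residue curve $C_{y^\circ}$ over $\tilde k$, equipped with the rank-$(r-1)$ valuation. There $f$ induces $f^\succ_{y^\circ}$, and we apply the induction hypothesis, in its discretely valued or analytic form, to this degree-$\le p$ morphism of curves over $(\tilde k,\tilde k^+)$. Here the wild degree at $y$ over $s$ should be the wild degree of the composite valuation, which factors as the wild degree at $y^\circ$ times the wild degree of $f^\succ_{y^\circ}$ at the corresponding point.

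\emph{Main obstacle.} The hardest step is making this last comparison precise. We must check that the wild degree of a composite valuation is multiplicative in the rank-one part and the residual part. When $y^\circ$ is wild, the analysis in the proof of \cref{alg_closed_ramification_locus} shows that the whole fiber is wild. When $y^\circ$ is tame, everything is governed by the residue curve. The argument therefore reduces to the tame-at-$y^\circ$ case and uses that tameness at $y^\circ$ is open, by \cite[Corollary~4.4]{HueAd}. We must also handle segments running from a rank-one edge point into an infinitesimal direction; for these, the slope of $\delta_f$ as used in the proof of \cref{alg_closed_ramification_locus} settles the matter.
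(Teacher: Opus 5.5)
\textbf{There is a gap in the rank $>1$ step.} Your reduction to a single finite étale map of degree $p$ matches the paper, as does the plan to induct on the rank through the residue curve of a divisorial rank-one point. The problem is that your case split in the induction step is the wrong one, and the transfer from the rank-one fiber to the fiber over $s$ is missing.

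A constructible segment $I$ at $y\in Y_s$ with $s\neq\eta$ never lies in the fiber over the rank-one point $\eta$. What it has is a generic part $I_\eta$, which contains the rank-one generization $y_\eta$ of $y$. Knowing from \cref{wild_all_intervals} that $\wild_f$ is constant on a subsegment of $I_\eta$ says nothing about the points over $s$ when that constant value is $1$, because residual wildness is invisible over $\eta$. So the dichotomy has to be on whether $f$ is wild at $y_\eta$, not on where the segment sits.

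\emph{Case $f$ wild at $y_\eta$.} Take a wild subsegment $I'_\eta\subseteq I_\eta$ at $y_\eta$. Lift it by \cref{lift_segment} to a subsegment $I'\subseteq I$ at $y$ whose generic part is $I'_\eta$. Every point of $I'$ specializes a point of $I'_\eta$, and the tame locus is open, so every point of $I'$ is wild. Since $\deg f=p$, the wild degree is then constantly $p$ on $I'$.

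\emph{Case $f$ tame at $y_\eta$.} You may assume $y$ is wild, since the tame case is trivial by \cref{wd_upper_semicont}. You must first show that $y_\eta$ is divisorial. Otherwise its specialization field is the algebraically closed field $k^\circ/k^{\circ\circ}$, and $y$ could not be wild. Only then can you shrink $I$ into $\overline{\{y_\eta\}}$, by applying \cref{lift_segment} to $\{y_\eta\}\subseteq I_\eta$, and run the induction on the residue curve. Your second case silently presupposes both of these steps, and your first case never supplies the lifting argument.

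\textbf{Smaller points.}
\begin{enumerate}
\item The claimed multiplicativity of wild degrees for a composite valuation fails in general. If the residue extension at $y^\circ$ is inseparable, the inseparable degree is counted twice. You only need the case where $y^\circ$ is tame, and there $\wild_f(y)$ simply equals the wild degree of $f^\succ_{y^\circ}$ at the corresponding point.
\item The appeal to slopes of $\delta_f$ is unnecessary: infinitesimal points of $Y_\eta$ are already covered by \cref{wild_all_intervals}.
\item That all specializations of a wild $y^\circ$ are wild follows from openness of the tame locus, not from the analysis in \cref{alg_closed_ramification_locus}.
\end{enumerate}
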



\begin{proof}
 Let~$p$ be the residue characteristic of $(k,k^+)$.
 We assume that it is positive because otherwise there is nothing to prove.
 We start with a point $y \in Y$ where~$f$ is wild and a segment~$I$ at~$y$.
 By \cref{red_fet} there is a diagram
 \[
  \begin{tikzcd}
   Y'	\ar[r,"\text{tame}"]	\ar[d,"\text{finite étale}"',"f'"]	& Y	\ar[d,"f"]	\\
   X'	\ar[r,"\text{tame}"']									& X
  \end{tikzcd}
 \]
 such that
 \begin{itemize}
  \item $y$ lies in the image of $Y' \to Y$,
  \item $f'\colon Y' \to X'$ is a composition of (finitely many) finite étale morphisms of degree~$p$.
 \end{itemize}
 Choose a preimage $y' \in Y'$ of $y$.
 By \cref{segment_homeomorphism}~(i), the preimage~$I'$ of~$I$ in~$Y'$ is a constructible segment at~$y'$.
 Shrinking~$I'$ we may assume that $I' \to I \to f(I)$ is a homeomorphism onto its image (see \cref{segment_homeomorphism}~(ii)).
 Suppose that we know that the wild degree is constant on sufficiently small constructible segments for morphisms of degree~$p$.
 This is also true for tame morphisms as for such morphisms the wild degree is constant of value~$1$.
 Then the claim follows by repeatedly applying \cref{sandwich}.
 Therefore, we have reduced ourselves to treating finite étale morphisms of degree~$p$.

 We consider the segment~$I_\eta$ in the generic fiber~$Y_\eta$ defined as the preimage of the generic point $\eta \in S^\times$ under the morphism $I \to S^\times$.
 It contains the minimal generalization~$y_\eta$ of~$y$ in~$Y_\eta$.
 If~$f$ is wild at~$y_\eta$, we can use \cref{wild_all_intervals} to construct a subsegment $I'_\eta$ of~$I_\eta$ at~$y_\eta$ where~$f$ is wild.
 We now choose a constructible subsegment~$I'$ of~$I$ whose generic fiber is~$I'_\eta$ using \cref{lift_segment} below.
 Then~$f$ is wild on~$I'$.
 Since~$f$ is finite of degree~$p$ this means that the wild degree is constant on~$I'$.

 Let us now assume that~$f$ is not wild at~$y_\eta$.
 Then~$y_\eta$ has to be divisorial because otherwise the specialization field of~$y_\eta$ would equal $k^\circ/k^{\circ\circ}$, which is algebraicaly closed, and it would not be possible for~$f$ to be wild at~$y$.
 We consider the topological closure~$\overline{\{y_\eta\}}$ of the point~$y_\eta$ in~$Y$.
 It contains some infinitesimal points in~$Y_\eta$ and otherwise consists of points in other fibers, among them~$y$.
 By \cite[Corollary~3.5.3]{adiccurves}, the subspace
 \[
  Y_{y_\eta}^\succ := \overline{\{y_\eta\}} \setminus Y_\eta
 \]
 is naturally homeomorphic to a smooth curve over $(k^\circ/k^{\circ\circ},k^+/k^{\circ\circ})$ equipped with the valuation topology.
 More precisely, it is an open subspace of the analytification of the unique smooth curve over $k(\eta)^\succ = k^\circ/k^{\circ\circ}$ with function field $k(y_\eta)^\succ$.
 We now do the same for $x = f(y)$ and $x_\eta = f(y)_\eta = f(y_\eta)$.


 In total we get a morphism
 \[
  f_{y_\eta}^\succ \colon Y_{y_\eta}^\succ \longrightarrow X_{x_\eta}^\succ
 \]
 of smooth curves over $(k^\circ/k^{\circ\circ},k^+/k^{\circ\circ})$.
 Since $f$ is unramified at~$y_\eta$, the extension $k(y_\eta)^\succ/k(x_\eta)^\succ$ of specialization fields is finite separable of degree~$p$.
 Therefore, the corresponding morphism of curves is generically étale.
 We can view the point~$y$ as an element of $Y_{y_\eta}^\succ$.
 By assumption,~$f_{y_\eta}^\succ$ is wild at~$y$.

 Applying \cref{lift_segment} to the subsegment $\{y_\eta\} \subseteq I_\eta$, we can shrink~$I$ such that it is contained in~$Y_{y_\eta}^\succ$.
 In order to turn~$I$ into a segment of~$Y_{y_\eta}^\succ$, we have to modify its structure sheaves~$\cP_I$ and~$\cP_I^+$.
 We want to switch from~$\lvert k^\times \rvert$ as a base to the maximal convex subgroup $\Delta \subseteq \lvert k^\times \rvert$ (compare with the description of the fractal structure in \cite[\S~3.5]{adiccurves}).
 If $I = [a,b]_s$, we know that $I_\eta = \{y_\eta\}$ and we can normalize~$a$ and~$b$ in such a way that~$y_\eta$ corresponds to the ranger~$1$.
 Then $a$ and~$b$ are contained in the maximal proper convex subgroup~$\Delta$ of $\lvert k^\times \rvert$ and $I^\succ = \Spa(\Delta[a/t,t/b])$ is homeomorphic to~$I$ but its structure sheaves are the restrictions of the piecewise monomial functions of~$Y_{y_\eta}^\succ$.
 In particular,~$I^\succ$ is a segment at~$y$ in $Y_{y_\eta}^\succ$.
 The cases $I = [a,b)_s$, $I = (a,b]_s$, $I = (a,b)$, and $I = \{y\} \cup \bigcup_{i \in I} [a_i,b]$ are treated in the same way.

 The rank of the base field $(k^\circ/k^{\circ\circ},k^+/k^{\circ\circ})$ is one less than the rank of $(k,k^+)$.
 Consequently, we can assume by induction that the theorem holds over $(k^\circ/k^{\circ\circ},k^+/k^{\circ\circ})$.
 This means that we can find a constructible subsegment $I'^\succ \subseteq I^\succ$ at~$y$ where the wild degree of~$f_{y_\eta}^\succ$ is constant.

 The segment~$I'^\succ$ corresponds to a subsegment $I' \subseteq I$ of valuations that are compositions of~$y_\eta$ with a valuation of $k(y_\eta)^\succ$.
 The fact that~$f_{y_\eta}^\succ$ is wild at these points translates to the respective valuation of $k(y_\eta)^\succ$ being wild in $k(y_\eta)^\succ/k(x_\eta)^\succ$.
 Composing them with~$y_\eta$ we obtain valuations that are wild in $k(y_\eta)/k(x_\eta)$.
 In other words this means that~$f$ is wild on~$I'$.
\end{proof}






\begin{lemma} \label{lift_segment}
Let~$X$ be a smooth curve over~$S$ and let $x_1 \succ x_2$ be a specialization of points in~$X$ lying over $s_1 \succ s_2$ in~$S$.
 Suppose $I_2 \subseteq X^\times$ is a segment at~$x_2$ and $I_1$ a segment at~$x_1$ contained in $(I_2)_{s_1}$.
 Then there is a subsegment $I \subseteq I_2$ at~$x_2$ such that $I_{s_1} = I_1$
\end{lemma}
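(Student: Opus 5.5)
This is a local statement about valuative graphs, so the plan is to reduce it to an explicit computation on the model interval and then use the fact that constructible segments form a Boolean algebra. First I will localize at~$x_2$. By the local description of segments, $I_2$ near~$x_2$ is (via local uniformization \cite[Theorem~4.6.5]{adiccurves} and invariance under $h$-isomorphisms \cite[Proposition~5.3.6]{adiccurves}) isomorphic to a constructible piece of a model interval $[a,b]_{s_2}\subseteq \P^1_{\bar k}$. If $x_2$ is unibranch, the model is instead $\{x_2\}\cup\bigcup_{i\ge j}[a_i,a_j]_{s_2}$. Passing to $\bar k$ and descending is harmless by \cite[Lemma~4.3.11]{adiccurves}. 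So it suffices to treat the model case $I_2\subseteq\Spa(\overline\Gamma[T]_\star,\overline\Gamma^+[T]_\star)$ over $\Spa(k,k^+)$, with $I_1$ a constructible subsegment of the fiber $(I_2)_{s_1}$ containing~$x_1$.

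In the model case, points of $[a,b]$ over varying $s$ correspond to rangers. The fiber over $s_1$ is the set of rangers with respect to the coarser group $\lvert k(s_1)^\times\rvert^\Q$. This group is the quotient of $\overline\Gamma$ by the convex subgroup $\Delta$ attached to the generization $s_1$ of $s_2$. Specialization $x_1\succ x_2$ corresponds to $x_2$ lying in the $\Delta$-"fractal" cluster over $x_1$, i.e. to the projection of rangers modulo~$\Delta$. Write $I_1$ as a finite Boolean combination of intervals $[c,d]_{s_1}$ with endpoints that are divisorial, classical or infinitesimal over~$s_1$.

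The key construction is as follows. For each endpoint $c$ of $I_1$, lift it to an endpoint $\tilde c$ of an interval over $s_2$: take the closure in $X$ of the divisorial point $c$, or the corresponding classical point. Define $I:=I_2\cap\pi^{-1}(I_1)$, where $\pi$ maps a point over $s_2$ to its maximal generization over $s_1$. Concretely, $I$ is cut out by the same boolean combinations of intervals $[\tilde c,\tilde d]_{s_2}$, with $\tilde c,\tilde d$ chosen as the minimal/maximal specializations depending on open or closed ends. Then check three things: that $I$ is constructible, that $x_2\in I$ (because $x_1\in I_1$ and $x_1\succ x_2$), and that $I_{s_1}=I_1$ (because interval conditions with $s_2$-endpoints restrict on the $s_1$-fiber to the corresponding $s_1$-conditions).

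The main obstacle I expect is making $\pi^{-1}(I_1)\cap I_2$ genuinely constructible over~$s_2$. The concern is endpoints of $I_1$ that are divisorial points over~$s_1$: their specializations over $s_2$ form an entire $\Delta$-cluster, i.e. a curve over the residue field in the sense of \cite[Corollary~3.5.3]{adiccurves}. Including or excluding the whole cluster need not be a finite union of intervals $[a,b]_{s_2}$. I would handle this by noting that we only need a subsegment at~$x_2$, not all of $\pi^{-1}(I_1)$. So at such an endpoint $c$, choose inside the cluster a small $s_2$-interval through the relevant specialization, using density of $\overline\Gamma$ in $\cR_\Gamma$ as in \cref{segment_divisorial}. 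This is possible because $I_2$ itself, being constructible over $s_2$ and passing through $x_2$, meets each cluster only in finitely many intervals. One shrinks $I_2$ so that only the branch through~$x_2$ survives at each endpoint. The unibranch case is handled identically, intervalwise on the pieces $[a_i,a_j]$.
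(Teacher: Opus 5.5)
Your overall plan (reduce to a model interval $I_2=[a,b]_{s_2}$, lift the endpoints of $I_1$ to the $s_2$-level, take the same Boolean combination) is the paper's. The repair in your last paragraph, choosing divisorial lifts near the relevant specialization, is exactly its key step. But the construction you put at the center, $I:=I_2\cap\pi^{-1}(I_1)$ with $\tilde c,\tilde d$ the minimal/maximal specializations, does not work. It has to be replaced, not patched.

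Put $\Gamma_2=\lvert k(s_2)^\times\rvert^\Q$ and $\Gamma_1=\Gamma_2/\Delta$. Let $c$ be a divisorial endpoint of $I_1$ with ranger $\gamma_1$ and lift $\gamma\in\Gamma_2$.
\begin{itemize}
\item The $s_2$-points of $I_2$ over $c$ are the rangers $r$ with $\gamma\delta\le r\le\gamma\delta'$ for some $\delta,\delta'\in\Delta$, possibly together with the two boundary rangers below, depending on which generization defines $\pi$.
\item This convex set has no divisorial minimum or maximum. Its boundary consists of the cut rangers $(\gamma\Delta)^{\mp}$, which are not adjacent to any element of $\Gamma_2$. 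They therefore cannot be endpoints of a constructible segment over $s_2$.
\item Hence admissible extremal specializations do not exist, and $I_2\cap\pi^{-1}(I_1)$ is not constructible unless $c$ is also the corresponding endpoint of $(I_2)_{s_1}$.
\end{itemize}
The worry about whole clusters is a red herring: inside one model interval, $I_2\cap\pi^{-1}(c)$ is a single convex set of rangers, so there are no branches to prune.

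Worse, for an open divisorial end $c$ of $I_1$, your guiding principle (take a subsegment of $\pi^{-1}(I_1)$) cannot be realized at all. A constructible $I$ with $I_{s_1}=I_1$ must start at a divisorial $\alpha$ whose image $\bar\alpha\in\Gamma_1$ equals $c$, with $\Spa(k(\alpha),k(\alpha)^+)$ removed. So $I$ contains the $s_2$-points $\alpha\delta$, $1<\delta\in\Delta$, which lie over $c\notin I_1$.

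What works, and is the paper's proof, is simpler. For $I_1=[a_1',b_1']_{s_1}$, choose divisorial $a',b'\in\Gamma_2$ lifting $a_1',b_1'$ with $a\le a'\le x_2\le b'\le b$, and set $I=[a',b']_{s_2}$. For an open end, remove $\Spa(k(a'),k(a')^+)$, resp.\ $\Spa(k(b'),k(b')^+)$, accepting that $I\not\subseteq\pi^{-1}(I_1)$. Then $I_{s_1}=I_1$, because the defining inequalities $a'\le\lvert T\rvert\le b'$ read $a_1'\le\lvert T\rvert\le b_1'$ modulo $\Delta$. The lifts exist by the description of the fibre above.

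Your third check, ``$x_2\in I$ because $x_1\in I_1$ and $x_1\succ x_2$'', is also not automatic. Take $x_1$ to be the image of $x_2$ under the ranger projection, as the paper implicitly does. Monotonicity then gives $x_2\in I$ for endpoints different from $x_1$. But if an endpoint of $I_1$ equals $x_1$, an arbitrary lift can land on the wrong side of $x_2$. This happens e.g.\ for $I_1=\{x_1\}$ with $x_1$ divisorial, which is exactly how the lemma is used in the proof of \cref{alg_closed_higher_rank}. You must choose the lifts to bracket $x_2$.
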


\begin{proof}
Without loss of generality we may assume~$I_2$ is either isomorphic to $[a,b]_{s_2}$, $[a,b)_{s_2}$, $(a,b]_{s_2}$, or $[a,b)_{s_2}$ for~$a$ divisorial or~$0$ and~$b$ divisorial or to $\{x\} \cup \bigcup_{i \in I} [a_i,b]_s$ for~$x_2$ unibranch and~$a_i$ and~$b$ divisorial.
 For $I_2 = [a,b]_{s_2}$ with divisorial rangers~$a$ and~$b$ we get $(I_2)_{s_1} = [a_{s_1},b_{s_1}]_{s_1}$, where~$a_{s_1}$ and~$b_{s_1}$ are the images of~$a$ and~$b$ under the projection $\cR_{\lvert k(s_2)^\times\rvert} \to \cR_{\lvert k(s_1)\times\rvert}$.
 Then $I_1 \subseteq (I_2)_{s_1}$ is a subinterval with divisorial endpoints~$a'_1$ and~$b'_1$ such that $a_{s_1} \le a'_1 \le x_1 \le b'_1 \le b_{s_1}$.
 We obtain~$I$ by choosing lifts $a'$ and~$b'$ of $a'_1$ and~$b'_1$ in~$\cR_{\lvert k(s_2)^\times \rvert}$ satisfying $a \le a' \le x_2 \le b' \le b$ (where we identified~$x_2$ with the corresponding ranger in $[a,b]_{s_2}$).
 The other cases are similar.
\end{proof}

Using the following lemma we can pass from algebraically closed to separably closed base fields.

\begin{lemma} \label{purely_inseparable_wild}
 Let $f\colon  Y \to X$ be an étale morphism of smooth curves over an affinoid field $(k,k^+)$ of finite rank.
 We consider the base change of~$f$ along a finite purely inseparable extension $(\ell,\ell^+)/(k,k^+)$:
 \[
  f_\ell \colon  Y_\ell \longrightarrow X_\ell
 \]
 Then for any $y_\ell \in Y_\ell$ mapping to $y \in Y$ we have the equality
 \[
  \wild_{f_\ell}(y_\ell) = \wild_f(y)
 \]
 of wild degrees.
\end{lemma}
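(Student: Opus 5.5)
We will reduce the statement to a comparison of completed residue fields. Write $x = f(y)$, and let $y_\ell$, $x_\ell$ be the points over them. The base change $Y_\ell \to Y$ is a universal homeomorphism, because $\ell/k$ is finite purely inseparable; in particular $y_\ell$ is the unique point over~$y$. For the residue fields we have $k(y_\ell) = k(y)\otimes_k \ell$ up to nilpotents. Since $k(y)\otimes_k\ell$ is local with residue field the compositum $k(y)\ell$, which is purely inseparable of degree dividing $[\ell:k]$ over $k(y)$, we get that $\widehat{k(y_\ell)}$ is a finite purely inseparable extension of $\widehat{k(y)}$, generated by $\ell$. The same holds for $x$. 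Because $f$ is étale, $\widehat{k(y)}/\widehat{k(x)}$ is finite separable by \cref{etale_finite_fibers}, so $\widehat{k(y)}$ and $\widehat{k(x)}\ell$ are linearly disjoint over $\widehat{k(x)}$. This gives
\[
 \widehat{k(y_\ell)} = \widehat{k(y)}\cdot \widehat{k(x_\ell)}, \qquad \widehat{k(x_\ell)} = \widehat{k(x)}\,\ell ,
\]
with $[\widehat{k(y_\ell)}:\widehat{k(x_\ell)}] = [\widehat{k(y)}:\widehat{k(x)}]$. Here one should check that completion commutes with the finite base change. This holds because a finite extension of a complete valued field is complete and $\ell\otimes_k -$ preserves density.

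The claim now becomes a statement in valuation theory. Let $K/F$ be a finite separable extension of henselian (or complete) valued fields, and let $F'/F$ be finite purely inseparable. Then $n^w(KF'/F') = n^w(K/F)$. To prove this, fix an algebraic closure $\overline{F}$ containing everything, with the unique extension of the valuation. I claim that $(F')^t = F^t F'$. Indeed, $F^tF'/F'$ is tame, since tame extensions are stable under base change. Conversely, the maximal tame extension is determined by its separable part, and $F'/F$ is purely inseparable, so restriction of Galois groups identifies $\mathrm{Gal}(F'^{s}/F') \cong \mathrm{Gal}(F^s/F)$. Under this identification the ramification filtrations agree: the wild inertia subgroup is the pro-$p$ part of inertia, and inertia is defined by the action on the residue field and on the value group. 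A purely inseparable extension changes the residue field and value group only by purely inseparable or $p$-power index steps, and these do not affect tameness. Hence the fixed field of wild inertia over $F'$ is $F^tF'$, and likewise $(KF')^t = K^tF'$.

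It follows that $n^w(KF'/F') = [K^tF' : F^tF']$. Since $K^t/F^t$ is separable and $F^tF'/F^t$ is purely inseparable, they are linearly disjoint, so this degree equals $[K^t:F^t] = n^w(K/F)$. Applied to $K=\widehat{k(y)}$, $F=\widehat{k(x)}$, $F'=\widehat{k(x_\ell)}$, this gives $\wild_{f_\ell}(y_\ell)=\wild_f(y)$.

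The main obstacle is the identification $(F')^t=F^tF'$, that is, the compatibility of wild inertia with a purely inseparable base change. I would prove it via the Galois-group identification above, checking that ramification is preserved because residue field and value group change only radicially. A secondary technical point is justifying that the completed residue field of $y_\ell$ is really $\widehat{k(y)}\ell$. This needs the fact that $Y_\ell\to Y$ is a homeomorphism with purely inseparable residue field extensions, which follows from $\ell$ being finite over $k$.
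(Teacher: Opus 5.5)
Your proof is correct and uses the same square of completed residue fields as the paper: $\widehat{k(y_\ell)}=\widehat{k(y)}\cdot\widehat{k(x_\ell)}$, with separable horizontal sides, purely inseparable vertical sides, and linear disjointness.

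The two arguments differ only in how they finish. The paper applies multiplicativity of the wild degree to the two towers from $\widehat{k(x)}$ to $\widehat{k(y_\ell)}$. It then asserts that the vertical extensions are purely inseparable of the same degree and ``totally ramified'', which forces the horizontal wild degrees to agree. What that step really needs is not total ramification, since a purely inseparable extension may have an inseparable residue extension or defect. It needs that a purely inseparable extension $F'/F$ is purely wild, i.e. $n^w(F'/F)=[F':F]$. Given linear disjointness, this is equivalent to your identity $(F')^t=F^tF'$. So you avoid multiplicativity and instead prove explicitly the fact the paper uses tacitly. The paper's version is shorter; yours is more self-contained.

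Two small points to tighten:
\begin{enumerate}
\item The cleanest proof of $(F')^t=F^tF'$ is this. The residue field of $F'F^s$ is purely inseparable over that of $F^s$. So an automorphism acts trivially on one residue field if and only if it acts trivially on the other. Hence, under $\mathrm{Gal}(F'^s/F')\cong\mathrm{Gal}(F^s/F)$, the inertia groups coincide, and so do their unique pro-$p$ Sylow subgroups, the wild inertia groups. The value group plays no role here.
\item For valuations of higher rank, completeness does not imply henselianity. So ``the unique extension of the valuation'' should be replaced by a fixed chosen extension, or you should pass to henselizations. This is harmless, because the wild degree does not depend on that choice and valuations extend uniquely along purely inseparable extensions.
\end{enumerate}
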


\begin{proof}
 We set $x_\ell := f_\ell(y_\ell)$ and $x:=f(y)$ and examine the completed residue field extensions:
 \[
  \begin{tikzcd}[row sep=small]
   \widehat{k(y_\ell)}	\ar[rd,-]	\ar[dd,-]								\\
   										& \widehat{k(x_\ell)}	\ar[dd,-]	\\
   \widehat{k(y)}			\ar[rd,-]											\\
   										& \widehat{k(x)}.
  \end{tikzcd}
 \]
 The slanted horizontal extensions are separable and the vertical extensions are purely inseparable.
 It follows that the vertical extensions are totally ramified of the same degree.
 Since the wild degree is multiplicative in composite extensions, the wild degree of the two slanted horizontal extensions needs to be the same.
 This is a reformulation of the statement
 \[
  \wild_{f_\ell}(y_\ell) = \wild_f(y)
 \]
\end{proof}

\begin{corollary} \label{wild_degree_constant_separably_closed}
 Let $f\colon  Y \to X$ be an étale morphism of smooth curves over a separably closed affinoid field $(k,k^+)$ of finite rank.
 Then the wild degree $\wild_f$ is constant on sufficiently small constructible segments of~$Y$.
\end{corollary}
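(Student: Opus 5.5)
The plan is to reduce the separably closed case to the algebraically closed case, \cref{alg_closed_higher_rank}, by passing to a finite purely inseparable extension of the base field and transporting constructible segments along the base change. Let $\bar{k}$ be an algebraic closure of~$k$. Since $k$ is separably closed, $\bar{k}/k$ is purely inseparable, and it is the filtered union of finite purely inseparable subextensions $\ell/k$. Each such $\ell$ carries a unique extension of the valuation, so $(\ell,\ell^+)$ is an affinoid field of the same finite rank. Moreover, the projection $Y_\ell \to Y$ is a homeomorphism: a purely inseparable extension of valued fields admits a unique extension of valuations, hence each point has exactly one preimage. The same holds for $Y_{\bar{k}} \to Y$.

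First I would check that, under $\pi\colon Y_{\bar{k}} \to Y$, constructible segments of~$Y$ correspond exactly to constructible segments of~$Y_{\bar{k}}$. This is essentially built into the definitions. The sheaves $\overline{\cP}_Y$ and $\overline{\cP}_Y^+$, and hence $\overline{\Gamma}_Y$, are defined by descent from the base change to~$\bar{k}$. Also, $|\bar{k}^\times|=|k^\times|^\Q$, so $\overline{\Gamma}$ is unchanged, and extended valuative graphs are defined via local isomorphisms with intervals in $\P^1_{\bar{k}}$. Consequently, a monoided valuation subspace of $Y^{\overline{\times}}$ is an (extended) valuative graph if and only if its homeomorphic preimage in $Y_{\bar{k}}^{\overline{\times}}$ is one. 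Under this correspondence, intervals $[a,b]_s$ correspond to intervals, and divisorial, classical and unibranch endpoints are preserved; for divisoriality this is \cref{divisorial_finite_ext}. So $\pi$ induces a bijection between constructible segments at $y$ and at its unique preimage $\bar{y}$.

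Next I would transfer the wild degree. By \cref{purely_inseparable_wild}, $\wild_{f_\ell}(y_\ell)=\wild_f(y)$ for every finite purely inseparable $\ell/k$. To reach $\bar{k}$ I would use that the wild degree only depends on completed residue fields. The point is that $\widehat{k(\bar y)}$ is the completion of the union of the $\widehat{k(y_\ell)}$, and similarly for~$x$. Since maximal tame extensions and degrees of finite separable extensions are insensitive to completion and to direct limits of purely inseparable extensions, the argument of \cref{purely_inseparable_wild} goes through with $\bar{k}$ in place of $\ell$: the vertical extensions are purely inseparable, so they do not affect tame closures up to the purely inseparable part. This gives $\wild_{f_{\bar{k}}}(\bar{y})=\wild_f(y)$.

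Now I conclude. Given $y\in Y$ and a segment $I$ at~$y$, take the corresponding segment $\bar I=\pi^{-1}(I)$ at~$\bar y$. Then \cref{alg_closed_higher_rank}, applied to the étale morphism of smooth curves $f_{\bar k}$ over the algebraically closed analytic field $(\bar k,\bar k^+)$ of finite rank, gives a constructible subsegment $\bar J\subseteq\bar I$ at $\bar y$ on which $\wild_{f_{\bar k}}$ is constant. Its image $J=\pi(\bar J)$ is a constructible subsegment of $I$ at $y$, and $\wild_f$ is constant on $J$ by the previous step. One caveat: if $(k,k^+)$ is discrete rather than analytic, \cref{alg_closed_higher_rank} is stated only for analytic fields. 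I expect this case to be handled analogously, or to be excluded by context, by treating the trivial-valuation fiber directly, since there all points are of rank one over a trivially valued field and the wild degree is~$1$.

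The main obstacle I expect is the second step: verifying carefully that constructible segments, especially those at unibranch and infinitesimal points, are compatible with the purely inseparable base change. If passing all the way to $\bar k$ causes trouble, the fallback is to work with a single large enough finite $\ell$. This is possible because the segment data and the relevant finite étale covers (via \cref{red_fet}) are of finite presentation, so they are already defined over some finite purely inseparable~$\ell$.
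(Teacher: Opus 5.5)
Your proposal is correct and follows the paper's proof. Base change along the purely inseparable extension $\bar k/k$ leaves the underlying spaces and the constructible segments unchanged, \cref{purely_inseparable_wild} identifies the wild degrees, and \cref{alg_closed_higher_rank} concludes; your limit argument for the possibly infinite extension $\bar k/k$ only spells out what the paper leaves implicit.

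One correction concerns your aside on the discrete case. Like yours, the paper's proof goes through \cref{alg_closed_higher_rank} and so tacitly assumes $(k,k^+)$ is analytic; discrete base fields only enter later, via analytification (\cref{wild_locus_curves}). Your suggested justification for the discrete case is false: in equal characteristic~$p$, the étale Artin--Schreier cover $z^p-z=T$ of $\A^1_k$ is wild, with wild degree $p$, at the horizontally divisorial point given by the valuation at infinity.
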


\begin{proof}
 Taking the base change along a purely inseparable extension of $(k,k^+)$ does not change the underlying topological spaces of~$X$ and~$Y$.
 In particular, the base change preserves constructible segments.
 Therefore, the corollary follows by combing \cref{alg_closed_higher_rank} with \cref{purely_inseparable_wild}.
\end{proof}

\begin{corollary} \label{basechange_to_bar_k_stable}
 Let $(k,k^+)$ be an affinoid field with separable closure $(\bar{k},\bar{k}^+)$.
 We consider an étale morphism
 \[
  f \colon  Y \longrightarrow X
 \]
 of smooth curves over $(k,k^+)$ and its basechange
 \[
  \bar{f} \colon  Y_{\bar{k}} \longrightarrow X_{\bar{k}}
 \]
 to $(\bar{k},\bar{k}^+)$.
 For a point $\bar{y} \in Y_{\bar{k}}$ with image $y \in Y$ we assume that
 \[
  \wild_f(y) = \wild_{\bar{f}}(\bar{y}).
 \]
 Then the wild degree of~$f$ is constant on all sufficiently small segments at~$y$.
\end{corollary}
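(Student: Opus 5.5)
The plan is to squeeze $\wild_f$ between the constant value $\wild_f(y)=\wild_{\bar f}(\bar y)$ and the bounds coming from upper semicontinuity and from the separably closed case. Write $\pi\colon Y_{\bar k}\to Y$ and $\pi_X\colon X_{\bar k}\to X$ for the projections, and let $I$ be a segment at~$y$. We may assume $X$ and $Y$ are quasi-compact, so $f$ and $\bar f$ are defined over some finite separable extension $k'/k$. Hence the relevant part of the base change factors through a finite étale cover $Y_{k'}\to Y$, and all residue field extensions along $\pi$ are algebraic.

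The first ingredient is an inequality valid at every point. For $\bar z\in Y_{\bar k}$ over $z\in Y$, with images $\bar w\in X_{\bar k}$ and $w\in X$, I will show
\[
 \wild_{\bar f}(\bar z)\le \wild_f(z).
\]
The completed residue field $\widehat{k(\bar z)}$ is a composite of $\widehat{k(z)}$ with an algebraic extension of $\widehat{k(w)}$, namely $\widehat{k(\bar w)}$. The maximal tame extension of the composite contains the composite of the maximal tame extensions, so $[\widehat{k(\bar z)}^t:\widehat{k(\bar w)}^t]\le[\widehat{k(z)}^t:\widehat{k(w)}^t]$. Equivalently, one can argue with tame henselizations: base change of $Y^t_{\bar z}\to X^t_{\bar z}$ can only lower the degree.

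Next I choose the segments. By \cref{wd_upper_semicont} there is an open neighbourhood $U$ of $y$ on which $\wild_f\le\wild_f(y)$. Lift $I$ to a segment $\bar I$ at $\bar y$ by the following steps. Over $Y_{k'}$ with $k'/k$ finite separable, $Y_{k'}\to Y$ is finite étale, so \cref{segment_homeomorphism} gives a subsegment $J'$ at a preimage of $y$ mapping homeomorphically onto a subsegment $J\subseteq I\cap U$ at~$y$. Passing further to $\bar k$ does not change segments locally, since the geometric sheaf of values $\overline{\Gamma}$ is already defined by descent from $\bar k$. By \cref{wild_degree_constant_separably_closed}, after shrinking we may assume $\wild_{\bar f}$ is constant on $\bar J$, and hence equal to $\wild_{\bar f}(\bar y)=\wild_f(y)$. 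Shrinking $J$ accordingly keeps it a segment at $y$ with $\pi(\bar J)=J$.

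To conclude, take $z\in J$ and its lift $\bar z\in\bar J$. Then
\[
 \wild_f(y)=\wild_{\bar f}(\bar z)\le\wild_f(z)\le\wild_f(y),
\]
so $\wild_f$ is constant on $J$. Any smaller segment at $y$ inside $J$ inherits this. The main obstacle is the lifting step: showing that a constructible segment at $y$ in $Y$ is, after shrinking, the homeomorphic image of a constructible segment at $\bar y$ in $Y_{\bar k}$ when $\bar k/k$ is infinite. I would handle this by reducing to the finite separable level $k'$ as above and applying \cref{segment_homeomorphism}(i) to the finite étale map $Y_{k'}\to Y$. I would then treat unibranch $y$ separately using the explicit form $\{y\}\cup\bigcup[a_i,b]_s$ of segments at unibranch points.
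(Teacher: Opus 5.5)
Your proof is correct and follows the paper's argument: squeeze $\wild_f$ between the upper-semicontinuity bound $\wild_f\le\wild_f(y)$ near $y$, the value $\wild_{\bar f}\equiv\wild_f(y)$ on small segments at $\bar y$ given by \cref{wild_degree_constant_separably_closed}, and the pointwise inequality $\wild_{\bar f}(\bar z)\le\wild_f(z)$. The only difference is that you lift a given segment at $y$ up to $\bar y$, whereas the paper pushes a small segment at $\bar y$ down to $Y$; both rest on the same compatibility of segments with descent from $\bar k$, which the paper also takes for granted, and your intermediate finite level $k'$ is harmless but does not touch the infinite step.

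One correction to your justification of the inequality: the containment you state points the wrong way to give an upper bound. The correct reason is $\widehat{k(\bar z)}^t=\widehat{k(z)}\cdot\widehat{k(\bar w)}^t$ together with $\widehat{k(w)}^t\subseteq\widehat{k(\bar w)}^t$, which gives $\wild_{\bar f}(\bar z)\le\wild_f(z)$ directly.
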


\begin{proof}
 By upper semicontinuity of the wild degree (\cref{wd_upper_semicont}) we have on every sufficiently small segment~$I$ at~$y$
 \begin{equation} \label{semicontinuity_interval_downstairs}
  \wild_f(y') \le \wild_f(y) \quad \forall y' \in I.
 \end{equation}
 On the other hand, on every sufficiently small segment~$\bar{I}$ at~$\bar{y}$ we have the equality
 \[
  \wild_{\bar{f}}(\bar{y}') = \wild_{\bar{f}}(\bar{y}) = \wild_f(y)	\quad \forall \bar{y}' \in \bar{I}.
 \]
 (see \cref{wild_degree_constant_separably_closed}).
 The image of such a segment~$\bar{I}$ in~$Y$ is a segment at~$y$ and if~$\bar{I}$ is small enough, its image~$I$ satisfies the inequalities~(\ref{semicontinuity_interval_downstairs}).
 In addition, for every point $\bar{y}' \in \bar{I}$ with image $y' \in I$ we have the inequality
 \[
  \wild_f(y') \ge \wild_{\bar{f}}(\bar{y}').
 \]
 This forces the inequality in~(\ref{semicontinuity_interval_downstairs}) to be an equality.
\end{proof}

\section{The wild degree for curves over a general field} \label{sect:wild_degree_general}

We continue studying the wild degree of an étale morphism $f\colon  Y \to X$ of curves over an analytic field $(k,k^+)$ of finite rank.
Roughly there are two key cases we need to understand in order to get the full picture.
The first one treats curves over algebraically closed fields and has been studied in \cref{sect:alg_closed_ramification_locus}.
The second case deals with étale morphisms on the affine line of the form
\[
 \pi \colon  \A_\ell^1 \longrightarrow \A_k^1
\]
obtained by base change along a finite separable extension $\ell/k$.
It will turn out to be handy not to completely separate these two cases, especially when studying the wild degree on unibranch points.
Consequently, this section is primarily about the affine line but we will use the geometric case in order to understand~$\pi$.
In the end we combine the results of this and the previous section to show that the wild degree is constant on sufficiently small constructible segments for any étale morphism of smooth curves.

\subsection{The wild degree on the affine line} \label{sect:affine_line}

In this subsection we study the finite étale morphism
\[
 \pi \colon  \A_\ell^1 \longrightarrow \A_k^1
\]
induced by a finite separable extension $(\ell,\ell^+)/(k,k^+)$ of analytic fields.
Our aim is to understand the ramification behaviour along the segment $[0,\infty)$ connecting the two classical points~$0$ and~$\infty$ in $\P_\ell^1$.
The special fiber of the segment $[0,\infty)$ is parametrized by the set of bounded rangers~$\cR^b_{\lvert \ell^\times \rvert}$ together with~$0$.
Explicitly, an element $y \in [0,\infty)$ in the special fiber is represented by a valuation of $\ell[T]$ of the form
\[
 y(\sum_i a_i T^i) = \max_i \lvert a_i \rvert \gamma^i,
\]
where~$\gamma$ is either~$0$ or an element of a totally ordered group~$\Gamma$ containing~$\lvert \ell^\times \rvert$.
The pair $(\Gamma,\gamma)$ is a bounded ranger and its equivalence class $r_\gamma \in \cR_{\lvert \ell^\times \rvert}$ is called the \emph{radius} of~$y$.
We also denote it by~$r_y$.

If we start with a radius $r \in \cR^b_{\lvert \ell^\times \rvert} \cup \{0\}$, we denote the corresponding point on the segment $[0,\infty)$ by~$y_r$.
For $r > 0$, it corresponds to a valuation ring of $\ell(T)$, for which we write $\ell(T)_r^+$.
The completed residue field extension of~$y_r$ is identified with the completion of
\[
 (\ell(X),\ell(X)_r^+) / (\ell,\ell^+).
\]
The same reasoning applies to~$k$ instead of~$\ell$ and we use the same notation $(k(X),k(X)_r^+$) for the affinoid field corresponding to the ranger~$r$.
Since $\cR_{\lvert \ell^\times \rvert} = \cR_{\lvert k^\times \rvert}$, the segment $[0,\infty)$ in $\A_\ell^1$ is naturally homeomorphic to the segment $[0,\infty)$ in $\A_k^1$.
We tend to use~$x_r$ (as opposed to~$y_r$) when we refer to the point in $[0,\infty) \subseteq \A_k^1$ corresponding to~$r$.

In order to understand the wild degree of~$\pi$ on $[0,\infty)$, we need to study the wild degree of the completed residue field extensions
\[
 (k(y_r),k(y_r)^+)^\wedge/(k(x_r),k(x_r)^+)^\wedge
\]
for $r \in \cR^b_{\lvert k^\times \rvert} \cup \{0\}$.
The case $r=0$ is easy as $k(x_0) = k$ and $k(y_0) = \ell$.
For $r > 0$ the residue field extension is the completion of
\[
 (\ell(T),\ell(T)_r^+)/(k(T),k(T)_r^+).
\]
Since~$\ell/k$ is separable, the wild degree of the above extension is the same as the wild degree of its completion.
So in a nutshell we need to study the wild degree of the extension $(\ell(T),\ell(T)_r^+)/(k(T),k(T)_r^+)$.

%
%

\begin{lemma} \label{wild_monomial_valuation}
 Let $(\ell,\ell^+)/(k,k^+)$ be a finite separable extension of affinoid fields and $r \in \cR_{\lvert k^\times \rvert} \setminus \{\infty\}$.
 Then
 \[
  \wild\left((\ell(T),\ell(T)_r^+)/(k(T),k(T)_r^+)\right) = \wild\left((\ell,\ell^+)/(k,k^+)\right).
 \]
\end{lemma}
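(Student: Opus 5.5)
The idea is to compare both wild degrees through the maximal tame extension $k^t$ of $k$, taken with respect to a fixed extension of the valuation to $\bar k$. Write $K = (k(T),k(T)_r^+)$ and $L = (\ell(T),\ell(T)_r^+)$. By definition $\wild(L/K) = [L K^t : K^t]$, and the same wild degree is obtained after replacing $K$ by any intermediate tame extension $K'/K$ (with $L$ replaced by $LK'$). I will use $K' = k^t(T)$ with the monomial valuation of radius $r$, namely $\sum_i a_i T^i \mapsto \max_i |a_i|\, r^i$. This valuation is well defined, since for algebraic extensions of the constant field the monomial valuation extends canonically.

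\emph{Step 1: $K'/K$ is tame.} It suffices to treat a finite tame subextension $k'/k$ and show that $k'(T)_r/k(T)_r$ is tame. Since $k'/k$ is defectless, I pick a valuation-orthogonal $k$-basis of $k'$ whose values represent $|k'^\times|/|k^\times|$ and whose reductions give a basis of $k'^\succ/k^\succ$. A direct computation with the formula $\max_i |a_i|\, r^i$ shows that this basis stays orthogonal for the monomial valuation on $k'(T)$. Hence $k'(T)_r/k(T)_r$ is defectless. Its graded reduction is the base change of the graded reduction of $k'/k$ along the graded reduction of $k(T)_r$, which is $\widetilde{k}[\tilde T]$ or a Laurent variant with $\tilde T$ of degree $r$. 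So the ramification index divides $e(k'/k)$ and the residue extension is separable. Both are tame.

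\emph{Step 2: reduce to degree $p$ steps.} We have $LK' = (\ell k^t)(T)$ and $[(\ell k^t)(T) : k^t(T)] = [\ell k^t : k^t] = \wild(\ell/k)$. The extension $\ell k^t/k^t$ is a tower of degree $p$ extensions $M'/M$, where each $M$ has no nontrivial tame extensions, so each $M'/M$ is wild. It therefore remains to prove that each step $M'(T)_r/M(T)_r$ is again wild. Once every step is wild, the whole tower $LK'/K'$ is purely wild, and $\wild(L/K)=[LK':K']=\wild(\ell/k)$. Here I use that a degree $p$ extension is tame if and only if it is unramified with separable residue extension of degree $p$, and that a tower of wild degree $p$ steps has wild degree equal to its degree because the wild degree is multiplicative (\cref{wd_multiplicative}).

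\emph{Step 3: the main obstacle.} I must rule out that some wild step $M'/M$ becomes unramified after adjoining $T$. I split into the three possibilities for $M'/M$. If $M'/M$ is immediate (defect $p$), then $M'(T)_r/M(T)_r$ has the same graded reduction on both sides, so it cannot be unramified of degree $p$. If $e(M'/M)=p$, or the residue extension is purely inseparable of degree $p$, then the graded reduction of $M'(T)_r$ over that of $M(T)_r$ is obtained by base change from the graded reduction of $M'/M$. It is therefore still a purely inseparable graded extension of degree $p$, a radical $p$-th root of a homogeneous element. I expect the delicate point to be this base-change claim when $r$ is rationally dependent on the value group, since then homogeneous degrees of $\tilde T$ and of $M'$ can interact. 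If it causes trouble, I would instead use the Frobenius-type criterion directly: a homogeneous element with a $p$-th root not already available in degree $0$ cannot become a $p$-th power after adjoining $\tilde T$, because $\tilde T$ is transcendental. In every case the extension is not unramified, hence wild, which finishes the argument.
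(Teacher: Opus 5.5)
Your proposal is correct. Its outer structure matches the paper's: the paper's one-line ``without loss of generality $(k,k^+)$ is tame closed'', followed by splitting $\ell/k$ into wild degree-$p$ steps, is what your Steps~1--2 carry out explicitly.

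The routes differ in the key step, namely showing that a wild degree-$p$ step $M'/M$ stays wild over $M(T)_r$.
\begin{itemize}
\item The paper uses the distance ranger $d_{\alp/M}=\inf_{c\in M}|\alp-c|$. It proves from the monomial formula that $d_{\alp/M}=d_{\alp/M(T)}$, and likewise for $\alp^p$. It then applies one criterion: the extension has defect iff $d_{\alp/M}$ is non-principal, and if it is defectless with $\alp$ orthogonal, it is wild iff $\alp^p$ is not orthogonal. This covers the immediate, ramified and inseparable-residue cases, and every position of $r$, at once.
\item You use graded reduction with a case split. In the immediate case the graded reductions coincide, so $e=f=1$ and the defect is $p$. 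In the defectless case the graded reduction is generated by a $p$-th root of a homogeneous element.
\end{itemize}
Your route costs a case distinction but yields more information. The worry you raise about rationally dependent $r$ does not arise. Exactly as in your Step~1, the monomial formula gives $|\sum_j \alp^j g_j|=\max_j|\alp^j g_j|$ for $g_j\in M(T)$, so $1,\alp,\dots,\alp^{p-1}$ stays orthogonal over $M(T)_r$. Hence the graded reduction of $M'(T)_r$ is $\widetilde{M(T)_r}[\tilde\alp]$, of degree $p$ with $\tilde\alp^p\in\widetilde{M}$, so it is purely inseparable. This degree count already absorbs the case $e_{\ell/k}=p=f_{\ell(t)/k(t)}$ that the paper singles out, and it makes your Frobenius fallback unnecessary.

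Two details should be added; the paper is silent on them as well.
\begin{itemize}
\item In Step~1, an orthogonal basis with $[k':k]=ef$ need not exist for a tame $k'/k$ when $k$ is not henselian, e.g.\ a quadratic extension in which the valuation splits, lying in $k^h$. Either pass to $k^h$ first, or note that the wild inertia group of $\overline{K}/K$ restricts into that of $\bar k/k$, which gives $k^t\subseteq K^t$ with no computation. That the chosen valuation is monomial on $\bar k(T)$ follows from a norm argument with the multiplicative Gauss norm.
\item In the immediate case, ``degree $p$ and not unramified, hence wild'' needs the monomial valuation on $M(T)$ to extend uniquely to $M'(T)$. Otherwise the step could lie inside the henselization of $M(T)_r$ and be tame. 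Uniqueness holds because $M\supseteq k^t$ is henselian, by the same norm argument.
\end{itemize}
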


\begin{proof}
Without loss of generality we may assume that $(k,k^+)$ is tame closed and complete. In this case the wild degree of the extension $\ell/k$ equals $[\ell:k]$. Furthermore, we can decompose $\ell/k$ into finitely many extensions $\ell_{i+1}/\ell_i$ of degree~$p$. What we need to show is that the induced extension $(\ell_{i+1}(T),\ell_{i+1}(T)_r^+)/(\ell_i(T),\ell_i(T)_r^+)$ is wild. We are thus reduced to the case where $\ell/k$ is a wild extension of degree~$p$ and we want to show that $\ell(T)/k(T)$ is wild.

We will use the following two simple facts for an element $\alp\in \ell\setminus k$:

1. {\it Orthogonality of the monomial valuation on $k(t)$: the rangers $d_{\alp/k}=\inf_{c\in k}|\alp-c|$ and $d_{\alp/k(t)}=\inf_{c\in k(t)}|\alp-c|$ are equal.}
Indeed, assume to the contrary that $d_{\alp/k}>d_{\alp/k(t)}$ and choose $h\in k(t)$ with $|\alp-h|<d_{\alp/k}$ in $\ell(t)$.
Find a presentation $h=f/g$ with $f=\sum_{i=0}^m f_it^i$ and $g=\sum_{i=0}^n g_it^i$, and fix $i$ such that $|g|=|g_it^i|$.
Then $$|\alp-f_i/g_i|\cdot |g_it^i|=|(\alp g_i-f_i)t^i|\le|\alp g-f|<|g|d_{\alp/k}=|g_it^i|d_{\alp/k}$$ and hence $|\alp-f_i/g_i|<d_{\alp/k}$, which is absurd.

2. {\it Orthogonality for a valued extension $\ell/k$ of degree $p$: the extension is defectless if and only if the ranger $d_{\alp/k}$ is principal (the infimum is attained).
Furthermore, assume that $\ell/k$ is defectless and $\alp$ is $k$-orthogonal (that is, $d_{\alp/k}=|\alp|$).
Then $\ell/k$ is wild if and only if $\alp^p$ is not $k$-orthogonal.}
This claim is straightforwardly checked by studying the cases when $e_{\ell/k}=p$ and $f_{\ell/k}=p$, and we skip the details.
It seems a bit artificial, but allows not to distinguish these cases further and to also completely ignore the question what is $r$, while otherwise we would have to separately study a few cases, including the case when $|\alp|,r\notin|k^\times|$ and $|\alp|^p,r^p\in|k^\times|$, and hence $e_{\ell/k}=p=f_{\ell(t)/k(t)}$.

Now we are ready to prove that $\ell(t)/k(t)$ is wild. By claim 1 we know that $d_{\alp/k}=d_{\alp/k(t)}$ (and the same for $\alp^p$), hence either both rangers are non-principal and then both $\ell/k$ and $\ell(t)/k(t)$ are defect wild extensions by claim 2, or we can translate $\alp$ by an element of $k$ making it $k$-orthogonal.
In this case, $\alp^p$ is not $k$-orthogonal by claim 2, hence it is also not $k(t)$-orthogonal and the extension $\ell(t)/k(t)$ is wild.

\end{proof}

\begin{proposition} \label{wild_affine_line}
 Let $(\ell,\ell^+)/(k,k^+)$ be a finite separable extension of affinoid fields.
 We write
 \[
  \pi \colon  \A_\ell^1 \longrightarrow \A_k^1.
 \]
 for the natural projection.
 Then the wild degree $\wild_{\pi}$ is constant on the special fiber of $[0,\infty) \subseteq \A_\ell^1$ of value $\wild(\ell/k)$.
\end{proposition}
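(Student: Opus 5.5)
The plan is to reduce the statement pointwise to \cref{wild_monomial_valuation}. Let $y$ be a point of the special fiber of $[0,\infty) \subseteq \A_\ell^1$ and set $x = \pi(y)$. As explained at the start of \cref{sect:affine_line}, $y$ is of the form $y_r$ for a unique $r \in \cR^b_{\lvert \ell^\times \rvert} \cup \{0\}$. Since $\cR_{\lvert \ell^\times \rvert} = \cR_{\lvert k^\times \rvert}$ and the segments $[0,\infty)$ in $\A_\ell^1$ and $\A_k^1$ are identified via~$\pi$, the image is $x = x_r$. So $\wild_\pi(y)$ is the wild degree of the completed residue field extension $\widehat{k(y_r)}/\widehat{k(x_r)}$, and we treat the two cases $r = 0$ and $r > 0$ separately.

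\emph{Case $r = 0$.} Here $k(x_0) = k$ and $k(y_0) = \ell$, with the valuations induced from the point~$s$ of the special fiber, i.e.\ by $k^+$ and $\ell^+$. So $\wild_\pi(y_0) = \wild\left((\ell,\ell^+)/(k,k^+)\right)$ directly. One has to check that completion does not change the wild degree. This holds because the maximal tame extensions of a valued field and of its henselization, or completion, compose compatibly with the finite separable extension $\ell/k$.

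\emph{Case $r > 0$.} The completed residue field extension is the completion of $(\ell(T),\ell(T)_r^+)/(k(T),k(T)_r^+)$. Since $\ell(T) = \ell \otimes_k k(T)$ is a field that is finite separable over $k(T)$, the same remark about completions shows that its wild degree equals that of the uncompleted extension. Now $r$ is bounded, so in particular $r \neq \infty$, and \cref{wild_monomial_valuation} applies. It gives
\[
 \wild\left((\ell(T),\ell(T)_r^+)/(k(T),k(T)_r^+)\right) = \wild\left((\ell,\ell^+)/(k,k^+)\right).
\]
Hence $\wild_\pi(y_r) = \wild(\ell/k)$ for every such~$r$, which is the claim.

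The only real subtlety, besides \cref{wild_monomial_valuation} itself which we may assume, is the invariance of the wild degree under completion. We would justify it as follows. A finite separable extension $L/K$ of valued fields has $\widehat{L} = L \otimes_K \widehat{K}$ as a field, with the chosen extension of the valuation. The tame closure of $\widehat{K}$ is obtained as the completion-compatible compositum $K^t \widehat{K}$, so the degrees $[L^t:K^t]$ and $[\widehat{L}^t:\widehat{K}^t]$ agree. This is the same remark already used implicitly in the text preceding \cref{wild_monomial_valuation}.
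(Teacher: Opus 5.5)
Your proposal matches the paper's proof: you parametrize the special fiber of $[0,\infty)$ by $r \in \cR^b_{\lvert k^\times \rvert} \cup \{0\}$, treat $r=0$ directly, and reduce $r>0$ to \cref{wild_monomial_valuation} after noting that completion does not change the wild degree. One small correction to your side remark: $L \otimes_K \widehat{K}$ is in general a product of the completions of $L$ at the various extensions of the valuation, so it is a field only when that extension is unique; the relevant object is the factor $\widehat{L} = L\widehat{K}$, and with this adjustment the argument goes through unchanged.
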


\begin{proof}
 As we have discussed at the beginning of the subsection, the points on the special fiber of $[0,\infty)$ are parametrized by $\cR^b_{\lvert k^\times \rvert} \cup \{0\}$.
 For a given radius $r \in \cR^b_{\lvert k^\times \rvert} \cup \{0\}$ the wild degree at the corresponding point $y_r \in [0,\infty)$ is the wild degree of $(k(y_r),k(y_r)^+)/(k(x_r),k(x_r)^+)$.
 This extension equals $(\ell,\ell^+)/(k,k^+)$ in case $r=0$ and otherwise its completion is identified with the completion of $(\ell(T),\ell(T)_r^+)/(k(T),k(T)_r^+)$.
 Therefore, the result follows from \cref{wild_monomial_valuation}.
\end{proof}

\subsection{Wild degree for non-endpoints} \label{sect:non-endpoints}

Using the results from \cref{sect:affine_line} we can now treat general curves.
However, we can not yet say anything about the behaviour of the wild degree at endpoints.

\begin{proposition} \label{etale_morphism_to_A1}
 Let~$X$ be a smooth curve over an affinoid field $(k,k^+)$ and $x \in X$ a point in the special fiber.
 Then there is an open neighborhood $U \subseteq X$ of~$x$ and an étale morphism $\varphi\colon U \to \A_k^1$.

 If~$x$ is not an endpoint, $\varphi$ can be arranged in such a way that $\varphi(x) \subseteq [0,\infty)$.
\end{proposition}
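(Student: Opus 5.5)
Locally at~$x$, $X$ is étale over the affine line, so the first part should follow from the local structure of smooth morphisms of adic spaces. Since~$X$ is smooth of relative dimension one over $(k,k^+)$, \cite[Corollary~1.6.10]{Hu96} (the adic analogue of the local structure theorem for smooth morphisms) gives an open affinoid neighborhood $U=\Spa(B,B^+)$ of~$x$ and an étale morphism $U \to \D_k$ or $U \to \A^1_k$. Concretely, I would present~$B$ as a quotient of a Tate algebra and choose an element $T \in B$ whose differential $dT$ generates $\Omega_{B/k}$ near~$x$. The map $T \colon U \to \A^1_k$ is then unramified and flat at~$x$, since both spaces are smooth of dimension one. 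Shrinking~$U$, it becomes étale. This first part should be routine.

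For the second part, the point~$x$ lies over some $s \in S$ and is neither classical nor unibranch. I will therefore modify~$T$ so that~$x$ maps into the ray $[0,\infty)$ over~$s$. The image $\varphi(x) \in \A^1_k$ is a point over~$s$, and by \cref{divisorial_etale} and \cref{etale_finite_fibers} it is of the same type as~$x$ (divisorial, cut or infinitesimal), because the completed residue field extension is finite. Points of the affine line over~$s$ of these types are determined by a ``center'' and a radius. The center is a disc whose residual field of definition may be a nontrivial extension of~$k$, which is exactly why the statement allows $\varphi(x) \subseteq [0,\infty)$ only after modification. The plan is to compose~$\varphi$ with an étale endomorphism of (an open of) $\A^1_k$ that moves $\varphi(x)$ onto the ray $[0,\infty)$.

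If $\varphi(x)$ is the point of a disc $\{|T-c|\le r\}$ with $c\in k$, this is just the translation $T\mapsto T-c$. In general the disc is centered at some $c\in \bar k$ that can be chosen separable over~$k$, because the minimal radius over~$s$ is positive for non-endpoints and separable elements are dense. I then replace~$T$ by $P(T)$, where~$P$ is the minimal polynomial of~$c$ over~$k$. On a small disc around~$c$, $P$ is a product of the factor $T-c$ and factors that are units of constant absolute value. By the standard Newton polygon computation, $|P|$ restricted to a disc centered at~$c$ is monomial in the radius once the radius is small compared to the distances from~$c$ to its conjugates. So if we first shrink to radii below that threshold, or increase the radius by choosing a better center, the image of~$x$ under $P\circ\varphi$ is the point of radius $r\cdot\prod_{c'\ne c}|c-c'|$ centered at~$0$, which lies in $[0,\infty)$. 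The map $P$ is étale wherever $P'\neq 0$. This holds near $\varphi(x)$ because~$c$ is separable and the disc avoids the zeros of~$P'$ once the radius is small; for a divisorial point with large radius one can instead use the freedom to pick~$c$ anywhere in the disc.

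I expect the main obstacle to be making this work uniformly for higher rank~$s$ and infinitesimal points. There the radius is a ranger, and one has to check that the Newton polygon argument and the étaleness of~$P$ persist at points that are not rank-one. I would handle this by passing to the rank-one generalization and using that $P$ is étale on an open set containing it, together with openness of the étale locus.
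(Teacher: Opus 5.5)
Your argument is correct, but it takes a different route from the paper's. The paper never post-composes a coordinate. It chooses the parameter $t\in k(x)$ to be monomial from the outset, using valuation theory:
\begin{itemize}
\item for a cut or infinitesimal point, it takes $t$ with $|t(x)|$ outside the value group of $k$;
\item for a divisorial point, it takes $t$ with $|t(x)|=1$ such that $k(x)^\succ/k(t)^\succ$ is separable algebraic.
\end{itemize}
In either case $|(t-c)(x)|\ge|t(x)|$ for all $c\in\bar k$ is immediate, so $\varphi(x)\in[0,\infty)$ without any discussion of discs, centers or images of points. The price is that one must (implicitly) make sure this special $t$ is still a separable parameter.

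Your construction separates the two requirements. An arbitrary \'etale coordinate gives the first part. A separable polynomial correction $P$ then preserves \'etaleness and moves the point onto the ray. This shows that any local coordinate can be corrected by a polynomial over $k$. It costs you the disc and ranger description of points of $\A^1_{\bar k}$ over $s$, density of $k^{\mathrm{sep}}$ in $\bar k$ for the valuation of $s$, and the behaviour of polynomial maps on discs.

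Three sub-steps in your write-up should be repaired, though none breaks the approach.

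\emph{No radius threshold is needed.} Your suggestion to ``shrink to radii below that threshold'' cannot be carried out, since the radius of $\varphi(x)$ is fixed. Fortunately it is unnecessary. Because $P(c)=0$, the polynomial $P$ maps $D(c,r)$ onto the disc $D(0,R)$ with $R=\prod_{c'}\max(r,|c-c'|)$, the product running over the conjugates $c'$ of $c$. This holds for every $r$, and the Gauss point goes to the Gauss point. So the image always lies on $[0,\infty)$.

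\emph{\'Etaleness of $P$ at $\varphi(x)$ is automatic.} The point $\varphi(x)$ is not classical, so its support is the generic point of $\A^1_k$. Since $P$ is separable, $P'\neq 0$, and $P$ is \'etale on the open set $\{P'\neq0\}$, which contains $\varphi(x)$. The disc does not need to avoid the zeros of $P'$. Your remark about re-choosing $c$ for large divisorial discs would not achieve that anyway.

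\emph{For infinitesimal points the issue is direction, not rank.} The center $c$ must lie in the residue class into which the point points. In ranger language, if the radius is $r^-$ then $|c-c_0|<r$. Then the reduction $\tilde P$ of $P$ at $\eta_{c,r}$ is nonconstant with $\tilde P(\tilde 0)=\tilde 0$. So the image is the infinitesimal point at $\eta_{0,R}$ pointing toward $0$, which lies on $[0,\infty)$. With a center in a different residue class, the image points into another residue class and is off the ray. Passing to the rank-one generalization, as you propose, forgets exactly this information. That step only settles \'etaleness, which was never the difficulty.
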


\begin{proof}
 Remember from \cite[\S~2.1]{CTT16} that a parameter of $k(x)$ is an element $t \in k(x)$ such that $\widehat{k(x)}/\widehat{k(t)}$ is finite separable.
 Choosing a parameter $t \in k(x)$ defines an étale morphism of a neighborhood of~$x$ to $\A^1_k$.
 Let $x_0 \in \A^1_k$ be the image of~$x$.
 If we want~$x_0$ to lie on $[0,\infty)$, $t$ has to be a monomial parameter, i.e. $\lvert t(x)\rvert$ has to be minimal among all values $\lvert (t-c)(x) \rvert$ for $c \in \bar{k}$.
 This has to be ensured in case $x$ is not an endpoint meaning that $x$ is an edge or divisorial point.
 If~$x$ is an edge point, we choose~$t$ such that $\lvert t(x) \rvert$ is not contained in $\lvert k^\times \rvert$.
 In case~$x$ is divisorial we take any~$t$ of absolute value~$1$ such that $k(x)^\succ/k(t)^\succ$ is separable algebraic.
 In both cases~$t$ is automatically monomial and thus induces a morphism to $\A^1_k$ that maps~$x$ to $[0,\infty)$.
\end{proof}

\begin{proposition} \label{wild_degree_nonendpoints}
 Let $f \colon  Y \to X$ be an étale morphism of smooth curves over an affinoid field $(k,k^+)$.
 For every point $y \in Y$ that is not an endpoint the wild degree $\wild_f$ is constant on every sufficiently small constructible segment~$I$ at~$y$.
\end{proposition}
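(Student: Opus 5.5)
We reduce to the affine line and then combine \cref{wild_affine_line} with the geometric case. Let $x = f(y)$. Since $f$ is étale, $x$ is not an endpoint either: divisorial and edge points are detected by $F$ and $E$, which are unchanged under finite extensions. By \cref{etale_morphism_to_A1} we choose an open neighborhood $U$ of $x$ and an étale map $\varphi\colon U \to \A^1_k$ with $\varphi(x) \in [0,\infty)$, given by a monomial parameter $t$. Shrinking, the composite $\varphi\circ f$ is étale on a neighborhood of $y$ and sends $y$ into $[0,\infty)$. By \cref{sandwich}, combined with \cref{segment_homeomorphism}, it is enough to prove the statement for $\varphi\circ f$ and for $\varphi$ separately. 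Indeed, constancy for the composite and for $\varphi$ on the image segment would give constancy for $f$. The applications of \cref{segment_homeomorphism} shrink $I$ so that $I \to f(I) \to \varphi f(I)$ are homeomorphisms. Thus we reduce to an étale map $g\colon Y \to \A^1_k$ and a non-endpoint $y$ with $g(y) \in [0,\infty)$.

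For such $g$ we pass to the separable closure. Let $\ell \subseteq \bar k$ be a finite separable extension such that the component of $y$ relevant to $\bar y$ is defined over $\ell$. Then $g$ factors locally through $\pi\colon \A^1_\ell \to \A^1_k$ via a map $g_\ell\colon Y_\ell \to \A^1_\ell$, with the point $y_\ell$ over $y$ mapping into $[0,\infty)$. More concretely, the $\bar k$-base change of $Y$ near $y$ splits into geometrically connected pieces, and $\ell$ is chosen large enough that the chosen piece and the residue fields are split. By \cref{wd_multiplicative} we have $\wild_{\pi\circ g_\ell} = \wild_{g_\ell}\cdot \wild_\pi$. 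By \cref{wild_affine_line}, $\wild_\pi$ is constant equal to $\wild(\ell/k)$ on the special fiber of $[0,\infty)$, and every sufficiently small segment at $g(y)$ through a non-endpoint lies on $[0,\infty)$ after using monomiality. The remaining task is to show that $\wild_{g_\ell}$ is constant near $y_\ell$. We compare $g_\ell$ with its base change to $\bar k$. By \cref{wild_degree_constant_separably_closed}, the base change $\bar g$ has locally constant wild degree on small segments at $\bar y$. By \cref{basechange_to_bar_k_stable}, it then suffices to arrange $\wild_{g_\ell}(y_\ell) = \wild_{\bar g}(\bar y)$.

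This equality is the step I expect to be the main obstacle. To obtain it, we enlarge $\ell$ so that the separable part of the discrepancy between $\widehat{k(y_\ell)}$ and $\widehat{k(\bar y)}$ is absorbed. The key ingredient is again \cref{wild_monomial_valuation}, applied to the monomial parameter. It says that base field extensions are \emph{orthogonal} to the monomial valuation, so the wild degree of $\widehat{k(y_\ell)}/\widehat{\ell(T)}$ stabilizes once $\ell$ contains enough of $\bar k$. Finiteness of wild degrees allows us to choose a single finite $\ell$ at which this stabilization happens. The points $y$ being divisorial or edge points is what makes this work. For such points, the residue field is, up to a finite extension, $k(T)$ with a monomial valuation. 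Consequently, after a finite separable base change, the extension $\widehat{k(y_\ell)}/\widehat{\ell(T)}$ becomes geometric: its wild part is not further affected by extending $\ell$ to $\bar k$. Finally, the segments at $y_\ell$ map homeomorphically onto segments at $y$ for small enough segments, by \cref{segment_homeomorphism}. We then assemble the pieces using \cref{sandwich}.
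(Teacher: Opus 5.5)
Your architecture is the paper's. You reduce to the affine line via \cref{etale_morphism_to_A1}, pass to a finite separable $\ell$ at which the wild degree already equals the geometric one, and then combine \cref{basechange_to_bar_k_stable}, \cref{wild_affine_line} and \cref{sandwich}. Composing $f$ with $\varphi$ first is a harmless simplification of the paper's three-map diagram. Note, though, that $g$ does not factor through $\pi$. What you actually have is the base-change square $\pi\circ g_\ell = g\circ\pi_Y$ with $\pi_Y\colon Y_\ell\to Y$. The final application of \cref{sandwich} must therefore be to $g\circ\pi_Y$ on a segment $I_\ell$ at $y_\ell$, which yields constancy of $\wild_g$ on $\pi_Y(I_\ell)$.

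The genuine gap is exactly the step you flag: the existence of a finite $\ell$ with $\wild_{g_\ell}(y_\ell)=\wild_{\bar g}(\bar y)$. \cref{wild_monomial_valuation} does not give this. It identifies $\wild(\ell(T)/k(T))$ with $\wild(\ell/k)$, and says nothing about how the wild degree of $\widehat{k(y_\ell)}/\widehat{\ell(T)}$ at a finite level compares with the one over $\bar k$. What is true for elementary reasons is that $\ell\mapsto\wild_{g_\ell}(y_\ell)$ is non-increasing, hence eventually constant. But that the stable value equals $\wild_{\bar g}(\bar y)$ is a continuity statement: $\widehat{k(\bar y)}$ is the completion of the union of the fields $\widehat{k(y_\ell)}$, and one needs the wild degree to be compatible with this limit. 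Your sentence that the wild part ``is not further affected by extending $\ell$ to $\bar k$'' simply asserts this. The paper obtains it from continuity of the tame topology, \cite[Proposition~5.3]{HueAd}.

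This step has nothing to do with $y$ being divisorial or an edge point; the paper reuses it verbatim at arbitrary points, including endpoints, in \cref{wild_degree_constant_segments}. The non-endpoint hypothesis is needed only to place $\varphi(x)$ on $[0,\infty)$, so that \cref{wild_affine_line} applies to $\pi$.
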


\begin{proof}
 Let~$x$ be the image of~$y$ in~$X$.
 Using \cref{etale_morphism_to_A1} we construct an étale morphism
 \[
  \varphi \colon  U \to \A_k^1
 \]
 from an open neighborhood~$U$ of~$x$ to the affine line mapping~$x$ to a point on the segment $[0,\infty)$.
 Without loss of generality we can replace~$X$ by~$U$ and henceforth assume that~$\varphi$ is defined on all of~$X$.

 We consider the basechanges
 \[
  Y_{\bar{k}} \overset{\bar{f}}{\longrightarrow} X_{\bar{k}} \overset{\bar{\varphi}}{\longrightarrow} \A^1_{\bar{k}}
 \]
 of~$f$ and~$\varphi$ to an algebraic closure~$\bar{k}$ of~$k$ and a point $\bar{y} \in Y_{\bar{k}}$ above~$y$.
 Then $\wild_{\bar{f}}(\bar{y})$ is possibly smaller than $\wild_f(y)$ and similarly for $\wild_{\varphi}$.
 It can be deduced from continuity of the tame topology (\cite[Proposition~5.3]{HueAd}) that there is a finite extension $(\ell,\ell^+)/(k,k^+)$ such that the base changes
 \[
  Y_\ell \overset{f_\ell}{\longrightarrow} X_\ell \overset{\varphi_\ell}{\longrightarrow} \A^1_\ell
 \]
 have wild degree $\wild_{\bar{f}}(\bar{y})$ at the image~$y_\ell$ of~$\bar{y}$ and $\wild_{\bar{\varphi}}(\bar{x})$ at the image~$x_\ell$ of~$\bar{y}$, respectively.
 Then by \cref{basechange_to_bar_k_stable} the wild degrees of~$f_\ell$ and~$\varphi_\ell$ are constant on every sufficiently small segment at~$y_\ell$ and~$x_\ell$, respectively.

 Combining the previous two paragraphs of the proof we obtain a diagram of étale morphisms
  \begin{equation} \label{diagram_Y_X_A}
  \begin{tikzcd}
   Y_\ell	\ar[r,"f_\ell"]	\ar[d,"\pi_X"]	& X_\ell	\ar[r,"\varphi_\ell"]	\ar[d,"\pi_X"]		& \A_\ell^1	\ar[d,"\pi_\A"]	\\
   Y		\ar[r,"f"]						& X			\ar[r,"\varphi"]							& \A_k^1.	
  \end{tikzcd}
 \end{equation}
 Now we start with a constructible segment $I \subseteq Y$ at~$y$.
 Using \cref{segment_homeomorphism} we choose a constructible segment $I_\ell \subseteq Y_\ell$ at~$y_\ell$ mapping into~$I$ such that
 \[
  I_\ell \longrightarrow \pi_\A \circ \varphi_\ell \circ f_\ell(I_\ell)
 \]
 is a homeomorphism.

 Then the map from~$I_\ell$ onto its image in any of the spaces appearing in diagram~(\ref{diagram_Y_X_A}) is a homeomorphism.
 Shrinking~$I_\ell$ further we may assume that the wild degrees
 \[
  \wild_{f_\ell},~\wild_{\varphi_\ell},~\wild_{\pi_\A}
 \]
 are constant on~$I_\ell$, respectively on its images in~$X_\ell$ and~$\A^1_\ell$ (see \cref{basechange_to_bar_k_stable} and \cref{wild_affine_line}).
 By \cref{sandwich} the wild degree
 \[
  \wild_{\pi_\A \circ \varphi_\ell \circ f_\ell} = \wild_{\varphi \circ f \circ \circ \pi_Y}
 \]
 is constant on~$I_\ell$.

 Now the same \cref{sandwich} implies that the wild degree $\wild_f$ is constant on the image of~$I_\ell$ in~$Y$.
\end{proof}

\subsection{The affine line revisited}

Let $(\ell,\ell^+)/(k,k^+)$ be a finite separable extension of affinoid fields.
We come back to the study of the base extension morphism
\[
 \pi_\ell \colon  \A^1_\ell \longrightarrow \A^1_k
\]
on the affine line.
We know from \cref{wild_degree_nonendpoints} that the wild degree is well behaved at non-endpoints.
In this section we explain how to treat the endpoints.

For an element $a \in k$ we consider the automorphism
\[
 t_a \colon  \A^1_k \longrightarrow \A^1_k
\]
that is given by sending the variable~$T$ to $T-a$.
On a classical point~$x$ defined by setting $T=b$ in $\bar{k}$ it acts by sending~$b$ to $b+a$.
So this automorphism~$t_a$ is a translation by~$a$.
For a general point $x \in \A^1_k$ we often write $x + a$ when we mean $t_a(x)$.

\begin{lemma} \label{translation_by_a}
 Let $x \in \A^1_k$ be a point of radius $r \in \cR^b_{|k^\times|} \cup \{0\}$ and $a \in k$.
 Then $x = x + a$ if and only if $|a| \le r$.
\end{lemma}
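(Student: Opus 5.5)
We need to show: for $x$ of radius $r$ on $[0,\infty)$ (or more generally a point with radius $r$, meaning $\inf_{c}|T-c|$ evaluated appropriately), $t_a(x)=x$ iff $|a|\le r$. The plan is to compare the two valuations $x$ and $x+a$ on $k[T]$ directly, using the monomial description of points on the segment $[0,\infty)$.

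First, for $x$ on $[0,\infty)$ I would use the description given above: $x(\sum_i b_iT^i)=\max_i|b_i|\gamma^i$, where $(\Gamma,\gamma)$ represents $r$. The point $x+a=t_a(x)$ is the valuation $g\mapsto x(g(T-a))$, or $g(T+a)$ depending on the convention; the sign does not matter since $|a|=|-a|$. The two valuations agree iff they agree on all polynomials, and since $t_a$ is a ring automorphism it suffices to check whether $x(T-a)=x(T)$ propagates. I would split into the two cases.

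If $|a|\le r$, meaning $|a|\le\gamma$ in $\Gamma$ (and $a=0$ when $r=0$), then expand $g(T-a)=\sum_i b_i(T-a)^i=\sum_j c_jT^j$ with $c_j=\sum_{i\ge j}b_i\binom{i}{j}(-a)^{i-j}$. The ultrametric inequality gives $|c_j|\gamma^j\le\max_{i\ge j}|b_i||a|^{i-j}\gamma^j\le\max_i|b_i|\gamma^i$, so $x(g(T-a))\le x(g)$. Applying the same bound to $t_{-a}$ yields the reverse inequality, hence equality and $x+a=x$. Since this is a valuation on $k[T]$ (and hence on $k(T)$), the adic point, including its support and the valuation ring, coincides.

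If $|a|>r$, it suffices to evaluate on $T$: $x(T-a)=\max(\gamma,|a|)=|a|$, whereas $(x+a)$ evaluated on the polynomial that pulls back to $T$ gives $x(T)=\gamma<|a|$. More cleanly, $(x+a)(T+a)=x(T)=\gamma$ versus $x(T+a)=|a|$, so the points differ.

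For a general point $x\in\A^1_k$ of radius $r$ that is not on $[0,\infty)$, I would reduce to the previous case. Pass to $\bar k$, where translations commute with the base change. There the radius $r=\inf_{c\in\bar k}|T-c|(x)$, and $x$ lies on a segment $[c,\infty)$, or is described as a limit of discs if it is unibranch. Translating by $c$ moves it onto $[0,\infty)$, and translations commute, so the two cases above apply. The main obstacle is the unibranch or non-monomial case, where the radius is a non-principal ranger. There I would argue via discs: $x$ is fixed by $t_a$ iff every disc $D(c_i,r_i)$ in the nested family defining $x$ is preserved, which happens iff $|a|\le r_i$ for all $i$, i.e. iff $|a|\le\inf r_i=r$ in $\cR$. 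Finally, I would descend from $\bar k$ to $k$, using that $a\in k$ and that the map $\A^1_{\bar k}\to\A^1_k$ is $t_a$-equivariant, together with the fact that the radius is defined over $\bar k$ anyway.
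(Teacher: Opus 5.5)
Your computation for points of $[0,\infty)$ is correct, and it is the paper's own computation. For $|a|>r$ the paper also separates $x$ and $x+a$ by evaluating $T-c$. For $|a|\le r$ it uses that a $T_c$-monomial valuation is invariant under translation by elements of size at most its radius, plus an approximation argument for the remaining points. Passing to $\bar k$ and recentering at $c\in\bar k$ is likewise what the paper does implicitly.

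The gap is your final descent from $\bar k$ to $k$. Equivariance of $\pi\colon \A^1_{\bar k}\to\A^1_k$ under $t_a$ only transports the implication $\bar x+a=\bar x\Rightarrow x+a=x$, i.e. the ``if'' direction. For ``only if'' you need $\bar x+a\neq\bar x$ to force $x+a\neq x$. But $x+a=x$ only says that $\bar x+a$ and $\bar x$ lie in the same fibre of $\pi$, i.e. $\bar x+a=\sigma(\bar x)$ for some $\sigma\in\mathrm{Gal}(\bar k/k)$. Nothing in your argument excludes $\sigma\neq\mathrm{id}$, and noting that the radius is defined over $\bar k$ does not address this.

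This cannot be patched: for $k$ not separably closed and $x$ not centred at an element of $k$, the ``only if'' direction is false. Take $k=\mathbb{F}_p((t))$ and let $x\in\A^1_k$ be the classical point with support $(T^p-T-t^{-1})$, an irreducible Artin--Schreier polynomial, so $r=0$.
\begin{itemize}
\item Since $(T-1)^p-(T-1)=T^p-T$, the substitution $T\mapsto T-1$ preserves this ideal.
\item Since $k$ is complete, the valuation on $k[T]/(T^p-T-t^{-1})$ is the unique extension of that of $k$.
\end{itemize}
Hence $x+1=x$ although $|1|>0$. Computing the radius with $c\in k$ instead breaks the ``if'' direction: for the classical point with support $(T^2-p)$ in $\A^1_{\Q_p}$ and $a=p$, the $k$-radius is $|p|^{1/2}\ge|p|$, yet $x+p\neq x$ because the supports differ.

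The paper's proof has exactly the same lacuna. The function $T-c$ with $c\in\bar k$ does not live on $\A^1_k$, so the identity $|(T-c)(x+a)|=|a|$ only shows $\bar x+a\neq\bar x$ for a chosen lift $\bar x$. What is actually true is the statement for points of $\A^1_{\bar k}$. Equivalently, it holds for separably closed $k$, or for $x$ whose centre can be chosen in $k$, e.g. $x\in[0,\infty)$, where your first part is already a complete proof.

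That weaker form is all \cref{wild_degree_A_1_infinite} needs. The lifts $\bar x+c'$ lie in pairwise distinct residue discs of the disc of $\bar y$, so every neighbourhood of $y$ contains some $x+c'$. This holds whether or not $x+c'\neq x$ in $\A^1_\ell$.
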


\begin{proof}
 Let $(\bar{k},\bar{k}^+)$ be an algebraic closure of $(k,k^+)$.
 Suppose that $|a| > r$.
 The radius of~$x$ being less than~$|a|$ implies that there is $c \in \bar{k}$ such that
 \[
  |(T-c)(x)| < |a|.
 \]
 For the image point $t_a(x)$ we get
 \[
  |(T-c)(x+a)| = |(T-a-c)(x)| = |((T-c)-a)(x)| = |a|,
 \]
 whence $x+a \ne x$.

 Now let~$x$ be a $T_c$-monomial point (with respect to the variable $T_c := T -c$ for some $c \in \bar{k}$) of radius greater than~$|a|$.
 Then $x+a$ is the following valuation:
 \[
  |(\sum_i b_i (T-c)^i)(x+a)| = |(\sum_i b_i (T-a-c)^i)(x)| = |(\sum_i b_i (T-c)^i)(x)|.
 \]
 So $x+a = x$.
 Every point of radius $r \le |a|$ can be approximated by monomial points of radius greater than~$|a|$.
 Hence the result follows by continuity.
\end{proof}

\begin{lemma} \label{wild_degree_A_1_infinite}
 Let $(\ell,\ell^+)/(k,k^+)$ be a finite separable extension of affinoid fields.
 Suppose that the value group $|k^\times|$ is non-discrete and the residue field~$\tilde{k}$ infinite.
 We consider the projection
 \[
  \pi_\ell \colon  \A^1_\ell \longrightarrow \A^1_k
 \]
 and a point $x \in \A^1_\ell$.
 Then the wild degree $\wild_{\pi_\ell}$ is constant on every sufficiently small constructible segment at~$x$.
\end{lemma}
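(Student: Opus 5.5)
By \cref{wild_degree_nonendpoints} we only have to treat endpoints $x$, i.e.\ classical, unibranch and infinitesimal points. The plan is to move $x$ by a translation $t_a$ with $a\in k$ so that the segment at $x$ can be compared with the segment $[0,\infty)$, where \cref{wild_affine_line} gives full control. Since $t_a$ is defined over $k$, it commutes with $\pi_\ell$. Hence $\wild_{\pi_\ell}(z+a)=\wild_{\pi_\ell}(z)$ for all $z$, and $t_a$ maps constructible segments to constructible segments.

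\textbf{Step 1: reduce to a ray towards $\infty$.} Let $x$ be an endpoint of radius $r=r_x$ over $s$. A sufficiently small segment $I$ at $x$ is locally contained in the ray from $x$ to the Gauß point of some disc $D$ containing $x$. Away from $x$ itself, every point of $I$ is a non-endpoint (divisorial or cut), with radius strictly larger than $r$. For classical $x$ the ray is $[x,\infty)$. For unibranch or infinitesimal $x$ it is the union $\{x\}\cup\bigcup_i[a_i,b]_s$ described before \cref{segment_homeomorphism}. By upper semicontinuity (\cref{wd_upper_semicont}), after shrinking $I$ we have $\wild_{\pi_\ell}\le\wild_{\pi_\ell}(x)$ on $I$. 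So it suffices to find, for every $z\in I\setminus\{x\}$ close to $x$, an element $a\in k$ with $z+a\in[0,\infty)$ and $x+a$ ``close'' to $[0,\infty)$. Then \cref{wild_affine_line} yields $\wild_{\pi_\ell}(z)=\wild(\ell/k)$.

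\textbf{Step 2: use density hypotheses to find $k$-rational centres.} This is where the assumptions that $|k^\times|$ is non-discrete and $\tilde k$ is infinite enter. Take $z\in I$ divisorial over $s$; by \cref{segment_divisorial} such points are dense in $I$. Its disc $D_z$ has radius $\rho>r$ and is defined over $\bar k$. I expect that a divisorial disc of radius $\rho\in|k^\times|^\Q$ over an infinite residue field contains a point whose centre lies in $k$, or at least can be approximated by such discs within any prescribed neighbourhood of $x$. The same density lets us choose $\rho\in|k^\times|$ itself. Translating by such a centre $a$ puts $z$ on $[0,\infty)$, and by \cref{translation_by_a} all points of $I$ of radius at least $|a-c|$ are fixed. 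From \cref{wild_affine_line} we then get $\wild_{\pi_\ell}=\wild(\ell/k)$ on a cofinal family of divisorial points of $I$ accumulating at $x$.

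\textbf{Step 3: conclude constancy.} On $I\setminus\{x\}$ the wild degree is locally constant by \cref{wild_degree_nonendpoints}, and this set is connected for small $I$. Hence it equals $\wild(\ell/k)$ there. It remains to compare with the value at $x$. If $x$ is classical, $\widehat{k(x)}$ is a finite extension of $k$ and its wild degree over the image is at most $\wild(\ell/k)$. For unibranch and infinitesimal $x$, I would argue that $\widehat{k(x)}$ is a limit of the residue fields of the points on the ray, and use continuity of the tame topology (\cite[Proposition~5.3]{HueAd}) to get $\wild_{\pi_\ell}(x)\le\wild(\ell/k)$. Combined with upper semicontinuity, which gives the reverse inequality near $x$, this yields constancy on $I$.

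\textbf{Main obstacle.} I expect the hardest part to be Step 2 at unibranch points, whose nested discs have no common $k$-rational (or even $\bar k$-rational) centre. There the translation has to be chosen depending on $z$. The argument must check that the required $a\in k$ exists for all $z$ sufficiently close to $x$, not merely for a sequence of points; this is precisely where non-discreteness and infiniteness of $\tilde k$ should be needed.
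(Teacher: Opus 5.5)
There is a genuine gap at the centre of your strategy, in Steps 2 and 3. Moving a point $z$ of the segment onto $[0,\infty)$ by some $a\in k$ requires the disc $D_z$ to contain the $k$-rational point $-a$. Near an endpoint that is not $k$-rational this fails, however large $\tilde k$ is and however dense $|k^\times|$ is.

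Concretely, take $k$ henselian and $\ell=k(\alpha)$ with $\wild(\ell/k)>1$ (this is compatible with the hypotheses on $|k^\times|$ and $\tilde k$). Let $x$ be the $\ell$-rational classical point $T=\alpha$ of $\A^1_\ell$.
\begin{itemize}
\item For $\rho$ below $\inf_{c\in k}|\alpha-c|$, which is positive by Krasner's lemma, the neighbourhood $D(\alpha,\rho)$ of $x$ contains no point of $k$. Hence it contains no $k$-translate of any point of $[0,\infty)$, and nothing can be approximated by such points.
\item The target value is also wrong. Here $k(x)=\ell$ and the image of $x$ has residue field $k(\alpha)=\ell$, so $\wild_{\pi_\ell}(x)=1$. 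By \cref{wd_upper_semicont}, $\wild_{\pi_\ell}=1$ on every small segment at $x$.
\item Your Step 2, however, would produce points accumulating at $x$ with wild degree $\wild(\ell/k)>1$.
\end{itemize}
So \cref{wild_affine_line} does not govern the wild degree near endpoints, and the claim in Step 3 that it equals $\wild(\ell/k)$ on $I\setminus\{x\}$ is false in general. (Minor point: infinitesimal points are edge points, already covered by \cref{wild_degree_nonendpoints}; the endpoints are the classical and unibranch points.)

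The missing idea is to use translations to \emph{multiply the endpoint} rather than to move the segment, and to argue by contradiction without ever computing the wild degree. The argument runs as follows.
\begin{enumerate}
\item By upper semicontinuity, $\wild_{\pi_\ell}\le\wild_{\pi_\ell}(x)$ near $x$.
\item If constancy fails on all small segments, \cref{wild_degree_nonendpoints} gives a divisorial point $y$ on such a segment with radius $r_y>r_x$, $x\in D_y$, and $\wild_{\pi_\ell}(y)<\wild_{\pi_\ell}(x)$. Non-discreteness of $|k^\times|$ lets you take $r_y\in|k^\times|$.
\item Fix $c\in k$ with $|c|=r_y$. Since $\tilde k$ is infinite, there are infinitely many $c'\in k$ with $|c'|=r_y$ whose residues $c'/c$ are pairwise distinct.
\item By \cref{translation_by_a}, the translates $x+c'$ all lie in $D_y$, and no two of them lie in a common disc of radius $<r_y$.
\item Each satisfies $\wild_{\pi_\ell}(x+c')=\wild_{\pi_\ell}(x)$, because $t_{c'}$ is defined over $k$.
\item A basis of neighbourhoods of $y$ is obtained by removing finitely many discs of radius $<r_y$ from $D_y$. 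Hence every neighbourhood of $y$ contains some $x+c'$, contradicting upper semicontinuity at $y$.
\end{enumerate}
This is exactly where both hypotheses on $k$ are used, and the argument does not care whether $x$ is $k$-rational.
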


\begin{proof}
 By \cref{wild_degree_nonendpoints} we know the statement for non-endpoints.
 Suppose it is not true for some endpoint $x$.
 By semicontinuity we find an open neighborhood~$U$ of~$x$ such that
 \[
  \wild_{\pi_\ell}|_U \le \wild_{\pi_\ell}(x).
 \]
 In particular, the wild degree is at most $\wild_{\pi_\ell}(x)$ on every sufficiently small constructible segment~$I$ at~$x$.
 Since~$x$ is an endpoint, every constructible segment at~$x$ contains monomial points of radius greater than the radius~$r_x$ of~$x$ that correspond to discs containing~$x$.
 In this way we find a monomial point $y \in \A^1_\ell$ of radius~$r_y > r_x$ corresponding to a disc containing~$x$ such that
 \[
  \wild_{\pi_\ell}(y) < \wild_{\pi_\ell}(x).
 \]
 As~$|k^\times|$ is non-discrete, we can modify~$y$ to achieve that $r_y \in |k^\times|$.
 For an element $c \in k$ of valuation $|c| = r_y$ we consider the translation~$t_c$ by~$c$ on $\A^1_\ell$ and $\A^1_k$:
 \[
  \begin{tikzcd}
   \A^1_\ell	\ar[r,"t_c"]	\ar[d,"\pi_\ell"]	& \A^1_\ell	\ar[d,"\pi_\ell"]	\\
   \A^1_k		\ar[r,"t_c"]					& \A^1_k.
  \end{tikzcd}
 \]
 By \cref{translation_by_a} the point $t_c(x)$ is different from~$x$.
 But the wild degree $\wild_{\pi_\ell}$ is the same at~$x$ and $x+c$ since~$t_c$ is defined over~$k$.
 By our assumption on the cardinality of~$\tilde{k}$ there are infinitely many elements~$c' \in k$ of valuation $|c'| = r_y$ such that the residue classes of $c'/c$ in $\tilde{k}$ are pairwise different.
 According to \cref{translation_by_a} this implies that the points $x + c'$ for $c'$ as above are pairwise different.
 More precisely, for two such elements $c',c'' \in k$ we have
 \[
  d(x+c',x+c'') = |c-c'| = r_y.
 \]
 \cref{translation_by_a} also implies that all the points $x+c'$ are contained in the disc corresponding to~$y$.
 However, no two points $x+c'$ and $x+c''$ are contained in a disc of smaller radius.
 A basis of open neighborhoods for~$y$ is given by the sets obtained by removing from the disc corresponding to~$y$ finitely many discs of smaller radius (compare \cite[Theorem~4.6.5]{adiccurves}).

 By the above considerations the points $x+c'$ are not all contained in a finite union of discs of radius less than~$r_y$.
 Therefore, every open neighborhood of~$y$ contains one of the points $x+c'$.
 But then
 \[
  \wild_{\pi_\ell}(x+c') > \wild_{\pi_\ell}(y)
 \]
 contradicts upper semicontinuity of the wild degree at~$y$.
\end{proof}

\begin{proposition} \label{wild_degree_A_1}
 Let $(\ell,\ell^+)/(k,k^+)$ be a finite separable extension of affinoid fields.
 We consider the projection
 \[
  \pi_\ell \colon  \A^1_\ell \longrightarrow \A^1_k
 \]
 and a point $x \in \A^1_\ell$.
 Then the wild degree $\wild_{\pi_\ell}$ is constant on every sufficiently small constructible segment at~$x$.
\end{proposition}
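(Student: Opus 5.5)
Since Lemma~\ref{wild_degree_A_1_infinite} already handles the case where $|k^\times|$ is non-discrete and $\tilde{k}$ is infinite, the plan is to reduce the general case to it by a suitable base field extension of $(k,k^+)$. Such an extension should not change the wild degree of $\pi_\ell$, and it should not change the local structure of segments.

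\emph{Step 1: choose the base change.} Take a transcendental extension $K = k(u)^\wedge$ with the Gauß valuation of some ranger radius. Its residue field is $\tilde{k}(\bar u)$, which is infinite. Then further adjoin a suitable root of a transcendental of non-rational radius, or more simply take the completion of $k(u,v)$ with $|v|$ irrational relative to $|k^\times|$. Next adjoin $v^{1/n}$ for all $n$ prime to $p$, so that the value group becomes non-discrete. These steps are tame or purely transcendental, hence separable.

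Set $L := \ell \otimes_k K$. This is a field, since $\ell/k$ is separable and $K/k$ is regular: $k$ is algebraically closed in $K$ after the tame radical adjunctions, or else we pass to the relevant field factor. By Lemma~\ref{wild_monomial_valuation}, applied to each monomial adjunction, we get $\wild(L/K) = \wild(\ell/k)$. More precisely, the wild degree at a point $y$ of $\A^1_L$ should equal the wild degree at its image in $\A^1_\ell$. To see this, compare the residue field extensions as in the diagram in the proof of Lemma~\ref{purely_inseparable_wild}. The vertical extensions are now generated by monomially valued transcendentals and tame radicals, so Lemma~\ref{wild_monomial_valuation} (applied over the residue fields $k(x)$, $\ell(y)$ in place of $k$, $\ell$) together with multiplicativity (Lemma~\ref{wd_multiplicative}) gives
\[
\wild_{\pi_L}(y_L) = \wild_{\pi_\ell}(y).
\]

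\emph{Step 2: lift points and segments.} For a point $x \in \A^1_\ell$, choose a lift $x_L \in \A^1_L$. The natural choice is to extend the valuation of $\ell(T)$ to $L(T)$ monomially in $u, v$, or equivalently to take the point of the fiber over the base change of $x$. Given a constructible segment $I$ at $x$, we use Proposition~\ref{segment_homeomorphism}-type arguments for the projection $\A^1_L \to \A^1_\ell$, which behaves like a base change. The aim is a constructible segment $I_L$ at $x_L$ mapping homeomorphically onto a constructible subsegment of $I$ at $x$. Rangers of $|k^\times|$ embed into rangers of $|K^\times|$, and the map on segments is compatible with these parametrizations.

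\emph{Step 3: conclude.} Lemma~\ref{wild_degree_A_1_infinite} applies to $\pi_L$ at $x_L$, so the wild degree is constant on small subsegments of $I_L$. Transport this constancy to the image in $\A^1_\ell$ using the equality from Step 1.

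\emph{Main obstacle.} The hard part is Step 2 together with the equality of Step 1 at every point of the segment, including infinitesimal and unibranch endpoints. In particular, we must ensure the base change induces a homeomorphism of small segments rather than fattening them: the fibers of $\A^1_L \to \A^1_\ell$ are huge. To handle this, I would realize the lift as a section defined by Gauß-type extension, which is continuous and preserves radii. I would verify this on the explicit description of segments as sets $[a,b]_s$ of rangers and, for unibranch points, as unions of discs. If this turns out to be delicate, a fallback is to treat the discrete or finite-residue-field cases directly by the translation argument. In that argument, the infinite family of translates $x+c'$ is replaced by translates with $c'$ taken in a tame extension of $k$, where wild degree invariance still holds by Lemma~\ref{wd_multiplicative}.
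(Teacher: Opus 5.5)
\textbf{There is a genuine gap in Steps 1 and 2: the transcendental base change you choose breaks both transfer steps, and the Gau\ss{}-type section does not resolve the obstacle you flag.} Reducing to \cref{wild_degree_A_1_infinite} by a base change is the right target.

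\emph{Step 1 fails for general lifts.} The equality $\wild_{\pi_L}(y_L)=\wild_{\pi_\ell}(y)$ is false for general $y_L$ over $y$. Points in the (huge) fibres of $\A^1_L\to\A^1_\ell$ need not have residue fields monomial over $k(y)$, and such transcendental extensions can absorb wildness. Here is a concrete example.
\begin{itemize}
\item[$\cdot$] Let $\ch k=p$ and let $\ell=k(\alpha)$ be a wild Artin--Schreier extension with $\alpha^p-\alpha=a$.
\item[$\cdot$] Let $K=\widehat{k(u)}$ with the Gau\ss{} valuation.
\item[$\cdot$] Let $y_L$ be the disc of radius $\delta$ around $u+\epsilon\alpha$, where $\epsilon\in k$, $|\epsilon\alpha|<1$ and $\delta<|\epsilon|$.
\end{itemize}
Since $u$ is residually transcendental, $y_L$ lies over the Gau\ss{} point $y$ of $\A^1_\ell$, where $\wild_{\pi_\ell}(y)=p$ by \cref{wild_monomial_valuation}. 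But $h=(T-u)/\epsilon\in k(x_L)$ satisfies $|(\alpha-h)(y_L)|=\delta/|\epsilon|<1$. So $\alpha-h$ is the small root of $X^p-X-(a-h^p+h)$ and lies in the henselization of $k(x_L)$. Hence $\pi_L$ is tame at $y_L$. Transporting constancy from a segment $J_L$ therefore requires control of \emph{every} point of $J_L$, not just of chosen lifts.

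\emph{Step 2 fails because the rangers change.} Since $|v|\notin|k^\times|^\Q$, the restriction $\cR_{|K^\times|}\to\cR_{|k^\times|}$ collapses each triple $\gamma^-,\gamma,\gamma^+$ with $\gamma\in|K^\times|\setminus|k^\times|^\Q$ to a single cut ranger $\rho$. For a classical point $x$, every constructible segment at $x_L$ contains such triples, so none maps homeomorphically onto its image. Nor is any section continuous at $\rho$: whichever of $\gamma^-,\gamma,\gamma^+$ you take as $s(\rho)$, the preimage of $[\gamma,\infty]$ or of $[-\infty,\gamma]$ is $[\rho,\infty]$ or $[-\infty,\rho]$, which is not open in $\cR_{|k^\times|}$. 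For the same reason there is no natural embedding $\cR_{|k^\times|}\hookrightarrow\cR_{|K^\times|}$, contrary to what Step 2 asserts.

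\textbf{The missing idea is to use an \emph{algebraic} tame extension $(k',k'^+)/(k,k^+)$, as the paper does.} Adjoin prime-to-$p$ roots of a pseudouniformizer and pass to an infinite unramified extension. It must be infinite if $k^\succ$ is finite, which is presumably why you reached for $u$. With $\ell'=k'\ell$, both of your problems disappear:
\begin{itemize}
\item[$\cdot$] For \emph{every} $y'\in\A^1_{\ell'}$ over $y$, the extensions $k(y')/k(y)$ and $k(x')/k(x)$ are tame. So \cref{wd_multiplicative} gives $\wild_{\pi_{\ell'}}(y')=\wild_{\pi_\ell}(y)$ with no choice of lift.
\item[$\cdot$] Since $|k'^\times|^\Q=|k^\times|^\Q$, a ray $[x_0',\infty)$ over a ray $[x_0,\infty)\supseteq I$ maps homeomorphically onto it. The preimage of $I$ is then a constructible segment at a lift of $x$, and \cref{wild_degree_A_1_infinite} applies directly.
\end{itemize}
Your fallback gestures at this, but it should be the main argument, and the translation trick need not be redone.

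A transcendental route could perhaps be salvaged, but only with more work. You would have to keep $|K^\times|^\Q=|k^\times|^\Q$, i.e.\ use just a Gau\ss{} variable of radius $1$. You would also have to check, ray by ray (including rays from non-rational classical and unibranch points), that on the base-changed ray each residue field is monomial over its image.
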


\begin{proof}
 We can find a tame extension $(k',k'^+)/(k,k^+)$ such that~$k'^\succ$ is infinite and the value group $|k'^\times|$ is not discrete.
 Setting $\ell' = k'\ell$ (in some algebraic closure) we obtain a diagram
 \[
  \begin{tikzcd}
   \A^1_{\ell'}	\ar[r]	\ar[d,"\pi_{\ell'}"]	& \A^1_\ell	\ar[d,"\pi_\ell"]	\\
   \A^1_{k'}	\ar[r]							& \A^1_k.
  \end{tikzcd}
 \]
 Let~$I$ be a constructible segment at~$x$.
 It is contained in some ray of the form $[x_0,\infty)$.
 Choosing a preimage $x'_0 \in \A^1_{\ell'}$ of~$x_0$ we obtain a ray $[x'_0,\infty) \subseteq \A^1_{\ell'}$ mapping homeomorphically to $[x_0,\infty)$.
 Let $I' \subseteq [x'_0,\infty)$ be the preimage of~$I$.

 For every point $y' \in \A^1_{\ell'}$ mapping to $y \in \A^1_\ell$ we have an equality of wild degrees
 \[
  \wild_{\pi_{\ell'}}(y') = \wild_{\pi_\ell}(y).
 \]
 Using \cref{wild_degree_A_1_infinite} we find a constructible subsegment $J' \subseteq I'$ at~$x'$ where the wild degree $\wild_{\pi_{\ell'}}$ is constant.
 Then $\wild_{\pi_\ell}$ is constant on the image of~$J'$ in~$\A^1_\ell$.
\end{proof}

\subsection{The general case}

Having understood the wild degree on the affine line also for endpoints, we can repeat the reasoning in \cref{sect:non-endpoints} to treat arbitrary points on curves.

\begin{theorem} \label{wild_degree_constant_segments}
 Let $(k,k^+)$ be an affinoid field of finite rank and $f \colon  Y \to X$ an étale morphism of smooth curves over $(k,k^+)$.
 Then the wild degree $\wild_f$ is constant on sufficiently small constructible segments of~$Y$.
\end{theorem}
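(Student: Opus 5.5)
The plan is to repeat the proof of \cref{wild_degree_nonendpoints} verbatim, with \cref{wild_affine_line} replaced by \cref{wild_degree_A_1}, which now covers every point of $\A^1_\ell$, endpoints included. Fix $y \in Y$ over $s \in S$, set $x = f(y)$, and let $I$ be a constructible segment at~$y$. By the first part of \cref{etale_morphism_to_A1} we shrink $X$ to an open neighborhood of~$x$ carrying an étale morphism $\varphi\colon X \to \A^1_k$. We drop the second part of that proposition: no monomiality of~$\varphi(x)$ is needed, because \cref{wild_degree_A_1} holds at arbitrary points of~$\A^1_\ell$.

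Next we choose a finite separable extension $(\ell,\ell^+)/(k,k^+)$, via continuity of the tame topology \cite[Proposition~5.3]{HueAd} as before. It should have the following property. For a point $\bar y$ of $Y_{\bar k}$ over~$y$ with images $y_\ell$ and $x_\ell$, the wild degrees of $f_\ell$ at~$y_\ell$ and of $\varphi_\ell$ at~$x_\ell$ must already equal the geometric ones. \cref{basechange_to_bar_k_stable} then tells us that $\wild_{f_\ell}$ and $\wild_{\varphi_\ell}$ are constant on all sufficiently small segments at $y_\ell$ and~$x_\ell$. That corollary rests on \cref{wild_degree_constant_separably_closed}, which holds for all points, so endpoints cause no trouble here. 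We then consider the diagram (\ref{diagram_Y_X_A}). Using \cref{segment_homeomorphism}~(i),(ii) we pick a constructible segment $I_\ell$ at~$y_\ell$ with image inside~$I$ such that $I_\ell$ maps homeomorphically onto its image in each of the six spaces. We then shrink $I_\ell$ until the following three wild degrees are constant: $\wild_{f_\ell}$ on~$I_\ell$, $\wild_{\varphi_\ell}$ on $f_\ell(I_\ell)$, and $\wild_{\pi_\A}$ on $\varphi_\ell f_\ell(I_\ell)$. The last condition is \cref{wild_degree_A_1} applied at the point $\varphi_\ell(x_\ell)$. Finitely many shrinkings are allowed, since constructible subsegments at a point are cofinal; this uses \cref{segment_homeomorphism} to transport subsegments along the homeomorphisms. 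Two applications of \cref{sandwich} give constancy of $\wild_{\pi_\A\circ\varphi_\ell\circ f_\ell} = \wild_{\varphi\circ f\circ\pi_Y}$ on~$I_\ell$. Two further applications of \cref{sandwich}, to $\varphi\circ f\circ\pi_Y$ and to $f\circ \pi_Y$, give constancy of $\wild_f$ on $\pi_Y(I_\ell)$.

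The last step is to conclude for $I$ itself. The set $\pi_Y(I_\ell)$ is a constructible segment at~$y$ contained in~$I$, since the image of a segment under the finite étale map $\pi_Y$ is a segment by \cref{segment_homeomorphism}. So every segment at~$y$ contains a subsegment at~$y$ on which the wild degree is constant, which is the assertion.

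The main obstacle is the unibranch case, where segments at~$y$ have the form $\{y\}\cup\bigcup_i[a_i,b]_s$. There we must make sure that the homeomorphism statements of \cref{segment_homeomorphism} and the images under $\pi_Y$ and $\varphi$ stay of this form, so that \cref{wild_degree_A_1} and \cref{sandwich} apply. I would handle this first via local uniformization, as in the proof of \cref{segment_homeomorphism}, checking that $\varphi_\ell(x_\ell)$ is again unibranch and that \cref{wild_degree_A_1} covers segments of that shape. The latter holds because the tame base change in its proof maps rays $[x_0,\infty)$ homeomorphically.
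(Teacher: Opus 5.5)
Your proposal is correct and is essentially the paper's proof: the paper reruns the argument of \cref{wild_degree_nonendpoints}, dropping the requirement that $\varphi$ send the point into $[0,\infty)$ and using \cref{wild_degree_A_1} (valid at every point of $\A^1_\ell$, endpoints included) in place of \cref{wild_affine_line}. Your extra bookkeeping only makes explicit what the paper leaves implicit: choosing $\ell/k$ separable so that $\pi_\A$ is étale, spelling out the four applications of \cref{sandwich}, and the remark on unibranch segments.
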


\begin{proof}
 We repeat the proof of \cref{wild_degree_nonendpoints} but with one deviation:
 Now we are given an arbitrary point $y \in Y$ which might be an endpoint.
 When constructing an étale morphism from an étale neighborhood~$U$ of~$y$ to $\A^1_k$, we cannot guarantee that~$y$ be mapped to the interval $[0,\infty)$.
 However, this does not bother us anymore as we now know by \cref{wild_degree_A_1} that the wild degree is constant on \emph{every} small enough constructible segment of~$\A^1_k$.
%
%
\end{proof}

Finally, let us eliminate the assumption on the rank.

\begin{corollary} \label{wild_locus_analytic_curve}
 Let $(k,k^+)$ be an analytic affinoid field of finite rank and $f \colon  Y \to X$ an étale morphism of smooth curves over $(k,k^+)$.
 Then the wild locus of~$f$ is either empty or contains a divisorial point.
\end{corollary}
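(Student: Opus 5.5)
Assume $R_f\neq\emptyset$ and pick $y\in R_f$, lying in the fiber $Y_s$ over some $s\in S=\Spa(k,k^+)$. The plan is to walk from $y$ along a small constructible segment, show $f$ stays wild there, and then pick out a divisorial point on that segment.

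First, I choose any constructible segment $I$ at $y$ over $s$. This exists by the graph structure of $Y_s$ recalled from \cite[Theorem~4.7.9]{adiccurves}: there is an interval $[a,b]_s$ through $y$ with $b$ divisorial, or, if $y$ is unibranch, a ray of the form $\{y\}\cup\bigcup_{i\ge j}[a_i,a_j]_s$. If $y$ is itself divisorial we are done immediately, so assume it is not.

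Next, by \cref{wild_degree_constant_segments}, after shrinking $I$ to a sufficiently small constructible subsegment (still at $y$), the wild degree $\wild_f$ is constant on $I$. Its value is $\wild_f(y)$, and $\wild_f(y)>1$ because $y\in R_f$. Hence every point of $I$ is wild, that is, $I\subseteq R_f$.

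Finally, $s$ lies in the image of $I$ in $S$, so \cref{segment_divisorial} gives a divisorial point $z\in I$ over $s$. Then $z\in R_f$, which proves the claim.

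The only point needing care is the phrase ``sufficiently small''. I need the shrunken segment to still lie at $y$ (contain $y$) and still lie over $s$, so that \cref{segment_divisorial} applies to the fiber over $s$. Both hold because shrinking is taken among constructible subsegments at $y$. The restriction to analytic $(k,k^+)$ matches the hypothesis under which the preceding sections were carried out; the discrete case would be handled separately via analytification. I do not expect any further obstacle, since the substance lies entirely in \cref{wild_degree_constant_segments}.
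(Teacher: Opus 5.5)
Your proposal is correct and follows the paper's proof: apply \cref{wild_degree_constant_segments} to get a constructible segment $I$ at $y$ on which $\wild_f$ is constant, hence $I\subseteq R_f$, and then \cref{segment_divisorial} produces a divisorial point in $I$. Your extra remarks on the existence of a segment at $y$ and on the shrunken segment staying at $y$ over $s$ only make explicit what the paper leaves implicit.
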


\begin{proof}
 Suppose the wild locus~$R_f$ contains a point $y \in Y$.
 By \cref{wild_degree_constant_segments} the wild degree $\wild_f$ is constant on some constructible segment $I$ at~$y$.
 Then $I \subseteq R_f$.
 Since every constructible segment contains a divisorial point by \cref{segment_divisorial}, we are done.
\end{proof}

\section{Analytification} \label{sect:analytification}

Up until now we have treated curves over an \emph{analytic} affinoid field.
However, for some applications we are also interested in curves over a discrete affinoid field.
In order to connect to what we have already achieved, we use analytification in the following way.
Let $(k,k^+)$ be an analytic affinoid field of finite rank with a pseudouniformizer~$\varpi$.
We denote by $(k,k^+)^{\disc}$ the same affinoid field but with discrete topology.
If $X/(k,k^+)^{\disc}$ is an adic space locally of finite type, we define its analytification
\[
 X^{\an} := X \times_{(k,k^+)^{\disc}} (k,k^+).
\]
By \cite[Corollary~1.2.3~i)]{Hu96}, $X^{\an}$ is locally of finite type over~$(k,k^+)$.
Moreover, by \cite[Proposition~1.4.7~iv)]{Hu96}, it is smooth if~$X$ is smooth.

\begin{lemma} \label{image_anlytification}
 The morphism $X^{\an} \to X$ is injective.
 Its image is given by all points $x \in X$ such that $|\varpi(x)|$ is cofinal in~$\lvert k(x)^\times \rvert$.
\end{lemma}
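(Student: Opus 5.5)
The question is local on $X$, so I will first reduce to the affinoid case $X = \Spa(A,A^+)$ with $A$ a finitely generated $k$-algebra and $A^+$ the integral closure of a finitely generated $k^+$-algebra, carrying the discrete topology. Then
\[
X^{\an} = \Spa(A\otimes_k k,\, B^+),
\]
where $A$ now carries the $\varpi$-adic topology defined by the ring of definition $A_0 = $ image of $A^+$ (or of the subring generated over $k^+$). Here $B^+$ is the integral closure of $A^+ \cdot k^+$, which has the same underlying ring. Concretely, a point of $X^{\an}$ is a continuous valuation $v$ on the same ring $A$, now with respect to the $\varpi$-adic topology, and with $v \le 1$ on $A^+$. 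The map $X^{\an}\to X$ sends $v$ to itself, viewed as a valuation continuous for the discrete topology. Injectivity is then immediate: the underlying valuation determines the point, and the equivalence classes agree on both sides.

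For the image, I will use the standard criterion for continuity of a valuation on a Huber ring with ring of definition $A_0$ and ideal of definition $\varpi A_0$. A valuation $v$ with $v(A_0)\le 1$ is continuous if and only if $v(\varpi)$ is cofinal in $\Gamma_v$. Strictly, the criterion asks that $v(\varpi^n a)\to 0$ for all $a\in A$; here I use that $A_0 \subseteq A^+$ is bounded by $1$ and that $A = A_0[1/\varpi]$ up to finite generation issues. So a point $x\in X$ lies in the image exactly when $|\varpi(x)|^n$ eventually drops below every element of $\Gamma_x$, i.e. when $|\varpi(x)|$ is cofinal in $|k(x)^\times|$. Here I identify the value group of $v$ with $|k(x)^\times|$, noting that $\Gamma_v$ is generated by the values on $k(\supp x)$, which is dense in $k(x)$.

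The two directions are as follows. If $x$ is in the image, continuity for the $\varpi$-adic topology forces $\{a : |a(x)|<\gamma\}$ to be open for each $\gamma$, hence to contain some $\varpi^n A_0$, and therefore $|\varpi(x)|^n<\gamma$. Conversely, cofinality together with $|A_0(x)|\le 1$ shows that $\varpi^n A_0$ lands in $\{|\cdot(x)|<\gamma\}$, which gives continuity. For general elements $a=\varpi^{-m}a_0$, continuity follows by multiplicativity.

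I expect the main obstacle to be making the identification of $X^{\an}$ with $\Spa$ of the same ring under the $\varpi$-adic topology rigorous, using \cite[Corollary~1.2.3]{Hu96}. In particular, I need to check that the fibre product over $(k,k^+)^{\disc}$ does not change the underlying ring in the finite type setting (the tensor product $A\otimes_k k=A$), and that the $+$-ring is the expected one. Once this is settled, the rest is the cofinality criterion for continuity.
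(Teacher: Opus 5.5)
There is a genuine gap in your first step. The analytification of an affinoid $X=\Spa(A,A^+)$ over $(k,k^+)^{\disc}$ is in general \emph{not} $\Spa(A,A^+)$ with $A$ retopologized $\varpi$-adically. For the $\varpi$-adic topology with ring of definition $A_0\subseteq A^+$ to be a ring topology on $A$, you need $A=A_0[1/\varpi]$, and this fails in general. Take $X=\Spa(k[T],k^+)$. Here $A^+=k^+$ and $A_0[1/\varpi]=k\neq k[T]$. Multiplication by $T$ is not continuous for the topology with neighbourhoods $\varpi^n k^+$, and $X^{\an}$ is the analytic affine line, which is not even quasi-compact.

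So this is not a ``finite generation issue''. It is exactly where your step ``for general $a=\varpi^{-m}a_0$, continuity follows by multiplicativity'' breaks down: for $a=T$ there is no such presentation. What you actually need is that $|T(x)|\le|\varpi(x)|^{-n}$ for some $n$. This is a second, separate use of the cofinality hypothesis, and your argument never makes it.

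The paper handles this by writing $A=k[T_1,\ldots,T_m]/I$ and exhibiting $X^{\an}$ as the increasing union of the affinoids $\Spa(B_n,B_n^+)$, where $B_n=k\langle\varpi^nT_1,\ldots,\varpi^nT_m\rangle/I$. Injectivity holds because $A\to B_n$ has dense image, so a continuous valuation on $B_n$ is determined by its restriction to $A$, and the pieces are nested. For the image, given $x$ with $|\varpi(x)|$ cofinal, one first chooses $n$ with $|(\varpi^nT_i)(x)|\le 1$ for all $i$. Cofinality then gives continuity on $B_n$, so $x\in\Spa(B_n,B_n^+)$.

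Your continuity criterion is fine on each such piece, since there $A_0$ contains the $\varpi^nT_i$ and hence $A=A_0[1/\varpi]$ does hold. To repair your proof, do three things:
\begin{enumerate}
\item Run your argument on the rational subsets $\{|\varpi^nT_i|\le 1\ \forall i\}$ of $X$ rather than on $X$ itself.
\item Check that $X^{\an}$ is the union of their analytifications.
\item Add the step placing each point with cofinal $|\varpi(x)|$ into one of these subsets.
\end{enumerate}
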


\begin{proof}
 We may assume that $X = \Spa(A,A^+)$ is affinoid.
 If $A = k[T_1,\ldots,T_m]/I$, then $X^{\an}$ is glued from an ascending chain of affinoids of the form $\Spa(B_n,B_n^+)$ with
 \[
  B_n = k\langle \varpi^n T_1,\ldots,\varpi^n T_m \rangle/I,
 \]
 where $\varpi \in k$ is a pseudouniformizer.
 Then
 \[
  \Spa(B_n,B_n^+) \to \Spa(A,A^+)
 \]
 is injective, as $A \to B_n$ has dense image.

 It is clear that for all points $x \in X^{\an}$ we have that $|\varpi(x)|$ is cofinal in~$\lvert k(x)^\times \rvert$.
 For the converse direction take $x \in X$ of rank greater or equal to one with $|\varpi(x)|$ cofinal in~$\lvert k(x)^\times \rvert$.
 Hence, for $n$ big enough, $|(\varpi^n T_i)(x)| \le 1$ for $i =1,\ldots,m$.
 Together with the cofinality of $|\varpi(x)|$ in~$\lvert k(x)^\times \rvert$, this shows that $x \in \Spa(B_n,B_n^+)$.
\end{proof}

Let us now specialize to the case of a curve~$C$ over $(k,k^+)^{\disc}$.
By \cref{image_anlytification} we know that we can identify the points~$x$ of~$C$ such that $\lvert \varpi(x) \rvert$ is cofinal in~$\lvert k(x)^\times \rvert$ with the points of the analytification~$C^\an$.
We call these points \emph{bounded}.
Let us now investigate the remaining points, the \emph{unbounded} ones.
We write~$\eta$ for the generic point of $\Spa(k,k^+)^{\disc}$.
It corresponds to the trivial valuation on~$k$.
For a point $x \in C$ we consider~$x_\eta$, its minimal (vertical) generalization in~$C_\eta$.
The curve~$C_\eta$ lies over the trivially valued field $(k,k)$.
Hence, by Ostrowski's theorem, its points are either horizontally divisorial or correspond to trivial valuations.
Explicitly, the point~$x_\eta$ is given by the valuation
\[
 \lvert \cdot (x_\eta)\rvert \colon k(x) \overset{\lvert \cdot (x) \rvert}{\longrightarrow } \lvert k(x)^\times \rvert \cup \{0\} \longrightarrow \left( \lvert k(x)^\times \rvert/\Delta_k \right) \cup \{0\},
\]
where~$\Delta_k$ is the convex subgroup of~$\lvert k(x)^\times \rvert$ generated by~$\lvert k^\times \rvert$.

\begin{lemma} \label{divisorial_an}
 Let~$C$ be a smooth curve over $(k,k^+)^{\disc}$.
 Then every point $x \in C$ is either trivial, contained in $C^{\an}$, or $x_\eta$ is horizontally divisorial.
\end{lemma}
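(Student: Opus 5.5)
The plan is a case analysis on the point $x_\eta$, the minimal generalization of $x$ in the generic fiber $C_\eta$ over the trivially valued field $(k,k)$. By Ostrowski's theorem, as recalled above, $x_\eta$ is either horizontally divisorial or trivial on $k(\supp x)$. In the first case there is nothing to prove.

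So assume $x_\eta$ is a trivial valuation. Since $\lvert\cdot(x_\eta)\rvert$ is the composite of $\lvert\cdot(x)\rvert$ with the quotient by $\Delta_k$, triviality of $x_\eta$ means $\lvert k(x)^\times\rvert = \Delta_k$. In words, the value group of $x$ lies in the convex hull generated by $\lvert k^\times\rvert$. I would now split according to whether the valuation of $x$ restricted to $k$ is trivial, i.e.\ whether $x$ lies over the trivial point of $\Spa(k,k^+)^{\disc}$.

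If $x$ lies over the trivial valuation of $k$, then $\Delta_k$ is the trivial group. Hence $\lvert k(x)^\times\rvert$ is trivial and $x$ is a trivial point.

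If $x$ lies over a nontrivial point $s$, I want to show that $\lvert\varpi(x)\rvert$ is cofinal in $\lvert k(x)^\times\rvert$; then \cref{image_anlytification} gives $x \in C^{\an}$. The key observation is that $\Delta_k$, the convex subgroup generated by $\lvert k^\times\rvert$, is controlled by $\varpi$. Every $a \in k^\times$ satisfies $\lvert\varpi\rvert^{n} \le \lvert a\rvert \le \lvert\varpi\rvert^{-n}$ in $\lvert k^\times\rvert$ for some $n$, because $k^\circ$ is the rank-one localization of $k^+$ at $\varpi$. This inequality persists after restricting to the point $s$, which is a specialization of the analytic topology on $k$; this needs a short check. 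Hence every element of $\Delta_k = \lvert k(x)^\times\rvert$ lies between $\lvert\varpi(x)\rvert^{n}$ and $\lvert\varpi(x)\rvert^{-n}$ for some $n$, which is precisely the required cofinality. Here one has to handle the possibility that $s$ is a nontrivial point of $\Spa(k,k^+)^{\disc}$ whose valuation is coarser than that of $k^\circ$, so that $\lvert\varpi(s)\rvert = 1$. In that case $\Delta_k$ (generated by $\lvert k^\times\rvert$ at $s$) is still generated by values bounded by powers of $\lvert\varpi\rvert$, but these bounds may degenerate.

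I expect this degenerate case to be the main obstacle: the points $s$ with $\lvert\varpi(s)\rvert = 1$, which are generalizations of the analytic locus of $\Spa(k,k^+)$. I would argue there that $\lvert k(s)^\times\rvert$ is a quotient of $\lvert k^\times\rvert$ by a convex subgroup containing $\lvert\varpi\rvert$, so the bounds no longer give cofinality. In that case the statement must be read with ``trivial'' or ``horizontally divisorial'' relative to the corresponding coarsening. Concretely, I would treat these points by replacing $(k,k^+)$ by the affinoid field $(k,k^+_s)$ for the valuation ring of $s$. If that field is analytic, I rerun the cofinality argument with its pseudouniformizer; if it is trivially valued, I fall back to the trivial-point case. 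The remaining steps are routine bookkeeping with convex subgroups.
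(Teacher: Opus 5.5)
Your main line is correct and is the paper's argument run in reverse. The paper starts from the cofinality criterion of \cref{image_anlytification}: if $\lvert\varpi(x)\rvert$ is not cofinal, then either $\lvert k(x)^\times\rvert$ is trivial or $\lvert k(x)^\times\rvert/\Delta_k$ is nontrivial, and Ostrowski then makes $x_\eta$ horizontally divisorial. You start from Ostrowski on $x_\eta$; when $x_\eta$ is trivial you get $\lvert k(x)^\times\rvert=\Delta_k$ and then prove cofinality. Both routes rest on the same fact, namely that the convex hull of $\lvert\varpi(x)\rvert$ is $\Delta_k$.

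Your final paragraph, however, is mistaken, and as written it makes your proof depend on a reinterpretation of the statement that is neither needed nor legitimate. The ``degenerate case'' of a nontrivial point $s\in\Spa(k,k^+)^{\disc}$ with $\lvert\varpi(s)\rvert=1$ does not exist, and your own sandwich already shows this. The inequality $\lvert\varpi\rvert^n\le\lvert a\rvert\le\lvert\varpi\rvert^{-n}$ is just topological nilpotence of $\varpi$. It survives passage to any point $s$, because $\lvert\cdot(s)\rvert$ is $\lvert\cdot\rvert$ followed by an order-preserving quotient $\lvert k^\times\rvert\to\lvert k^\times\rvert/H$. So if $\lvert\varpi(s)\rvert=1$, then $\lvert a(s)\rvert=1$ for all $a\in k^\times$, i.e.\ $s$ is the trivial point. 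Two equivalent ways to see this:
\begin{itemize}
\item a convex subgroup $H$ of $\lvert k^\times\rvert$ containing $\lvert\varpi\rvert$ must be all of $\lvert k^\times\rvert$;
\item ring-theoretically, $\varpi$ lies in every nonzero prime of $k^+$, so every overring $V\neq k$ of $k^+$ in $k$ has $\varpi\in\m_V$.
\end{itemize}

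Hence the nontrivial points of $\Spa(k,k^+)^{\disc}$ are exactly the points of $\Spa(k,k^+)$, and at each of them $\lvert\varpi(s)\rvert<1$ is cofinal in $\lvert k(s)^\times\rvert$. Your case of a nontrivial $s$ then goes through verbatim. The one thing you must add is that $\lvert\varpi(x)\rvert<1$, which cofinality needs and which follows from the above. Drop the last paragraph entirely, including the base change to $(k,k^+_s)$ and the ``relative'' reading of trivial/horizontally divisorial; the lemma holds exactly as stated.
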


\begin{proof}
 If $|\varpi(x)|$ is cofinal in~$\lvert k(x)^\times \rvert$, $x$ lies in the image of $C^{\an} \to C$ (see \cref{image_anlytification}).
 Now suppose that $|\varpi(x)|$ is not cofinal in~$\lvert k(x)^\times \rvert$.
 This happens precisely when either~$\lvert k(x)^\times \rvert$ is trivial or the convex subgroup of~$\lvert k(x)^\times \rvert$ generated by $\lvert \varpi(x) \rvert$ is non-trivial.
 Since~$\varpi$ is a pseudouniformizer of $(k,k^+)$, this convex subgroup equals the convex subgroup~$\Delta_k$ of~$\lvert k(x)^\times \rvert$ generated by~$\lvert k^\times \rvert$.
 In the first case~$x$ corresponds to a trivial valuation and in the second case the value group $\lvert k(x)^\times \rvert/\Delta_k$ of~$x_\eta$ is non-trivial.
 By Ostrowski's theorem,~$x_\eta$ has to be horizontally divisorial in the second case.
\end{proof}

\begin{proposition} \label{wild_locus_curves}
 Let $f\colon  Y \to X$ be an étale morphism of smooth curves over an affinoid field $(k,k^+)$ of finite rank.
 Then the wild locus $R_f$ is either empty or contains a divisorial point.
\end{proposition}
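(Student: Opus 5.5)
The plan is to split according to whether $(k,k^+)$ is analytic or discrete. In the analytic case the statement is exactly \cref{wild_locus_analytic_curve}, so there is nothing to do. Suppose therefore that $(k,k^+)$ carries the discrete topology. Pick a point $y \in R_f$ with image $x = f(y)$ over $s \in S$. I would apply \cref{divisorial_an}, or rather its obvious analogue over the base field in question, to $y$. This splits the argument into three cases: $y$ is a trivial valuation, $y$ is bounded (lies in the analytic part), or $y_\eta$ is horizontally divisorial.

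\textbf{Trivial case.} If $y$ is a trivial valuation, then the completed residue field extension $\widehat{k(y)}/\widehat{k(x)}$ is an extension of trivially valued fields. Such an extension is always tame, so $y \notin R_f$. This case cannot occur.

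\textbf{Horizontally divisorial case.} Suppose $y_\eta$ is horizontally divisorial. The wild locus is closed by \cite[Corollary~4.4]{HueAd}, and I would like to pass to $y_\eta$. The difficulty is that closedness propagates wildness to specializations, not generizations. So instead I would argue with the composite valuation. The point $y$ is the composite of $y_\eta$ (a discrete rank one valuation of $k(Y)$ trivial on $k$) with a valuation on its residue field $k(y_\eta)$, which is a finite extension of $k$, i.e.\ a point over the point $s$. Since $k(y_\eta)$ is finite over $k$, the residue field extension at $y_\eta$ is a finite extension $\kappa(y_\eta)/\kappa(x_\eta)$ of fields finite over $k$. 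Wildness of $y$ then means one of two things: either $y_\eta$ is wild (its ramification index is divisible by $p$, which needs $\ch k = p$, or its residue extension is inseparable), or the extension $\kappa(y_\eta)/\kappa(x_\eta)$ is wild for the valuation induced by $s$. In the first subcase $y_\eta$ itself is a divisorial point in $R_f$. In the second subcase I would take the point $y'$ of $Y$ given by composing $y_\eta$ with the discrete rank one part of the valuation. More precisely, I would consider the fibre over the closed point of $S$, or over a suitable $s'$, and locate a point whose residue field is divisorial over $k(s')$, using that the specialization field $k(y_\eta)$ has transcendence degree zero. This subcase is the fiddly part. A cleaner route would be to base change to the henselization along $y_\eta$ and reduce to a zero-dimensional statement about the field extension of $k$.

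\textbf{Bounded case.} If $y$ is bounded, then by \cref{image_anlytification} it lies in $Y^{\an}$, and $f^{\an}\colon Y^{\an}\to X^{\an}$ is étale with the same completed residue fields. Here I would need $k$ to admit a nontrivial analytic topology, i.e.\ a pseudouniformizer $\varpi$. When $s$ is nontrivial, I would work over the affinoid field $(k(s),k(s)^+)$ with its valuation topology: base change $Y \to X$ to $s$ and analytify there. Then \cref{wild_locus_analytic_curve} yields a divisorial point in $R_{f^{\an}}$. By \cref{divisorial_etale} and the identification of residue fields, this point maps to a vertically divisorial point of $Y$ lying in $R_f$. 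One has to check that the dimension condition $tr.c_k k(x)=\dim_x X = 1$ is preserved under analytification; this holds since both sides have topological transcendence degree one. When $s$ is trivial we are in the trivial or horizontal case.

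\textbf{Main obstacle.} I expect the main obstacle to be the horizontally divisorial case when $y_\eta$ is tame but $y$ is wild. The plan there is to reduce, via the residue-field interpretation, to the wildness of the finite extension $\kappa(y_\eta)/\kappa(x_\eta)$ of $k$. I would then realize the same wildness at a vertically divisorial point, obtained by choosing a point of $Y^{\an}$ near the classical point corresponding to $y_\eta$. Wildness there follows from \cref{wild_monomial_valuation}, applied after étale-locally identifying $Y$ near that classical point with a disc over $\kappa(y_\eta)$.
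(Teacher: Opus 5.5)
\textbf{There is a genuine gap in the horizontally divisorial case.}

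Your trivial and bounded cases agree with the paper. In the bounded case no base change to $s$ is needed: the paper simply applies \cref{wild_locus_analytic_curve} to $f^{\an}\colon Y^{\an}\to X^{\an}$.

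The gap is the horizontally divisorial case with $y_\eta\notin R_f$. You correctly flag it as the main obstacle, but you only list possible routes and do not carry any of them out.

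The missing idea is to move \emph{horizontally}. Write $y=y_\eta\circ w$ and take the point with support the center $c_y$ of $y_\eta$ and valuation $w$ on $k(c_y)=k(y_\eta)^\succ$. This point is bounded. Its residue field extension is $(\kappa(y_\eta),w)/(\kappa(x_\eta),w)$, so it is wild, and the analytic case then produces a vertically divisorial wild point.

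Two things must be arranged first, and your sketch presupposes both without justification.
\begin{enumerate}
\item[(a)] $f$ must be étale at $c_y$. Knowing that $y_\eta$ is not wild does not give this, since $y_\eta$ may be tamely ramified. The paper therefore first uses \cref{red_fet} to reduce to a composite of degree-$p$ finite étale maps, for which non-wild points are unramified.
\item[(b)] $c_y$ need not lie on $C_Y$. The paper temporarily adds it, forming $C_Y'=C_Y\cup\{c_y\}\to C_X'=C_X\cup\{c_x\}$, and applies the analytic case to the enlarged $f'\colon Y'\to X'$. A divisorial point has generic support, so the wild divisorial point found already lies in $Y$.
\end{enumerate}
Once (a) and (b) are in place, your idea of applying \cref{wild_monomial_valuation} to Gauss points of small discs around $c_y$ is a legitimate and more elementary substitute for invoking the analytic case. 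Those Gauss points lie in $Y^{\an}$ even if $c_y\notin Y$. However, the étale-local identification of $f$ near $c_y$ with the base-change map of discs over $\kappa(y_\eta)$ and $\kappa(x_\eta)$ is exactly what requires $f$ to extend étale across $c_y$.

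\textbf{Your dichotomy also contains a false step.} The parenthetical ``which needs $\ch k=p$'' is wrong. If $\ch k=0$, then $y_\eta$ has residue characteristic $0$ and is never in $R_f$. Yet $p\mid e(y_\eta)$ can occur, and this makes $y$ wild even when $\kappa(y_\eta)=\kappa(x_\eta)$. For example, take $z\mapsto z^p$ on $\mathbb{G}_m$ over $(\Q_p,\Z_p)^{\disc}$ with $y=\mathrm{ord}_z\circ v_p$. Neither of your two subcases covers this situation. Moreover, $f$ does not extend étale across $c_y$ there, so the horizontal-specialization argument does not apply directly either. The paper's assertion that non-wild points become unramified after \cref{red_fet} likewise only holds at points of residue characteristic $p$. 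This mixed-characteristic case therefore needs a separate argument, for instance an analysis of the Kummer-type local structure of $f$ near $c_y$.
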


\begin{proof}
 If $(k,k^+)$ is analytic, the claim follows from \cref{wild_locus_analytic_curve}.
 Assume now that $(k,k^+)$ carries the discrete topology.
 Let $y \in Y$ be a point contained in~$R_f$.
 By \cref{divisorial_an} the point~$y$ is either trivial, bounded or $x_\eta$ is divisorial.
 Trivial valuations cannot be contained in~$R_f$, so this case is excluded.
 If~$y$ is bounded, it is contained in the wild locus of
 \[
  f^\an \colon  Y^\an \longrightarrow X^\an.
 \]
 Hence the result follows from the analytic case (\cref{wild_locus_analytic_curve}).

 Suppose in the sequel that $y_\eta$ is horizontally divisorial.
 We will reduce to the analytic case but will have to carry out a trick that involves adding additional points to our curves.
 Using \cref{red_fet} we may assume that $f$ is finite of $p$-power degree.
 For every point $y' \in Y \setminus R_f$ with image $x' \in X$, the residue field extension $k(y')/k(x')$ is unramified.
 We can replace~$Y$ and~$X$ by open neighborhoods of~$y$ and its image~$x$ in~$X$.
 In this way we achieve that the morphism~$f$ comes from a diagram of schemes
 \[
  \begin{tikzcd}
   C_Y		\ar[r,open] \ar[d,"\text{étale}"'] & C_Y^+		\ar[d]  \\
   C_X		\ar[r,open] \ar[d]					& C_X^+		\ar[d]  \\
   \Spec k	\ar[r,open]							& \Spec k^+.
  \end{tikzcd}
 \]
 where~$C_X^+$ and~$C_Y^+$ are normal relative curves over~$k^+$ with dense open subschemes~$C_Y$ and~$C_X$ contained in the generic fibre that are smooth curves over~$k$.
 Then~$f$ is the induced morphism
 \[
  f \colon  Y = \Spa(C_Y,C_Y^+) \longrightarrow X = \Spa(C_X,C_X^+).
 \]
 For instance,~$f$ takes this form if~$Y$ and~$X$ are affinoid.

 If $y_\eta \in R_f$, we are done.
 Otherwise we write~$y$ as a composition $y = y_\eta \circ w$, where~$w$ is a valuation on the specialization field $k(y_\eta)^\succ$.
 If~$y_\eta$ has a center~$c_y$ on~$C_Y$, the valuation~$w$ on $k(y_\eta)^\succ = k(c_y)$ defines a horizontal specialization of~$y$.
 It is bounded and contained in the wild locus, so we are done by the analytic case (\cref{wild_locus_analytic_curve}).

 In case $y_\eta$ does not have a center on~$C_Y$, the idea is to temporarily add the center to~$C_Y$.

 Being an element of~$Y$, the valuation~$y_\eta$ has a center $c_y \in C_Y^+$.
 Since~$y_\eta$ lies over~$\eta$, the center~$c_y$ is contained in the generic fiber.
 We denote by~$c_x$ the image of~$c_y$ in~$C_X^+$.
 It is the center of the image $x_\eta$ of $y_\eta$.

 As we are dealing with curves, the union $C_Y':=C_Y \cup \{c_y\}$ is again a smooth curve over~$k$ and an open subscheme of~$C_Y^+$ and similarly for $C_X':=C_X \cup \{c_x\}$.
 The induced morphism
 \[
  C_Y' \longrightarrow C_X'
 \]
 is étale when restricted to~$C_Y$ (obviously) but it is also étale at~$c_y$ because we assumed that $y_\eta$ is not contained in~$R_f$.
 We consider the morphism of adic spaces associated with the diagram of schemes
 \[
  \begin{tikzcd}
   C_Y'		\ar[r,open] \ar[d,"\text{étale}"'] & C_Y^+		\ar[d]  \\
   C_X'		\ar[r,open] \ar[d]					& C_X^+		\ar[d]  \\
   \Spec k	\ar[r,open]							& \Spec k^+.
  \end{tikzcd}
 \]
 We use the following notation for the corresponding morphism of adic spaces:
 \[
  f' \colon  Y':=\Spa(C_Y',C_Y^+) \longrightarrow X' := \Spa(C_X',C_X^+).
 \]
 Since $C_Y' \to C_X'$ is étale, also $f'$ is étale and it fits into a diagram
 \[
  \begin{tikzcd}
   Y	\ar[d,"f"']	\ar[r,open]	& Y'	\ar[d,"f'"]	\\
   X	\ar[r,open]				& X'.
  \end{tikzcd}
 \]
 The horizontal maps are open immersions.
 The complement of the image of~$Y$ in~$Y'$ consists of the points with support~$c_y$.
 Similarly, the complement of~$X$ in~$X'$ is given by the points with support~$c_x$.
 Note that the points with support~$c_y$ are either trivial or bounded.

 On~$Y'$ the horizontal specialization of~$y$ defined by~$y_\eta$ does exist.
 It is a point whose support is the closed point~$c_y$ of~$C_Y'$.
 This point has to be bounded (i.e. not trivial) as otherwise $x = x_\eta$, which is impossible in combination with our assumption that~$f$ is wild at~$x$ but strongly étale at~$x_\eta$.
 By the analytic case (\cref{wild_locus_analytic_curve}), there is a divisorial point $y' \in Y'$ where~$f'$ is wild.
 The support of any divisorial point of~$Y'$ is the generic point of~$C_Y'$.
 Therefore,~$y'$ is already contained in~$Y$ and we have found the point we were looking for.
\end{proof}

\section{The wild locus in higher dimensions} \label{sect:wild_higher_dim}

Let $(k,k^+)$ be an affinoid field of finite rank.
In this section we study étale morphisms of smooth adic spaces over~$(k,k^+)$.
If the dimension of the spaces is greater than one, we do not have as much insight into their geometric structure as in the curve case.
We especially lack a higher dimensional analogue of intervals or segments.
Accordingly, we do not aim for a detailed study of continuity properties of the wild degree.
It will be sufficient for our puposes to know that the wild locus contains a divisorial point.

Our approach to tackle the higher dimensional setting consists in reducing to the curve case by constructing suitable fibrations into curves.
Accordingly, the construction of these fibrations will be the first step in this section.
We start with a preparatory lemma.

\begin{lemma} \label{transcendence_basis}
 Let $(K,K^+)/(k,k^+)$ be a separable extension of valued fields of transcendence degree~$n$ such that the specialization field extension has transcendence degree~$F$.
 Then there exist algebraically independent elements $t_1,\ldots,t_F\in K^+$ such that $K/k(t_1,\ldots,t_F)$ is separable and $K^\succ/k^\succ(t^\succ_1,\ldots,t^\succ_F)$ is algebraic (where~$t^\succ_i$ denotes the image of~$t_i$ in the specialization field~$K^\succ$).
\end{lemma}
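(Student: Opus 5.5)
Let $x_1,\ldots,x_F\in K^+$ be lifts of a transcendence basis of $K^\succ/k^\succ$. My plan is to adjust these lifts so that $K$ becomes separable over the subfield they generate. First I choose elements $\bar u_1,\ldots,\bar u_F\in K^\succ$ forming a transcendence basis of $K^\succ/k^\succ$, and I pick arbitrary lifts $u_i\in K^+$. Any such lifts are automatically algebraically independent over $k$. Indeed, suppose $P(u_1,\ldots,u_F)=0$ for a nonzero $P\in k[X_1,\ldots,X_F]$. Scaling by an element of $k$, we may take the coefficient of maximal absolute value to be $1$. Reducing then gives a nonzero relation $\bar P(\bar u_1,\ldots,\bar u_F)=0$ over $k^\succ$, which contradicts independence. (This is the standard argument behind Abhyankar's inequality.) The algebraicity of $K^\succ/k^\succ(u_1^\succ,\ldots,u_F^\succ)$ holds by construction. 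So the only thing left to arrange is separability of $K/k(t_1,\ldots,t_F)$.

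For separability I would use the criterion that a finitely generated extension $K/k(t)$, with $t=(t_1,\ldots,t_F)$ independent, is separable if and only if $dt_1,\ldots,dt_F$ are linearly independent in $\Omega_{K/k}$. Since $K/k$ is separable, $\Omega_{K/k}$ has $K$-dimension $n$, and elements $s_1,\ldots,s_m$ are part of a separating transcendence basis exactly when their differentials are linearly independent. The job is therefore to modify the lifts $u_i$, without changing their residues, so that $du_1,\ldots,du_F$ become $K$-linearly independent. The natural freedom is $t_i=u_i+\varpi_i w_i$, where $w_i\in K$ and $\varpi_i$ is chosen in $k$ or $K$ small enough that $\varpi_i w_i\in\m_{K^+}$. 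This leaves the residues unchanged. I would proceed inductively. Suppose $dt_1,\ldots,dt_{i-1}$ are already independent. Choose $s\in K$ with $ds$ outside their span; such an $s$ exists because the $ds$ span $\Omega_{K/k}$ and $i-1<F\le n$. Then consider $t_i=u_i+c\,s$ with $c\in k$ (or $c=\pi^N$ with $\pi\in\m_{K^+}$ chosen in $k$ where possible) small enough that $c s\in\m_{K^+}$. We have $dt_i=du_i+c\,ds$ when $c\in k$, so at most one value of the scalar $c$ in each line through the origin can make $dt_i$ lie in the span. If $k$ is infinite there are infinitely many admissible $c$ of small absolute value, so some choice works.

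The main obstacle is the case where $k$ is finite or the valuation on $k$ is trivial, so that there may be no small nonzero scalars in $k$. If $k^+=k$ (trivial valuation on $k$), then $k^\succ=k$ and the problem is more algebraic. There I would take $c=\pi^{N}$ with $\pi\in\m_{K^+}$ nonzero (if $\m_{K^+}=0$ the extension is residually trivial, $F=n$, and we take a separating transcendence basis of $K=K^\succ$ directly). I would also allow $c$ to vary in $\{\pi^N,\pi^{N+1},\ldots\}$ rather than in $k$. Then $dt_i=du_i+\pi^N ds+s\,d(\pi^N)$ is no longer linear in a scalar. In that case I would instead perturb by $\pi^N s$ and $\pi^{N}s^2$ and so on, and use that $d$ is linear over $K^p k$: choosing $c\in K^p\cap\m_{K^+}$ (for example $c=\pi^{pN}$) gives $d(c s)=c\,ds$. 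The infinitely many distinct values $\pi^{pN}$ then give the needed genericity, since two of them, $c\ne c'$, cannot both place $dt_i$ in the span unless $ds$ is in the span. This $p$-th power trick handles all cases uniformly. In characteristic $0$ separability is automatic, and the lemma reduces to the first paragraph.
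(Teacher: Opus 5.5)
Your proposal is correct and follows essentially the paper's argument: lift a transcendence basis of $K^\succ/k^\succ$ to $K^+$, then inductively perturb each lift by an element of $\m_{K^+}$ whose differential is nonzero modulo $dt_1,\ldots,dt_{i-1}$. The paper phrases this step as: if $dt_i=0$ in $\Omega^1_{K/L}$ with $L=k(t_1,\ldots,t_{i-1})$, replace $t_i$ by $t_i+x$ with $dx\neq 0$ and $|x|<1$; this is equivalent to your ``at most one bad $c$'' argument.

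There are two small points. First, you should replace $s$ by $s^{-1}$ when $|s|>1$, as the paper does; otherwise a small multiplier $c$ with $cs\in\m_{K^+}$ need not exist. Second, your choice $c=\pi^{p}$ with $\pi\in\m_{K^+}$, which uses $d(\pi^p s)=\pi^p\,ds$, also covers trivially valued $k$. In that case the paper's multiplier $\pi\in k$ with $|\pi|<1$ is not available.
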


\begin{proof}
 Choose algebraically independent elements
 \[
  t_1,\ldots,t_F \in K^+
 \]
 such that the induced extension on specialization fields
 \[
  k^\succ(t^\succ_1,\ldots,t^\succ_F) \hookrightarrow K^\succ
 \]
 is algebraic.
 This is possible precisely by the definition of~$F$.

 We are almost done, but we still have to resolve the obstacle that $K/k(t_1,\ldots,t_F)$ might be inseparable. We will inductively replace the elements $t_i$ by elements $t'_i=t_i+x_i$ such that $K/k(t'_1,\ldots, t'_F)$ is separable and $|x_i|<1$, forcing that $k^\succ(t'^\succ_1,\ldots,t'^\succ_F)=k^\succ(t^\succ_1,\ldots,t^\succ_F)$ stays unchanged. By induction, we can assume that $L=k(t_1,\ldots,t_{i-1})$ is such that $K/L$ is separable and we should only modify $t_i$. Assume that $K/L(t_i)$ is inseparable, as otherwise we just keep $t_i$ unchanged. Choose any $x\in K$ which is transcendental over $L$ and such that $K/L(x)$ is separable. Replacing $x$ by $x^{-1}$ if needed we can assume that $|x|\le 1$ and then multiplying it by a non-zero $\pi\in k$ we can even assume that $|x|<1$. By our choice $dx\neq 0$ and $dt_i=0$ in $\Omega^1_{K/L}$. Therefore $t'_i=t_i+x$ satisfies $dt'_i\neq 0$, and this means that $K/L(t'_i)$ is separable.
\end{proof}

\begin{proposition} \label{factorization_curves}
 Let $(k,k^+)$ be an affinoid field of finite rank and $X \to \Spa(k,k^+)$ smooth of dimension~$n>0$.
 Let~$x$ be a point of $X$ with $F_x < n$.
 Then there is a strongly étale neighborhood~$U$ of~$x$ in~$X$ with trivial completed residue field extension over~$\widehat{k(x)}$, a smooth adic space $Y/(k,k^+)$, and a smooth morphism $g\colon U \to Y$ of relative dimension~$1$ over $(k,k^+)$ such that $F_{g(x)} = F_x$.
 If $(k,k^+)$ is analytic, we can even achieve that~$U$ is an \emph{open} neighborhood.
\end{proposition}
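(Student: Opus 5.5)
We plan to build $g$ from $n-1$ suitable functions near $x$, using \cref{transcendence_basis} to choose them so that the specialization transcendence degree survives the projection.

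\emph{Choice of functions.} Since $X$ is smooth of dimension $n$ at $x$, after shrinking we may take $X=\Spa(A,A^+)$ affinoid and $\widehat{k(x)}$ of topological transcendence degree at most $n$. Pick a dense subfield $K\subseteq k(x)$, finitely generated over the image of $k$, that contains the images of $A$. Apply \cref{transcendence_basis} to $(K,K^+)/(k(s),k(s)^+)$, where $s$ is the image of $x$. This gives $t_1,\ldots,t_F\in K^+$ with $F=F_x$, separably independent, and with $K^\succ$ algebraic over $k^\succ(t_1^\succ,\ldots,t_F^\succ)$. Since $F<n$ and $K$ is separably generated, we can extend to a separating family $t_1,\ldots,t_{n-1}$ with $K/k(t_1,\ldots,t_{n-1})$ separable of transcendence degree one. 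If $x$ is not of maximal transcendence degree, a transcendence degree count shows the extension still works.

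\emph{Construction of $g$.} Approximate the $t_i$ by elements of $A$, possibly after passing to a strongly étale neighbourhood with trivial residue field extension so that the required germs exist. The approximations should be close enough that the reductions $t_i^\succ$ are unchanged, which uses $|x_i|<1$ as in \cref{transcendence_basis}, and that separability persists. Scaling by powers of $\varpi$ in the analytic case, or using the discrete structure otherwise, we get $g=(t_1,\ldots,t_{n-1})\colon U\to Y$ with $Y$ an open of the closed unit polydisc, or of $\A^{n-1}$ in the discrete case. The differentials $dt_1,\ldots,dt_{n-1}$ are linearly independent in $\Omega^1_{X}\otimes k(x)$ because separability of $K/k(t_1,\ldots,t_{n-1})$ means they extend to a basis. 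Hence, by the Jacobian criterion (Huber, \cite[1.6]{Hu96}), $g$ is smooth of relative dimension $1$ on an open neighbourhood of $x$.

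\emph{Checking $F_{g(x)}=F_x$.} The point $g(x)$ has residue field containing $k(t_1,\ldots,t_{n-1})$, and its specialization field contains $k^\succ(t_1^\succ,\ldots,t_F^\succ)$. Therefore $F_{g(x)}\ge F$. Conversely, $k(g(x))\subseteq k(x)$ gives $F_{g(x)}\le F_x$.

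\emph{Main obstacle.} The hard step is making $t_i$ into actual functions on an open (not merely étale) neighbourhood while preserving both the reductions and separability. This is where $A\to k(x)$ has dense image but $t_i$ need not lie in the image. In the analytic case, density of $A$ in $\widehat{k(x)}$ and openness of the conditions $|t_i-a_i|<1$ and nonvanishing of a Jacobian minor let us stay Zariski-open. In the discrete case we instead pass to an étale neighbourhood realizing the henselian approximation, via the henselization $X_x^h$ whose residue field is unchanged. This explains the weaker "strongly étale" conclusion.
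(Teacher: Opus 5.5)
Your overall plan matches the paper's: coordinates $t_1,\dots,t_F$ from \cref{transcendence_basis}, a relative curve through $x$, and the sandwich $F\le F_{g(x)}\le F_x$. Your check of $F_{g(x)}=F_x$ is fine. The gap is in how you produce the remaining coordinates.

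You take $t_{F+1},\dots,t_{n-1}$ inside $K$ with $K/k(t_1,\dots,t_{n-1})$ separable of transcendence degree one. You then deduce the independence of $dt_1,\dots,dt_{n-1}$ in $\Omega^1_X\otimes k(x)$ from this separability. That requires $K$ to be separable over $k$ and to contain $n-1$ separably independent elements; ``transcendence degree one'' literally requires $\mathrm{tr.c}_k\,k(x)=n$. But the proposition must hold for every $x$ with $F_x<n$. In the proof of \cref{R_f_divisorial_point} it is applied to an arbitrary non-divisorial point of the wild locus. Examples are a classical point when $n\ge 2$ (possibly with residue field inseparable over $k$), or any point with $\mathrm{tr.c}_k\,k(x)\le n-2$. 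For such points $K$ has no $n-1$ algebraically independent elements. So the claim that ``a transcendence degree count shows the extension still works'' is false, and your Jacobian argument has no input.

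Only $t_1,\dots,t_F$ need to come from $k(x)$; the other coordinates should be arbitrary germs of functions at $x$. There are two ways to do this.
\begin{enumerate}[(a)]
 \item Pick $h_{F+1},\dots,h_{n-1}\in\cO_{X,x}$ whose differentials complete $dt_1,\dots,dt_F$ to $n-1$ linearly independent vectors of the $n$-dimensional fibre $\Omega^1_X\otimes k(x)$.
 \item Argue as the paper does. The $t_1,\dots,t_F$ define a smooth morphism $U\to\D_F$ of relative dimension $n-F$ sending $x$ to the Gau\ss{} point $\eta$. By \cite[Corollary~1.6.10]{Hu96} it factors locally as $U\to\D_n\to\D_{n-1}\to\D_F$ with $U\to\D_n$ \'etale. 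Then $g\colon U\to\D_{n-1}$ is a smooth relative curve and $F_\eta\le F_{g(x)}\le F_x$. Here the extra coordinates come from the local structure theorem and impose no condition on $k(x)$.
\end{enumerate}

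Two smaller remarks. First, your ``main obstacle'' is not one. Since $k(x)$ is the residue field of the stalk $\cO_{X,x}$, every $t_i\in k(x)$ is already the value at $x$ of a function on an open neighbourhood, so no approximation or henselization is needed. Also, a finitely generated $K$ cannot in general contain the image of an affinoid algebra. Second, you must not rescale $t_1,\dots,t_F$ by powers of $\varpi$, since this kills $t_i^\succ$; shrink $U$ to $\{|t_i|\le 1\}$ instead.
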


\begin{proof}
 Set $F = F_x$.
 Using \cref{transcendence_basis} we find $t_1,\ldots,t_F$ in $k(x)^+$ such that $\widehat{k(x)}$ is the completion of a separable extension of $k(t_1,\ldots,t_F)$ and $k^\succ(t_1^\succ,\ldots,t_F^\succ)/k^\succ$ has transcendence degree~$F$.
 Since $F < n$, the topological transcendence degree of $k(x)/k(t_1,\ldots,t_F)$ is at least~$1$.
 On a neighborhood $U \subseteq X$ of~$x$, the extension $k(x)/k(t_1,\ldots,t_F)$ defines a smooth morphism
 \[
  f \colon U \longrightarrow \D_F =\Spa(k \langle T_1,\ldots,T_F \rangle,k^+ \langle T_1,\ldots,T_F \rangle),
 \]
 sending~$x$ to the Gauß point~$\eta$ (note that $k \langle T_1,\ldots,T_F \rangle = k[T_1,\ldots,T_F]$ in case~$k$ carries the discrete topology).
 Then $F=F_x=F_\eta$ and~$f$ is smooth of relative dimension $n-F \ge 1$.

 Shrinking~$U$ if necessary, we find a factorization of~$f$ into
 \[
  U \longrightarrow \D_n \longrightarrow \D_{n-1} \longrightarrow \D_F,
 \]
 where the first map $U \to \D_n$ is étale (see \cite[Corollary 1.6.10]{Hu96}).
 The image point $y \in \D_{n-1}$ of~$x$ satisfies $F_\eta \le F_y \le F_x$, whence $F_y = F_x$.
 Moreover, $U \to \D_{n-1}$ is a smooth relative curve and thus satisfies all the required properties.
 \end{proof}

 So far we have mainly discussed analytic adic spaces.
 Via analytification we will be able to treat the discretely ringed case as well.
 However, we need to do some extra work to treat unbounded points.
 The next lemma will be used for this purpose.

\begin{lemma} \label{wild_horizontal_generalization}
 Let $(k,k^+)$ be a discrete affinoid field and $f \colon Y \to X$ an étale morphism of smooth adic spaces over~$(k,k^+)$.
 Suppose that~$y \in Y$ is a point with non-generic support contained in the wild locus~$R_f$.
 Then there is a nontrivial horizontal generalization~$y'$ of~$y$ with the same specialization field contained in~$R_f$.
\end{lemma}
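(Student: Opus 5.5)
We plan to imitate the ``adding the center'' trick from the proof of \cref{wild_locus_curves}, run in the opposite direction. The question is local around~$y$, so we may assume that $f$ comes from an étale morphism of schemes $\cY \to \cX$ of finite type over~$k$, compactified over $k^+$: we have $Y = \Spa(\cY,\cY^+)$ and $X = \Spa(\cX,\cX^+)$. Let $c_y = \supp y \in \cY$ and $c_x = \supp x$. By hypothesis $c_y$ is not the generic point. The point $y$ is a valuation $v$ of the residue field $k(c_y)$ whose center lies in $\cY^+$. A horizontal generalization $y'$ of $y$ with the same specialization field should have the form $y' = w\circ \bar v$, where $w$ is a valuation of $k(\xi)$ for a point $\xi\in\cY$ generizing $c_y$. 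Here $w$ has center $c_y$ and residue field $k(c_y)$, and $\bar v$ stands for $v$ viewed on that residue field. Hence $k(y')^\succ = k(y)^\succ$ automatically.

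The simplest choice is to take $\xi$ to be an immediate generization of $c_y$ along which $\cY$ is regular. Since $Y$ is smooth, $\cY$ is smooth over~$k$, and the local ring $\cO_{\cY,c_y}$ is regular. Pick a regular parameter $t\in\cO_{\cY,c_y}$ that is part of a regular system of parameters, and let $\xi$ be the generic point of the regular subscheme $V(t_2,\dots,t_d)$ through $c_y$ of codimension one less. Then $\cO_{\overline{\{\xi\}},c_y}$ is a DVR with residue field $k(c_y)$. Let $w$ be its discrete valuation, and set $y' := w\circ\bar v$ with value group ordered so that $w$ is ``infinitely small'' relative to $v$'s group. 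This makes $y'$ a horizontal generalization of $y$. We also have to check that $y'\in Y$, meaning its center lies in $\cY^+$; this follows because the center of $y'$ equals the center of $y$. The point $y'$ is nontrivial since $w$ is nontrivial.

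Next, we have to choose $t$ so that $y'\in R_f$. We work in $\cX$ with $x'=f(y')$. Because $\cY\to\cX$ is étale at $c_y$, the parameters can be transported: choose $t_2,\dots,t_d$ as pullbacks of regular parameters at $c_x$. The image of $\xi$ is then the analogous point $\xi_x$, and $\overline{\{\xi\}}\to\overline{\{\xi_x\}}$ is étale at $c_y$. The extension of DVRs $\cO_{\overline{\{\xi\}},c_y}/\cO_{\overline{\{\xi_x\}},c_x}$ is therefore unramified, with residue extension $k(c_y)/k(c_x)$. Consequently $k(y')/k(x')$ is a composite extension whose $w$-part is unramified and whose residual part is exactly $k(y)/k(x)$ restricted to supports. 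By the compositional behavior of tameness for composite valuations (the maximal tame extension of a composite valuation is built from the unramified extension for $w$ and the tame extension for $v$ on the residue field), the wild degree satisfies $\wild_f(y') = \wild_f(y) > 1$. Thus $y'\in R_f$.

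I expect the main obstacle to be making this last computation rigorous. We have to justify that the completed residue field of the composite valuation $w\circ\bar v$ relates to $\widehat{k(y)}$ in the needed way, and that an extension which is unramified for $w$ has the same wild degree as its residue extension. I would first try to use henselizations: replace $\cO_{\overline{\{\xi_x\}},c_x}$ by its henselization. The étale extension then becomes $K^h\otimes$ of a finite étale algebra, lifted uniquely from $k(c_y)/k(c_x)$, and wildness for the composite valuation reduces to wildness of the residual extension, which is $\widehat{k(y)}/\widehat{k(x)}$ since $k(c_y)$ is dense in $k(y)$.
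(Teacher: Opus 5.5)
Your proposal is correct and follows essentially the paper's argument. The paper likewise transports a regular system of parameters $a_1,\ldots,a_n$ from $\cO_{X,x}$ to $\cO_{Y,y}$ via étaleness and composes $y$ with the discrete valuations they define; the only difference is that it composes the whole chain of $n$ discrete valuations (so $y'$ ends up with generic support), whereas you stop after one step, which already suffices, and your henselization sketch for $\wild_f(y')=\wild_f(y)$ supplies detail the paper merely asserts.
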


\begin{proof}
 We consider the local ring homomorphism $\cO_{X,x} \to \cO_{Y,y}$ of regular local rings.
 Let $a_1,\ldots,a_n$ be a regular system of parameters in the maximal ideal of~$\cO_{X,x}$.
 By our assumption that the support of~$y$ is non-generic, we obtain that $n \ne 0$.
 Then~$a_1$ defines a discrete rank~$1$ valuation~$v_1$ on the fraction field of~$\cO_{X,x}$ such that its residue field equals the fraction field of $\cO_{X,x}/(a_1)$.
 More generally, $a_i$ defines such a valuation~$v_i$ on the fraction field of $\cO_{X,x}/(a_1,\ldots,a_{i-1})$ whose residue field equals the fraction field of $\cO_{X,x}/(a_1,\ldots,a_i)$.
 The valuations~$v_i$ can be composed to a valuation~$v$ of the fraction field of~$\cO_{X,x}$ with residue field equal to $k(x)$.
 Now in turn we can compose~$v$ with~$x$ to obtain a point~$x'$ of~$X$, which is a horizontal generalization of~$x$.
 Note that we do not have any continuity issues as we are dealing with discretely ringed adic spaces.

 Now we use that~$f$ is étale to conclude that $a_1,\ldots,a_n$ is also a regular system of parameters in~$\cO_{Y,y}$.
 The same construction as in~$\cO_{X,x}$ gives a valuation~$w$ of the fraction field of~$\cO_{Y,y}$ with residue field~$k(y)$.
 Composing it with~$y$, we obtain a point~$y'$ in~$Y'$, which is in fact the image of~$x'$.
 The extension $k(y')/k(x')$ is wild with respect to $y' = w \circ y$ because $k(y)/k(x)$ is wild with respect to~$y$.
\end{proof}

\begin{theorem} \label{R_f_divisorial_point}
 Let $f\colon Y \to X$ be an étale morphism of smooth adic spaces over an affinoid field $(k,k^+)$ of finite rank.
 Then the residually wild locus $R_f \subseteq Y$ is either empty or contains a divisorial point.
\end{theorem}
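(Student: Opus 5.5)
We argue by induction on $n=\dim_y Y$ at a point $y\in R_f$. By \cref{dim_etale} this equals $\dim_{f(y)} X$, and by \cref{divisorial_etale} it suffices to find any divisorial point of $Y$ lying in $R_f$. The case $n=1$ is \cref{wild_locus_curves}. For $n=0$ the spaces are finite over $(k,k^+)$, every point has $tr.c_k k(y)=0=F_y$, and so every point is divisorial. For the inductive step, fix $y\in R_f$ with image $x$. We look at $F_y$: if $tr.c_k k(y)=n$ and $F_y=n$, then $y$ is already divisorial. The aim is therefore to replace $y$ by a wild point with larger $F$, or with full transcendence degree, until divisoriality is reached.

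The first reduction is to points of full transcendence degree. If $tr.c_k k(y)<n$, we want a generalization of $y$ that remains in $R_f$. In the discrete case with non-generic support, \cref{wild_horizontal_generalization} supplies a horizontal generalization $y'\in R_f$ with the same specialization field. For a horizontally divisorial target we can then argue as in \cref{wild_locus_curves}, since $tr.c$ increases while $F$ is unchanged. In the analytic case (and for bounded points, via the analytification of \cref{sect:analytification} together with \cref{image_anlytification}) I would not generalize directly. Instead I would feed $y$ straight into the fibration step below, which does not require full transcendence degree.

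Fibration step. Suppose $F_y<n$. By \cref{red_fet} we may assume, after tame base change (which preserves membership in $R_f$ and, by \cref{divisorial_etale}, divisoriality), that $f$ is finite étale. Apply \cref{factorization_curves} to $x\in X$. This yields an open (or strongly étale, with trivial residue extension) neighborhood $U$ of $x$ and a smooth relative curve $g\colon U\to Z$ with $F_{g(x)}=F_x$. Pulling back gives $V=f^{-1}(U)\to U\to Z$. Over the point $z=g(x)$ we pass to the fiber $U_z$, a smooth curve over the affinoid field $(k(z),k(z)^+)$; if necessary we complete it and replace $k(z)^+$ by the valuation ring induced by $x$, which still has finite rank by Abhyankar. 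The restriction $f_z\colon V_z\to U_z$ is étale, and $y$ lies in its wild locus, since the residue fields are the same. \Cref{wild_locus_curves} then provides a divisorial point $y_1\in V_z$ of the fiber with $f$ wild at $y_1$. Because $y_1$ is vertically divisorial over $k(z)$, we have $F_{y_1}=F_z+1=F_y+1$, which follows from the transitivity of $F$ and $E$ along $k\subset k(z)\subset k(y_1)$. Iterating at most $n$ times reaches a wild point with $F=n$, and then $tr.c=n$ forces divisoriality, using Abhyankar's inequality $tr.c\ge F$ and the bound $tr.c\le n$.

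The main obstacle I expect is this last transfer step. We must ensure that the fiberwise divisorial point over $z$ is a genuine point of $Y$ at which $F$ has increased. This requires the fiber over a possibly higher-rank, non-closed point $z$ to be an honest smooth curve over an affinoid field of finite rank, and the composed valuation to satisfy $F_{y_1}=F_z+F_{k(y_1)/k(z)}$. I would first check this via the composition-of-valuations description of the fiber. The horizontal case (discrete $k$, trivial $s$) must be handled separately: there the curve result may return a horizontally divisorial point, which increases $tr.c-F$ bookkeeping differently, and I would absorb this using the definition (ii) of divisoriality.
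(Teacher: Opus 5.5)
Your architecture is the paper's. You fibre a neighbourhood of $x$ into curves via \cref{factorization_curves} with $F_{g(x)}=F_x$, apply \cref{wild_locus_curves} on the fibre $C$ over $z=g(x)$, and iterate on $F$. (The induction on $n$ is never actually used, and the reduction to finite étale maps via \cref{red_fet} is unnecessary.)

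The gap is in the step that is supposed to make the iteration progress. You assert that the divisorial point $y_1$ returned by \cref{wild_locus_curves} is vertically divisorial over $k(z)$, hence $F_{y_1}=F_y+1$, but nothing guarantees this. In the discretely ringed case $k(z)$ may be trivially valued. Then every divisorial point of $C$ is a discrete rank one valuation trivial on $k(z)$ whose specialization field is finite over $k(z)$, so $F$ does not increase at all. Worse, $y$ itself can be such a point; then \cref{wild_locus_curves} is satisfied by $y$ and gives you nothing new. Your last paragraph concedes this case and proposes to absorb it via definition~(ii), but that does not work. Such a point has $tr.c_k\,k(y)=\trdeg_k k(z)+1$, which is $<n$ whenever $g(x)$ has non-generic support, so it is not divisorial in $Y$.

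The missing idea, which the paper's proof supplies, is to rule out that $x$ is already divisorial in its fibre before invoking the curve case.
\begin{itemize}
\item Since $F_{g(x)}=F_x$, the point $x$ cannot be vertically divisorial in $C$. So it is divisorial in $C$ only if $k(g(x))$ is trivially valued and $x$ is discrete of rank one.
\item As $x$ is not divisorial in $X$, $g(x)$ (hence $x$) has non-generic support. Then \cref{wild_horizontal_generalization} yields a wild horizontal generalization $y''$ with $F_{y''}=F_y$.
\item Its image $x''$ still satisfies $F_{g(x'')}=F_{x''}$, and having rank $\ge 2$ it cannot be divisorial in the fibre over $g(x'')$.
\end{itemize}
Only after this replacement does ``$y'$ divisorial in $C_Y$ while $y$ is not'' force $F_{y'}>F_y$.

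You do invoke \cref{wild_horizontal_generalization}, but with a different aim (reaching full transcendence degree), not in every case, and without linking it to this obstruction. Applied systematically in the discrete case, your generic-support reduction could serve the same purpose. If $x$ has generic support then so does $g(x)$, and horizontal divisoriality of $x$ in $C$ already makes $x$ horizontally divisorial in $X$. But you would have to supply that argument, together with a check that generic support persists along the iteration.
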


\begin{proof}
 Without loss of generality we may assume that~$X$ and~$Y$ are connected.
 In particular,~$X$ and~$Y$ are then pure of some dimension~$n$.
 Choose $y \in R_f$ and denote by~$x$ its image in~$X$.
 Suppose that~$y$, and hence~$x$, is not divisorial.
 So $F_x = F_y < n$.
 In the following we will find a new point $y' \in R_f$ over $x' \in X$ with $F_{x'} > F_x$.
 Repeating this process finitely many times will produce a divisorial point in $R_f$.

 We use \cref{factorization_curves} to find an open neighborhood~$U$ of~$x$, a smooth adic space~$Z$ over $(k,k^+)$, and a smooth morphism $g\colon  U \to Z$ over~$(k,k^+)$ of relative dimension~$1$ such that $F_x = F_{g(x)}$.
 So the specialization field extension $k(x)^\succ/k(g(x))^\succ$ is algebraic.
 Let~$C$ denote the fiber of~$g$ over $g(x)$.
 Then~$C$ is a smooth curve over $(k(g(x)),k(g(x))^+)$.

 Let us first assume that~$x$ is divisorial as a point of~$C$.
 The only way this can happen is when $k(g(x))$ is trivially valued and~$x$ is a discrete rank one valuation.
 Since we assumed that~$x$ is not divisorial as a point of~$X$, we can exclude the case where~$g(x)$ is the generic point of~$Z$.
 We are therefore in the position to apply \cref{wild_horizontal_generalization}.
 In this way we obtain a nontrivial horizontal generalization~$y''$ of~$y$ contained in~$R_f$ with $F_{y''} = F_y$.
 We denote by~$x''$ its image in~$X$.
 It is a nontrivial horizontal generalization of~$x$, hence contained in~$U$.

 Let us check that~$F_{g(x'')} = F_{x''}$.
 On the one hand, $g(x'')$ being a horizontal generalization of~$g(x)$, we know that $F_{g(x'')} \ge F_{g(x)} = F_x = F_{x''}$.
 On the other hand, $F_{g(x'')} \le F_{x''}$ as $x''$ maps to $g(x'')$.
 So in summary we get $F_{g(x'')} = F_{x''}$.

 The rank of the valuation corresponding to~$x''$ is at least~$2$.
 In particular, we cannot run into the same trap as for~$x$, or in other words~$x''$ cannot be divisorial in the fiber over $g(x'')$.
 Replacing~$x$ by~$x''$, we may assume that~$x$ is not divisorial as a point of~$C$ from the start.

 We consider the basechange of~$f$ to~$C$
 \[
  \begin{tikzcd}
   C_Y	\ar[r]	\ar[d,"f_C"]	& Y	\ar[d,"f"]	\\
   C		\ar[r]					& X.
  \end{tikzcd}
 \]
 Then~$y$ can be considered as a point of~$C_Y$ and~$x$ as a point of~$C$.
 In particular, $f_C$ is wild at $y$ as the completed residue field extension $\widehat{k(y)}/\widehat{k(x)}$ is wild.
 Using \cref{wild_locus_curves} we find a divisorial point $y'$ in the wild locus of~$f_C$.
 Then~$y'$ is also contained in the wild locus of~$f$.
 Moreover, the fact that $y'$ is divisorial as a point of~$C_Y$ and $y$ is not, translates to $F_{y'} > F_y$.
\end{proof}

\begin{corollary} \label{tame_divisorial_points}
 Let $f\colon Y \to X$ be an étale morphism of smooth adic spaces over an affinoid field $(k,k^+)$ of finite rank.
 Then~$f$ is tame if and only if it is tame at all divisorial points.
\end{corollary}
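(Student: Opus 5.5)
The corollary follows almost formally from \cref{R_f_divisorial_point}, once we recall what ``tame'' means here: $f$ is tame exactly when its wild locus $R_f$ is empty. So both directions amount to statements about $R_f$.

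\emph{The trivial direction.} If $f$ is tame, then every completed residue field extension $\widehat{k(y)}/\widehat{k(f(y))}$ is tamely ramified. In particular this holds at the divisorial points $y \in Y$.

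\emph{The converse.} Assume that $f$ is tame at every divisorial point of~$Y$, i.e. $R_f$ contains no divisorial point. By \cref{R_f_divisorial_point}, $R_f$ is either empty or contains a divisorial point. The second alternative is excluded, so $R_f = \emptyset$ and $f$ is tame.

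\emph{The only point needing care} is the phrase ``tame at all divisorial points''. It could refer to divisorial points of~$Y$, or to divisorial points of~$X$, i.e. tameness at every point of $Y$ lying over a divisorial point of~$X$. \Cref{divisorial_etale} shows that $y$ is divisorial if and only if $f(y)$ is, so the two readings agree and the argument above covers both.

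No real obstacle arises. All the work was done in \cref{R_f_divisorial_point}, which in turn rests on the curve case \cref{wild_locus_curves} and on the fibration argument via \cref{factorization_curves}.
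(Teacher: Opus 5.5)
Your proposal is correct and is exactly the paper's argument: the paper's proof of this corollary just says it is a reformulation of \cref{R_f_divisorial_point}, which is what you spell out. Your remark that \cref{divisorial_etale} makes the ``divisorial in $X$'' and ``divisorial in $Y$'' readings agree is a harmless and accurate addition.
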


\begin{proof}
 This is a reformulation of the statement of \cref{R_f_divisorial_point}.
\end{proof}

\section{New insights on different notions of tameness} \label{sect:tameness_notions}

Let $X \to S$ be a morphism of finite type of noetherian schemes.
In this section we study finite étale morphisms $f\colon Y \to X$.
There are several ways of defining what it means for $f\colon Y \to X$ to be \emph{tame} with respect to~$S$.
In their article \cite{KeSch10} Moritz Kerz and Alexander Schmidt compare various approaches to the definition of tameness.
For some implications they need to assume resolution of singularities.
Our goal is to remove this assumption by resorting to the methods developed above.

We start by recalling the different concepts of tameness treated in \cite{KeSch10}.
We assume that~$S$ is integral, pure dimensional, separated, and excellent and consider the category $\Sch/S$ of separated schemes of finite type over~$S$.
The strongest tameness condition (which is, however, not included in \cite{KeSch10}) is the following.

\begin{definition}
 Let $f \colon Y \to X$ be a finite étale morphism in $\Sch/S$.
 We say that~$f$ is \emph{tame} if the induced morphism of adic spaces
 \[
  \Spa(Y,S) \longrightarrow \Spa(X,S)
 \]
 is tame.
\end{definition}

Tameness in this strong sense implies all other variants of tameness discussed in \cite{KeSch10}.
For the convenience of the reader we recall these in the following paragraphs.

We start with tameness for curves.
In some cases the dimension of schemes in $\Sch/S$ does not behave as expected.
For instance if $S$ is the spectrum of a discrete valuation ring with field of fractions $K$, $\Spec K$ is an open subscheme of~$S$ of dimension~$0$ but~$S$ has dimension~$1$.
But for our purposes we would rather like $\Spec K$ to have dimension~$1$ in this context.
Therefore, we use a modified definition of dimension that depends on the base scheme~$S$ (see \cite[\S~3]{KeSch10}).

For an integral scheme $X \in \Sch/S$ with scheme theoretic image $T \subseteq S$ (i.e. topologically~$T$ is the closure of the image of~$X$ {as~$X$ is quasi-compact), we define the $S$-\emph{dimension} of~$X$ by
\[
 \dim_S X := \trdeg ( K(X)/K(T)) + \dim T.
\]
Now we can define a \emph{curve} in $\Sch/S$ as an integral scheme $C \in \Sch/S$ with $\dim_S C = 1$.
By this definition $\Spec \Q_p$ is a curve over $\Z_p$.
If we start with a smooth morphism $X \to \Spec \Z_p$ of relative dimension~$1$, the special fiber is a curve over~$\Z_p$ but the generic fiber is not.
Moreover, the closure in~$X$ of every closed point of the generic fiber defines a curve over~$\Z_p$ (whether or not this closure contains points of the special fiber).

Every curve~$C$ has a unique compactification~$\bar{C}$ over~$S$ that is regular at the points of $\bar{C}\setminus C$.
For a finite étale morphism $C' \to C$ the compactifications of~$C$ and~$C'$ are compatible, i.e. we get an induced morphism
\[
 \bar{C}' \longrightarrow \bar{C}
\]
of compactifications over~$S$.
The finitely many points of $\bar{C}' \setminus C$ correspond to valuations of the function field $k(C')$.
We say that $C' \to C$ is \emph{tame} if these valuations are tame in the extension $k(C')/k(C)$ of function fields.

Tameness for curves lies the foundation for the definition of curve-tameness:

\begin{definition}
 A finite étale morphism $f\colon Y \to X$ in $\Sch/S$ is \emph{curve tame} if the base change of~$f$ to any curve in $\Sch/S$ is tame.
\end{definition}

\begin{lemma} \label{curve_tame_bc}
 Curve-tameness is stable under base change.
\end{lemma}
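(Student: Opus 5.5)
The plan is to unwind the definitions and reduce to the fact that fiber products compose. Let $f\colon Y \to X$ be a curve tame finite étale morphism in $\Sch/S$ and $h\colon X' \to X$ any morphism in $\Sch/S$, with base change $f'\colon Y' := Y \times_X X' \to X'$. First, $f'$ is again finite étale, and $Y'$ is again separated of finite type over~$S$, so $f'$ is a finite étale morphism in $\Sch/S$. The question is therefore purely about the curve condition.

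Next, let $C \in \Sch/S$ be a curve and $c\colon C \to X'$ a morphism. By the definition of curve tameness we must show that $f'_C\colon Y' \times_{X'} C \to C$ is tame. By transitivity of fiber products,
\[
 Y' \times_{X'} C = (Y \times_X X') \times_{X'} C \cong Y \times_X C,
\]
where $C$ is regarded as an $X$-scheme via $h \circ c$. This identification is compatible with the projections to~$C$, so $f'_C$ is isomorphic over~$C$ to the base change of~$f$ along the composite $h \circ c\colon C \to X$. Since~$f$ is curve tame and~$C$ is a curve in $\Sch/S$ mapping to~$X$, this base change is tame, and hence so is $f'_C$.

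The only point needing care is that tameness of a finite étale cover of a curve is intrinsic to the cover $C'' \to C$. It is defined through the regular compactifications $\bar{C}'' \to \bar{C}$ over~$S$ and the valuations at the boundary points of the function field extension. Consequently it is invariant under isomorphism of covers over~$C$, and it does not depend on the morphism through which~$C$ maps to~$X$ or~$X'$. If $Y \times_X C$ is not integral (it may be a disjoint union of several components), curve tameness should be read componentwise for each component over~$C$, and the identification above respects the decomposition into components. I expect no real obstacle here; the argument amounts to this bookkeeping.
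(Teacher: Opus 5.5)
Your argument is correct and is exactly the definitional argument the paper has in mind; the paper states the lemma without proof. For $X'\to X$ in $\Sch/S$, transitivity of fibre products identifies the base change of $f'$ along a curve $C\to X'$ with the base change of $f$ along $C\to X'\to X$, and tameness of a cover of a curve depends only on the cover.

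One caveat concerns the paper rather than your proof. In the proof of Theorem~\ref{divisor_tame_implies_tame} the paper also applies the lemma after a change of base to $S'=S\times_T\Spec k^+$, where $X'$ and curves over $S'$ need not lie in $\Sch/S$. That use needs an additional spreading-out step, which your argument for base change within $\Sch/S$ does not cover.
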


Curve tameness of $f\colon  Y \to X$ is equivalent to tameness of $\Spa(Y,S) \to \Spa(X,S)$ at points of the following type.
Let $y \in Y$ be a point such that its closure $\overline{\{y\}}$ is a curve.
In non-degenerate cases this means that~$y$ is of codimension~$1$.
Let~$v$ be an $S$-valuation of $k(y)$.
Then $(y,v)$ defines a point of $\Spa(Y,S)$ and curve-tameness requires tameness of $\Spa(Y,S) \to \Spa(X,S)$ at these points.
It is enough to check tameness at those points $(y,v)$ where~$v$ does not have center on~$Y$.
Otherwise $k(y)/k(x)$ (where $x = f(y)$) is automatically unramified with respect to~$v$ due to the étaleness of~$f$.

The concept that is closesest to the tameness of $\Spa(Y,S) \to \Spa(X,S)$ is the following

\begin{definition}
 A finite étale morphism $f\colon  Y \to X$ of normal connected schemes in $\Sch/S$ is \emph{valuation-tame} if for any $S$-valuation of $k(X)$ the function field extension $k(Y)/k(X)$ is tame.
\end{definition}

Equivalently, one can say that~$f$ is valuation-tame if $\Spa(Y,S) \to \Spa(X,S)$ is tame at all points whose support is the generic point of~$Y$.
Considerably weaker at first sight is the following concept.

\begin{definition}
 A finite étale morphism $f\colon  Y \to X$ of normal connected schemes in $\Sch/S$ is \emph{discrete-valuation-tame} if for any $S$-valuation of $k(X)$ which is a composition of $\dim_S X$ many discrete valuations of rank~$1$, the function field extension $k(Y)/k(X)$ is tame.
\end{definition}

For the definition of divisor-tameness we still assume that $X \in \Sch/S$ is normal.
Given a finite étale morphism $f \colon Y \to X$ we consider normal compactifications~$\bar{X}$ of~$X$ over~$S$.
Any prime divisor~$D$ of~$\bar{X}$ defines a valuation $v_D$ of the function field $k(X)$.

\begin{definition}
 A finite étale morphism $f\colon Y \to X$ of connected normal schemes in $\Sch/S$ is \emph{divisor-tame} if for any normal compactification~$\bar{X}$ over~$S$ and any divisor $D \subseteq \bar{X}$ the valuation~$v_D$ is tame in the function field extension $k(Y)/k(X)$.
\end{definition}

Because $f\colon Y \to X$ is étale, it suffices to check tameness for valuations~$v_D$ such that the divisor~$D$ is contained in $\bar{X}\setminus X$.

\begin{proposition} \label{description_divisorial_points_Spa}
 Let $(k,k^+)$ be a discrete affinoid field such that~$k^+$ is an excellent discrete valuation ring.
 We consider a diagram
 \[
  \begin{tikzcd}
   X	\ar[r]	\ar[d]	& \Spec k	\ar[d]	\\
   S	\ar[r]			& \Spec k^+
  \end{tikzcd}
 \]
 of finite type scheme morphisms such that~$S$ is integral, pure dimensional, and separated and~$X$ is normal and separated.
 Then the set of points $v_D$ for divisors~$D$ of a normal compactification of~$X$ over~$S$ are precisely the divisorial points of $\Spa(X,S)$ as an adic space over $(k,k^+)$.
\end{proposition}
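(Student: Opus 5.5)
We prove the two inclusions separately, working throughout with the explicit description of points of $\Spa(X,S)$: triples consisting of a point $\xi\in X$ (the support) and a valuation $v$ on $k(\xi)$ whose valuation ring $R_v$ admits a morphism $\Spec R_v\to S$ compatible with $\Spec k(\xi)\to X\to S$. Over the discrete field $(k,k^+)$ the structure map records the restriction of $v$ to $k$. It is either trivial (the generic point $\eta$ of $\Spa(k,k^+)$) or the discrete valuation of $k^+$ (the closed point $s$). Throughout we use the definitions of vertical and horizontal divisoriality, together with $\dim_x\Spa(X,S)=\dim_S X$ for $X$ pure dimensional. If $X$ is not equidimensional, we work componentwise.

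First inclusion: each $v_D$ is divisorial. Let $\bar X$ be a normal compactification of $X$ over $S$ and $D\subseteq\bar X$ a prime divisor. The point $v_D$ has support the generic point of $X$, and $v_D$ is a discrete rank one valuation whose residue field is $k(D)$. We split into two cases.

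If $D$ maps into the closed fiber over $\Spec k^+$, then $v_D$ restricts nontrivially to $k$. Its residue field $k(D)$ has transcendence degree $\dim_S X-1$ over $k^\succ$, by the dimension formula for excellent schemes (universally catenary) applied to $\bar X\to S\to\Spec k^+$. Since $tr.c_k k(X)=\trdeg_k k(X)=\dim_S X-1$ in the $\Spa$ sense, this gives $F=tr.c$, i.e.\ vertical divisoriality.

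Otherwise $D$ dominates the generic fiber, $v_D$ is trivial on $k$, and $\trdeg_k k(D)=\trdeg_k k(X)-1$. This is horizontal divisoriality.

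The bookkeeping between $\dim_S$ and the transcendence-degree dimension of $\Spa(X,S)$ needs care, but it reduces to the dimension formula.

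Second inclusion: each divisorial point is some $v_D$. Let $x=(\xi,v)$ be divisorial. We first show that $\xi$ is the generic point of $X$. Indeed, $tr.c_k k(x)=\dim_x X$ forces $\trdeg_k k(\xi)=\dim X_k$. Next we show that $v$ is discrete of rank one with finitely generated residue field of the correct transcendence degree. In the vertical case, equality in Abhyankar's inequality with $E=0$ over a rank one base gives rank one (the paper notes both fields have the same rank). The value group is then a finite extension of $\Z$, hence discrete, and $K^\succ$ is finitely generated by the Abhyankar equality case (Zariski's theorem, as in \cite{EP2005}). The horizontal case is the classical statement that a valuation on $k(X)/k$ with $\trdeg$ of the residue field equal to $\trdeg-1$ is a divisorial valuation.

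The main step is then to realize such a valuation on some normal compactification. Start from any compactification $\bar X$ over $S$ (Nagata), normalized; normalization is finite because $S$ is excellent. The valuation $v$ is an $S$-valuation and so has a center $z\in\bar X$. If $\overline{\{z\}}$ has codimension one, we are done, since $\cO_{\bar X,z}$ is then a DVR dominated by, hence equal to, $R_v$. Otherwise choose generators $a_1,\dots,a_m$ of the residue field $k(v)$ over $k(z)$ lifted to $R_v$, write them as fractions $b_i/c$ of local sections, and blow up $\bar X$ along the ideal $(c,b_1,\dots,b_m)$ (extended to a coherent ideal). Then normalize. On the new model the center $z'$ has residue field containing $k(v)$, so $\trdeg k(z')=\trdeg k(v)$ and $z'$ has codimension one. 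This is Zariski's classical argument that Abhyankar valuations of maximal residue transcendence degree are divisorial. Since $X$ is separated, the blowup can be chosen to be an isomorphism over $X$ (the center lies in the boundary, as $v$ has no center on $X$, or we simply take the ideal cosupported away from $X$). The modified $\bar X'$ is therefore still a normal compactification of $X$ over $S$, and $v=v_{D}$ for $D=\overline{\{z'\}}$.

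I expect this last step to be the main obstacle. One must guarantee that the blowup keeps $X$ open, remains of finite type with finite normalization (excellence), and truly produces a codimension one center via the dimension formula.
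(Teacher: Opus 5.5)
Your route is the paper's: show that a divisorial point has generic support and is a rank-one discrete valuation with equality in Abhyankar's inequality, then realize it as some $v_D$ on a suitable normal compactification. The only difference is that you prove the realization step yourself, by Zariski's blow-up-and-normalize argument plus the dimension formula over $k^+$, where the paper just cites \cite[Proposition~3.2]{KeSch10}.

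\textbf{The gap.} The step that fails is the claim that $v$ has no center on $X$. It is right for vertically divisorial points: a uniformizer $\pi$ of $k^+$ has $v(\pi)>0$ but is a unit on $X$. A horizontally divisorial point, however, is trivial on $k$ and can be centered at a point $z\in X$.
\begin{itemize}
\item If $z$ has codimension one, then $v=v_D$ for $D=\overline{\{z\}}$ and nothing is lost.
\item If $z$ has codimension at least two, a modification that is an isomorphism over $X$ cannot move the center, so your construction has nothing to work with.
\end{itemize}
In the second case the conclusion itself fails. On every normal compactification $\bar X$ of $X$ the center of $v$ is still $z$, and $\cO_{\bar X,z}=\cO_{X,z}$ is not a DVR. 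For instance, take $k^+=\Z_p$, $S=\Spec\Z_p$, $X=\A^2_{\Q_p}$, and $v$ the $\m$-adic order at the origin. Then $\trdeg_k k(X)=2=F_v+1$, so $v$ is horizontally divisorial, yet $v\neq v_D$ for every $D$.

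So what you can actually prove is the statement restricted to divisorial points that either have no center on $X$ or are centered in codimension one. This is all the subsequent corollary on divisor-tameness uses, since an étale $f$ is automatically tame at points centered on $X$. The paper's one-line appeal to Kerz--Schmidt passes over the same case.

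\textbf{A missing line in the remaining case.} For points with center $z\in\bar X\setminus X$ your argument works, but the parenthetical ``take the ideal cosupported away from $X$'' needs justification: a coherent extension $\mathcal I$ of $(c,b_1,\dots,b_m)$ need not be trivial on $X$. The fix is as follows.
\begin{itemize}
\item Replace $\mathcal I$ by $\mathcal I+\mathcal J^N$, where $\mathcal J$ is the ideal of $\bar X\setminus X$.
\item Choose $N$ so large that $N\cdot v(\mathcal J_z)>v(c)$. This is possible because $\mathcal J_z\subseteq\m_z\subseteq\m_v$ and $v$ has value group $\Z$.
\item Then $c$ still has minimal value among the generators, so the $b_i/c$ are regular at the new center of $v$ and its residue field is $k(v)$.
\item The blow-up, and its normalization (since $X$ is normal), is an isomorphism over $X$.
\end{itemize}
The dimension formula then gives codimension one exactly as you say.
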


\begin{proof}
 It is clear that the points~$v_D$ are divisorial.
 Suppose now that~$v$ is a divisorial point of $\Spa(X,S)$.
 The support of~$v$ is the generic point of~$X$ and since~$k^+$ has rank~$1$, the rank of~$v$ has to be~$1$ as well.
 If~$v$ is horizontally divisorial, the restriction of~$v$ to~$k$ is the trivial valuation and
 \[
  F_v = \dim_S X - 1.
 \]
 If~$v$ is vertically divisorial, the restriction of~$v$ to~$k$ has valuation ring~$k^+$ and
 \[
  F_v = \dim_S X.
 \]
 In both cases this means that Abhyankar's inequality is an equality.
 In \cite{KeSch10} they call these valuations \emph{geometric}.
 Therefore, by \cite[Proposition~3.2]{KeSch10}, there is a normal compactification~$\bar{X}$ of~$X$ over~$S$ such that the center $c_v \in \bar{X}$ of~$v$ has codimension~$1$ and satisfies
 \[
  k(c_v) = k(v)^\succ.
 \]
 Then the local ring $\cO_{\bar{X},c_v}$ is a discrete valuation ring dominated by the valuation ring $k(v)^+$ of $k(X)$.
 This implies
 \[
  k(v)^+ = \cO_{\bar{X},c_v}
 \]
 or in other words $v = v_D$ for the divisor $D := \overline{\{c_v\}}$.
\end{proof}

As a direct consequence we obtain the following result.

\begin{corollary}
 In the setting of \cref{description_divisorial_points_Spa}, let $f\colon  Y \to X$ be a finite étale morphism.
 Then~$f$ is divisor-tame if and only if $\Spa(Y,S) \to \Spa(X,S)$ is tame at all divisorial points.
\end{corollary}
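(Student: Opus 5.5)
The plan is to deduce the corollary from Proposition~\ref{description_divisorial_points_Spa} by comparing two conditions on the same set of valuations. Set $\mathcal{X} := \Spa(X,S)$ and $\mathcal{Y} := \Spa(Y,S)$, regarded as adic spaces over the discrete affinoid field $(k,k^+)$. We may reduce to $X$ connected by treating components separately. Then $Y$ is a finite disjoint union of connected normal schemes, and we argue component by component, so that $k(Y)/k(X)$ is a finite separable extension of function fields.

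\emph{Divisorial points.} A divisorial point $y\in\mathcal{Y}$ has support the generic point of $Y$; this is shown in the proof of Proposition~\ref{description_divisorial_points_Spa}, using $tr.c_k k(y)=\dim_y$. By Lemma~\ref{divisorial_etale}, $y$ is divisorial if and only if its image $x\in\mathcal{X}$ is. So the divisorial points of $\mathcal{Y}$ are exactly the extensions to $k(Y)$ of the divisorial valuations $v$ of $k(X)$. Moreover, tameness of $\mathcal{Y}\to\mathcal{X}$ at $y$ means that the completed residue field extension $\widehat{k(y)}/\widehat{k(x)}$ is tame. This is the completion of $(k(Y),w)/(k(X),v)$, and as in \S\ref{sect:affine_line} tameness is insensitive to completion for separable extensions.

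\emph{Comparison with divisor-tameness.} Divisor-tameness asks that each valuation $v_D$ be tame in $k(Y)/k(X)$. I read this as: every extension $w$ of $v_D$ to $k(Y)$ is tame over $v_D$ (equivalently, $v_D$ is tamely ramified in the extension). By Proposition~\ref{description_divisorial_points_Spa}, the valuations $v_D$ are exactly the divisorial points of $\mathcal{X}$. Combining this with the previous paragraph, both conditions say the same thing: for every divisorial point $x\in\mathcal{X}$ and every $y$ over it, the extension $k(y)/k(x)$ is tame.

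\emph{Main obstacle.} The one delicate point is checking that Proposition~\ref{description_divisorial_points_Spa} applies to $Y$ as well as to $X$, and that its proof's claim about supports holds. Both hypotheses hold because $Y$ is normal, separated and finite over $X$. I also need that every point of $\mathcal{Y}$ over a divisorial $x$ corresponds to an extension of $v$ to $k(Y)$ dominating a point of $Y$'s compactification. This is automatic: since $Y\to X$ is finite, every valuation $w$ on $k(Y)$ with image $v$ centered on $S$ gives a point of $\Spa(Y,S)$.
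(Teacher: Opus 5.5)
Your proposal is correct and follows essentially the same route as the paper, which states this corollary as a direct consequence of \cref{description_divisorial_points_Spa}: the valuations $v_D$ are exactly the divisorial points of $\Spa(X,S)$, and \cref{divisorial_etale} together with the completion-invariance of tameness identifies the divisorial points of $\Spa(Y,S)$ with the extensions of these $v_D$ to $k(Y)$. The step you flag as the ``main obstacle'', applying the proposition to $Y$ as well, is not actually needed, since \cref{divisorial_etale} already transfers divisoriality from $X$ to $Y$.
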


Recall that a \emph{Parshin chain} on a scheme $X \in \Sch/S$ is a tuple $(P_0,P_1,\ldots,P_r)$ of points of~$X$ satisfying
\begin{itemize}
 \item $\overline{\{P_0\}} \subset \overline{\{P_1\}} \subset \ldots \subset \overline{\{P_r\}}$,
 \item $\dim_S \overline{\{P_i\}} = i$ for $i=0,\ldots,r$.
\end{itemize}
We call~$r$ the \emph{length} of the Parshin chain.

Suppose now that~$X$ is normal and connected.
For a discrete valuation~$v$ of $k(X)$ of rank~$r$ let us denote by~$v_i$ the unique generalization of rank~$i$
Then~$v$ \emph{dominates} the Parshin chain $(P_0,\ldots,P_r)$ if for each $i=0,\ldots,r$ the valuation ring $\cO_{v_i}$ dominates the local ring $\cO_{X,P_i}$.
There are only finitely many discrete valuation rings dominating a given Parshin chain and if the Parshin chain determines a regular sequence, there is a unique such valuation.

\begin{definition}
 Let $f \colon  Y \to X$ be a finite étale morphism of normal schemes in $\Sch/S$.
 \begin{enumerate}[(i)]
  \item	$f$ is \emph{chain tame} if there exists a normal compactification~$\bar{X}$ of~$X$ over~$S$ such that for every Parshin chain on~$\bar{X}$ of length $\dim_S X$ every discrete valuation dominating it is tame in $k(Y)/k(X)$.
  \item	$f$ is \emph{weakly chain tame} if there exists a normal compactification~$\bar{X}$ of~$X$ over~$S$ such that for every Parshin chain $(P_0,\ldots,P_r)$ on~$\bar{X}$ of length $r = \dim_S X$ such that $P_1,\ldots,P_r \in X$ every discrete valuation dominating it is tame in $k(Y)/k(X)$.
 \end{enumerate}
\end{definition}

The main theorem of \cite{KeSch10} comparing the above defined notions of tameness is Theorem~4.4.
It asserts (in slightly simplified form) the following.

\begin{theorem}[\cite{KeSch10}, Theorem~4.4] \label{comparison_tameness_KeSch}
 Let $f \colon  Y \to X$ be a finite étale morphism of regular schemes in $\Sch/S$.
 The following are equivalent:
 \begin{enumerate}[(i)]
  \item $f$ is curve-tame,
  \item $f$ is divisor-tame,
  \item $f$ is discrete-valuation tame,
  \item $f$ is chain-tame,
  \item $f$ is weakly chain-tame,
 \end{enumerate}
 In case resolution of singularities holds, these are equivalent to
 \begin{enumerate}[(i)]
  \setcounter{enumi}{5}
  \item $f$ is valuation-tame.
 \end{enumerate}
 If~$X$ is regular and admits a regular compactification~$\bar{X}$ over~$S$ such that $\bar{X} \setminus X$ is the support of an snc divisor, all of the above are equivalent to
 \begin{enumerate}[(i)]
  \setcounter{enumi}{6}
  \item $k(Y)/k(X)$ is tame at the valuations $v_D$ corresponding to the components of the boundary divisor.
 \end{enumerate}
\end{theorem}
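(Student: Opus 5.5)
Since \cref{comparison_tameness_KeSch} is quoted from \cite{KeSch10}, I only outline how I would prove it. The plan is to run a cycle of implications among (i)--(v), and then to attach (vi) and (vii) using the extra hypotheses.

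\textbf{The cycle (iii) $\Rightarrow$ (iv) $\Rightarrow$ (v) $\Rightarrow$ (i) $\Rightarrow$ (ii) $\Rightarrow$ (iii).}
\begin{enumerate}[(a)]
 \item (iii) $\Rightarrow$ (iv). Any discrete valuation dominating a Parshin chain of length $\dim_S X$ is a composition of $\dim_S X$ discrete rank one valuations. So tameness at such valuations gives chain tameness for every normal compactification.
 \item (iv) $\Rightarrow$ (v). This is formal, since weak chain tameness only tests a subset of the chains.
 \item (v) $\Rightarrow$ (i). Let $C \to X$ be a curve and $c$ a boundary point of its regular compactification. Fix the compactification $\bar{X}$ from (v) and look at the closure of the image of $C$ in $\bar{X}$. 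I would extend the image of $C$ to a Parshin chain $(P_0,\ldots,P_r)$ with $P_1,\ldots,P_r \in X$, where $P_1$ is the generic point of the image curve and $P_0$ is the image of $c$. Regularity of $X$ lets me choose the $P_i$ with $i\ge 1$ as a regular sequence near $P_1$. The boundary valuation of $C$ at $c$ then appears as the "first step" of a discrete valuation dominating the chain, and tameness of that valuation restricts to tameness of the curve.
 \item (i) $\Rightarrow$ (ii). Let $D \subseteq \bar{X} \setminus X$ be a prime divisor. At its generic point, $\bar{X}$ is regular since $\bar{X}$ is normal. I would use a Bertini-type argument over the excellent base to choose a curve $C \subseteq X$ whose closure meets $D$ transversally at a point lying in the regular locus of $\bar{X}$ and of $D$. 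Local purity (Zariski--Nagata) together with Abhyankar's lemma then says that wild ramification along $D$ would be detected by the restriction to $C$. This contradicts curve tameness.
 \item (ii) $\Rightarrow$ (iii). I would induct on $\dim_S X$. Write a valuation $v$ that is a composition of $\dim_S X$ discrete valuations. Its rank one part $v_1$ is a divisorial valuation $v_D$ on some normal compactification, by \cite[Proposition~3.2]{KeSch10}; it is tame by hypothesis. The remaining part of $v$ is a valuation of the function field of $D$. Over a dense open of $D$, tameness at $v_D$ lets me use Abhyankar's lemma to write the covering as a Kummer extension of degree prime to $p$ composed with something étale along $D$. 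This reduces the claim to the induced covering of (an open of) $D$, which is divisor tame again, so induction applies.
\end{enumerate}

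\textbf{Attaching (vi) and (vii).} For (vi): valuation tameness trivially implies (iii). For the converse, under resolution of singularities I would use local uniformization of an arbitrary $S$-valuation on an snc model. With an snc boundary, Abhyankar's lemma shows that tameness at the boundary divisors $v_D$ already forces tameness at every valuation centred on the boundary. The same Abhyankar argument on the given snc compactification yields (vii) $\Leftrightarrow$ (vi). Since (vii) is an instance of (ii), this closes the loop.

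\textbf{Main obstacle.} I expect the hardest step to be (i) $\Rightarrow$ (ii). One needs curves through a general point of a boundary divisor, with controlled intersection, over a possibly mixed-characteristic excellent base where a Bertini theorem is delicate. One also needs a purity statement guaranteeing that wildness at $v_D$ is visible on such a curve. I would first try working in the local ring $\cO_{\bar{X},\eta_D}$ after a finite tame base change, and choose the curve there.
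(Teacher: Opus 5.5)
Your cycle is sensibly organized. The formal steps (iii)$\Rightarrow$(iv)$\Rightarrow$(v) are fine, as is (v)$\Rightarrow$(i) via a flag of regular parameters at the generic point of the image curve (the same device as in \cref{wild_horizontal_generalization}). Attaching (vi) and (vii) through Abhyankar's lemma on an snc model also works. The trouble is that the two implications carrying the real content are not proved.

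\textbf{The step (i)$\Rightarrow$(ii).} You claim that Zariski--Nagata purity together with Abhyankar's lemma shows that wild ramification along $D$ is detected on a curve meeting $D$ transversally at a point where $\bar X$ and $D$ are regular. Neither tool gives this. Purity only says that the branch locus is a divisor. Abhyankar's lemma presupposes tameness along $D$, which is exactly what you want to prove.

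The claim is also false for such curves. Take $S=\Spec k$ with $k$ algebraically closed of characteristic $p$, $X=\A^2_k\setminus\{x=0\}$ and $D=\{x=0\}$. The Artin--Schreier cover $z^p-z=y/x^p$ is wild at $v_D$: putting $u=xz$ gives $u^p-x^{p-1}u=y$, so the residue extension is the inseparable extension $k(y^{1/p})/k(y)$. Now restrict to the curve $y=x^p$. It meets $D$ transversally at the origin, where $\bar X$ and $D$ are regular and no other boundary component passes. On it the cover becomes $z^p-z=1$, which splits completely.

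So transversality at a good point is the wrong condition. Any genericity you impose must be tied to the ramification data along $D$, and a Bertini choice plus purity plus Abhyankar does not supply that. This is the missing idea. As the paper remarks, Kerz--Schmidt handle it by reducing to surfaces, resolving the singularities of the surface (available unconditionally for excellent surfaces), and then doing a case-by-case analysis of the ramification to produce a curve on which the cover is wild.

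\textbf{The step (ii)$\Rightarrow$(iii).} This has a similar hole. The phrase ``the induced covering of (an open of) $D$ is divisor tame again'' is the entire content of the inductive step, and it is not justified.

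A prime divisor $E$ on a compactification of $D^\circ\subseteq D$ corresponds to the rank-two valuation $v_D\circ v_E$ of $k(X)$. Condition (ii) says nothing directly about such valuations. Moreover, the centres of these $v_E$ lie in $D\setminus D^\circ$. That is precisely where $\bar X$ or $D$ may be singular, or where other boundary components meet $D$. Your Kummer-times-\'etale description over $D^\circ$ therefore gives no information there.

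Passing from rank-one divisorial valuations to composite ones needs an additional local argument at the centre of $v_D\circ v_E$. For example, in the two-dimensional regular local ring there, make the boundary snc by embedded resolution of the boundary curve. The exceptional divisors produced are boundary divisors of new normal compactifications, hence tame by (ii), and then Abhyankar's lemma applies. Without some such step your induction does not close.
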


The most difficult part is to show the implication from (i) to (ii).
Here the authors reduce to the case of surfaces and then do a thorough case by case study after resolving the singularities of the surface.
Our contribution to this topic is to remove the assumption on resolution of singularities when comparing (i)-(v) with~(vi).

\begin{theorem} \label{divisor_tame_implies_tame}
 Suppose~$S$ is of finite type over a Dedekind scheme~$T$.
 Let $f \colon  Y \to X$ be a finite étale morphism of regular schemes in $\Sch/S$.
 If~$f$ is divisor-tame, it is tame.
\end{theorem}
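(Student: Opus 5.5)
The plan is to deduce the statement from \cref{R_f_divisorial_point}, applied to $\Spa(Y,S) \to \Spa(X,S)$, after putting ourselves over a discrete affinoid field in the sense of \cref{description_divisorial_points_Spa}.

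\textbf{Step 1: reduction to a local base.} Tameness of $\Spa(Y,S) \to \Spa(X,S)$ is a pointwise condition, and each point of $\Spa(X,S)$ lies over a point $t$ of the Dedekind scheme $T$. So I would argue by contradiction: suppose the wild locus contains a point $y$, and let $t \in T$ be the image of the center of $y$.

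If the valuation is trivial on the image of $\cO_{T,t}$, or if $t$ is the generic point, work over the trivially valued field $K = k(T)$. Otherwise $\cO_{T,t}$ is an excellent discrete valuation ring; base change $X$, $Y$, $S$ to $\Spec \cO_{T,t}$. Divisor-tameness should be preserved by this localization, since a normal compactification over the localized $S$ spreads out, after taking a closure, to a normal compactification over $S$ with the same boundary divisors. The adic spaces only shrink, and $y$ survives in the base change.

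We thus get a discrete affinoid field $(k,k^+) = (K(T), \cO_{T,t})$ of rank at most one, with $X \to \Spec k$ and $S \to \Spec k^+$ as in \cref{description_divisorial_points_Spa}. The rank-zero case, where $k^+ = k$, should be handled by the same argument with only horizontally divisorial points; \cref{description_divisorial_points_Spa} would need the analogous version there, via \cite[Proposition~3.2]{KeSch10}.

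\textbf{Step 2: smoothness over $(k,k^+)$.} I need $\Spa(X,S)$ and $\Spa(Y,S)$ to be smooth adic spaces over $(k,k^+)$. Here regularity of $X$ is what we have, but smoothness over $k$ can fail in characteristic $p$ if $k$ is imperfect. I expect this to be the main obstacle.

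My first attempt is to pass to a finite purely inseparable extension $k'/k$, or to work generically. By \cref{purely_inseparable_wild} and its higher-dimensional analogue, whose proof is identical since it only uses field theory, the wild degree is unchanged under such a base change, and divisorial points correspond bijectively. The catch is that the reduced base change need not be regular. Alternatively, I would check that the proof of \cref{R_f_divisorial_point} only uses regularity: it enters through \cref{wild_horizontal_generalization}, which uses regular local rings, and through \cref{factorization_curves}, which needs separable transcendence bases as in \cref{transcendence_basis}. At points with generic support the residue field $k(X)$ is separable over $k$ after replacing $k$ by its relative algebraic closure, so $X$ is generically smooth. Shrinking $X$ to its smooth locus keeps all points with generic support, which is enough for the curve fibration argument; the remaining points are handled by \cref{wild_horizontal_generalization}.

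\textbf{Step 3: conclusion.} With this in place, \cref{R_f_divisorial_point} yields a divisorial point in the wild locus of $\Spa(Y,S) \to \Spa(X,S)$. By \cref{divisorial_etale} its image is divisorial in $\Spa(X,S)$. By \cref{description_divisorial_points_Spa} it equals $v_D$ for a prime divisor $D$ of some normal compactification $\bar X$ over $S$, and $v_D$ is wild in $k(Y)/k(X)$. This contradicts divisor-tameness, so the wild locus is empty and $f$ is tame.
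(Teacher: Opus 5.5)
Your proposal has two genuine gaps. Both come from the same fact: divisor-tameness of $f$ only constrains the valuations $v_D$ of the function field $k(X)$. So you can use it only through divisorial points of $\Spa(X,S)$ itself, which have generic support, unless you first transport the hypothesis.

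The first gap is in Step 1, which only handles wild points $y=(\underline y,v)$ whose support lies over the generic point of $T$. Base change to $\Spec\cO_{T,t}$ does not give a map $X\to\Spec K(T)$. To reach the setting of \cref{description_divisorial_points_Spa} you must pass to the generic fibre, and that discards exactly the points with $\underline y$ over a closed point $t$. For those points, $v$ is automatically trivial on $k(t)$, so the natural base is $(k(t),k(t))$, not $K(T)$ as your case distinction says. Moreover, the divisorial points that \cref{R_f_divisorial_point} produces on such a fibre are valuations of function fields of components of $X_t$. Divisor-tameness of $f$ says nothing about those.

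This is precisely why the paper first converts divisor-tameness into curve-tameness via \cref{comparison_tameness_KeSch}. Curve-tameness is stable under base change, so it survives passage to the fibre over the image of $\underline y$ in $T$ and to $\overline{\{f(y)\}}$, which makes $y$ a generic point. On a smooth dense open it is turned back into divisor-tameness by \cref{comparison_tameness_KeSch} again, and then \cref{tame_divisorial_points} applies.

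The second gap is that the key claim of Step 2 is false: replacing $k$ by its algebraic closure in $k(X)$ does not make $k(X)/k$ separable. Take $k=\mathbb{F}_p(s,t)$ and $K=k(y,z)$ with $z^p=s+ty^p$. Then $k$ is algebraically closed in $K$, but $z-s^{1/p}-t^{1/p}y$ is a nonzero nilpotent in $K\otimes_k k^{1/p}$. So every regular curve over $k$ with function field $K$ is nowhere smooth. Generic smoothness therefore genuinely requires a purely inseparable base change, after which regularity is lost, as you observed yourself. The paper can afford this only because it has already arranged that $y$ has generic support, so that a smooth dense open of $(X'_{k'})_{\mathrm{red}}$ contains $y$.

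Within your own framework, both gaps could be repaired by running the argument of \cref{wild_horizontal_generalization} on the regular schemes $X$ and $Y$ \emph{before} any localization. Composing $v$ with the discrete valuations attached to a regular system of parameters of $\cO_{X,\underline x}$ gives a wild $S$-valuation of $k(Y)$ with generic support. After that, a purely inseparable base change, \cref{purely_inseparable_wild}, and restriction to a smooth dense open put you in a position to apply \cref{R_f_divisorial_point}.

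That would be a genuinely different route, avoiding \cref{comparison_tameness_KeSch} altogether. It would still require proving your localization claim and the trivially valued analogue of \cref{description_divisorial_points_Spa}. As written, the proposal contains neither this step nor a correct substitute for it.
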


\begin{proof}
 Assume that~$f$ is divisor-tame, hence curve-tame by \cref{comparison_tameness_KeSch}.
 We want to show that $\Spa(Y,S) \to \Spa(X,S)$ is tame at a given point~$(y,v)$ of $\Spa(Y,S)$.
 The point~$y$ is mapped to a point of~$T$, whose inclusion we denote by $\Spec k \hookrightarrow T$.
 Let~$k^+$ be the valuation ring of the valuation~$v$ restricted to~$k$.
 By our assumption that~$T$ is a Dedekind scheme, $\Spec k^+ \to T$ is the localization of~$T$ at some point.
 We obtain an inclusion $\Spa(k,k^+) \to \Spa(T,T)$ such that~$(y,v)$ lies over the closed point of $\Spa(k,k^+)$.

 We now consider the base change of
 \[
  \begin{tikzcd}
   \Spa(Y,S)	\ar[rr]	\ar[dr]	&			& \Spa(X,S)	\ar[dl]	\\
								& \Spa(T,T)
  \end{tikzcd}
 \]
 along $\Spa(k,k^+) \to \Spa(T,T)$.
 We arrive at a morphism
 \[
  \begin{tikzcd}
   \Spa(Y',S')	\ar[rr]	\ar[dr]	&				& \Spa(X',S')	\ar[dl]	\\
								& \Spa(k,k^+)
  \end{tikzcd}
 \]
 with
 \begin{align*}
  Y'	& = Y \times_T \Spec k,	\\
  X'	& = X \times_T \Spec k				\\
  S'	& = S \times_T \Spec k^+.
 \end{align*}
 Since curve-tameness is stable under base change, the morphism $f' \colon  Y' \to X'$ is curve-tame as well.

 We are almost in the position to apply \cref{tame_divisorial_points}.
 The only issue we have is that we cannot guarantee that $\Spa(X',S')$ is smooth over $(k,k^+)$.
 In order to put ourselves in a better position, we first replace $Y' \to X'$ by its base change along $\overline{\{f(y)\}} \to X'$ (where $\overline{\{f(y)\}}$ carries the reduced scheme structure).
 In this way we achieve that~$y$ is a generic point of~$Y'$.

 Since $X'$ is of finite type over~$k$, there is a finite purely inseparable extension $k'/k$ such that $(X'_{k'})_\reduced$ is geometrically reduced.
 By \cref{purely_inseparable_wild} we can replace~$k$ by~$k'$.
 Moreover, we may replace~$X'$ by its underlying reduced subscheme.
 Then~$X'$ is geometrically reduced and thus generically smooth.
 Let~$U'$ be a smooth dense open subscheme of~$X'$ with preimage~$V'$ in~$Y'$.
 Then the restriction of~$f'$ to~$U'$ defines a finite étale morphism of smooth rigid analytic spaces over $(k,k^+)$:
 \[
  f'|_{U'} \colon  \Spa(V',S') \longrightarrow \Spa(U',S').
 \]
 Moreover, $V'$ is still curve-tame over~$U'$, hence divisor tame.
 It thus follows from \cref{tame_divisorial_points} that $f'|_{U'}$ is tame.
 Since $y$ is a generic point of~$Y'$, it is contained in~$V'$.
 So in particular, $f'$ is tame at $(y,v)$.
 This is equivalent to~$f$ being tame at $(y,v)$.
\end{proof}

We can now finally deduce tameness from curve-tameness.
When discussing divisor-tameness we had to work with normal schemes but curve-tameness is also well defined for non-normal schemes.
Accordingly, we do not require normality in the following statement.

\begin{corollary} \label{curve_tame_implies_tame}
 Suppose~$S$ is of finite type over a Dedekind scheme~$T$.
 Let $f \colon  Y \to X$ be a finite étale morphism in $\Sch/S$.
 If~$f$ is curve-tame, it is tame.
\end{corollary}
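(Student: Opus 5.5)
We reduce \cref{curve_tame_implies_tame} to \cref{divisor_tame_implies_tame}, removing the regularity and normality hypotheses by working locally on $\Spa(X,S)$. Tameness of $\Spa(Y,S)\to\Spa(X,S)$ is a pointwise condition. So fix a point $(y,v)$ of $\Spa(Y,S)$ with image $(x,v|_{k(x)})$; we must show that $k(y)/k(x)$ is tame with respect to $v$. The wild degree at this point depends only on the residue field extension. It therefore suffices to show that the base change of $f$ to a suitable regular scheme through $x$ is tame at the induced point.

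First we replace $X$ by the closed integral subscheme $Z=\overline{\{x\}}$ with its reduced structure, and $Y$ by $Y\times_X Z$. By \cref{curve_tame_bc} the morphism $Y\times_X Z\to Z$ is still curve tame. The point $(y,v)$ lifts to a point of $\Spa(Y\times_X Z,S)$ whose support is a generic point lying over the generic point of $Z$, and its residue field extension is exactly $k(y)/k(x)$ with the valuation $v$. So we may assume that $X$ is integral and that $y$ lies over the generic point of $X$.

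Next we want $X$ regular. An integral scheme of finite type over an excellent base has a dense open regular locus $U\subseteq X$, because excellence makes the regular locus open and the generic point is regular. Restrict $f$ to $U$ and let $V=f^{-1}(U)$. Since $f$ is étale, $V$ is regular as well. Curve tameness passes to open subschemes: curves in $U$ are curves in $X$, and the base change is the same, so this is again an instance of \cref{curve_tame_bc}. The point $(y,v)$ has support at a generic point of $V$, so it lies in $\Spa(V,S)$. If $V$ is disconnected we treat each connected component separately.

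Now \cref{comparison_tameness_KeSch} gives that $V\to U$ is divisor tame, and \cref{divisor_tame_implies_tame} gives that $\Spa(V,S)\to\Spa(U,S)$ is tame. In particular it is tame at $(y,v)$, and hence so is $f$. The main obstacle is to check that the base change along the closed immersion $Z\to X$ gives a point of the fibre product adic space with the same completed residue fields. This should be routine for discretely ringed spaces $\Spa(-,S)$, since the support of the point is unchanged. A second point to check is that \cref{divisor_tame_implies_tame} really applies to $U$ over $S$ with only finite-type hypotheses, which holds because $U\in\Sch/S$.
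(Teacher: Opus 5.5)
Your proposal is correct and is essentially the paper's proof: pass to the closure of the image point via \cref{curve_tame_bc}, shrink to a dense regular open, use \cref{comparison_tameness_KeSch} to get divisor-tameness, and conclude with \cref{divisor_tame_implies_tame}. The only difference is that you keep all of $Y\times_X\overline{\{x\}}$ and split into connected components after shrinking to the regular locus, whereas the paper passes to $\overline{\{y\}}$ right away; your ordering is the cleaner one, since before shrinking $\overline{\{y\}}$ need not be a connected component when $\overline{\{x\}}$ is not unibranch.
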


\begin{proof}
 Suppose that~$f$ is curve-tame.
 Let $(y,v) \in \Spa(Y,S)$ be a point with image $(x,w) \in \Spa(X,S)$.
 Then~$f$ is tame at $(y,v)$ if and only if the induced morphism
 \[
  f_y \colon \overline{\{y\}} \longrightarrow \overline{\{x\}}
 \]
 of topological closures is tame at $(y,v)$.
 Both~$\overline{\{y\}}$ and~$\overline{\{x\}}$ are integral schemes.
 Moreover, $\overline{\{y\}}$ is a connected component of the base change of~$f$ to~$\overline{\{x\}}$.
 In particular, $f_y$ is finite étale and curve tame by \cref{curve_tame_bc}.
 Base changing further to a non-empty open subset of~$\overline{\{x\}}$, we may assume that $\overline{\{x\}}$ and~$\overline{\{y\}}$ are regular.
 By \cref{comparison_tameness_KeSch}, $f_y$ is divisor-tame.
 Finally, we can apply \cref{divisor_tame_implies_tame} to conclude that~$f_y$ and thus~$f$ is tame at $(y,v)$.
\end{proof}

We would like to remark that \cref{divisor_tame_implies_tame} was already known before in the case where~$S$ is the spectrum of a field.
It is due to Alexander Schmidt (see \cite{quasi-purity} and uses alterations.

\section{Numerical tameness}

Numerical tameness is a concept that differs in nature from the tameness conditions treated in \cref{sect:tameness_notions} in the sense that it is not valuation theoretic.
Moreover, it depends on a chosen compactification.
The notion of numerical tameness arose when studying the Euler characteristic for coherent sheaves (see \cite{CE92})
It is a natural tameness condition when working with Galois modules and local systems.
Let us explain the setup.

We keep the base scheme~$S$ from \cref{sect:tameness_notions}, so~$S$ is integral, pure-dimensional, separated, and excellent.
In $\Sch/S$ we consider a finite étale Galois morphism $f \colon  Y \to X$ of normal connected schemes.
We choose a normal compactification~$\bar{X}$ of~$X$ over~$S$ and define~$\bar{Y}$ to be the normalization of~$\bar{X}$ in~$Y$.
To sum up, we have a Cartesian digram of schemes
\[
 \begin{tikzcd}
  Y	\ar[r,open]	\ar[d,"fet"',"\text{Galois}"]	& \bar{Y}	\ar[d,"\bar{f}"]	\\
  X	\ar[r,open]									& \bar{X}.
 \end{tikzcd}
\]
For a point $y \in \bar{Y}$ with image $x \in \bar{X}$ we consider the decomposition group $G_y(\bar{Y}/\bar{X})$ and the inertia group $I_y(\bar{Y}/\bar{X})$ in the Galois group $G = G(Y/X)$.
They are defined as follows.
\begin{align*}
 G_y(\bar{Y}/\bar{X}) &= \{g \in G \mid gy = y\},	\\
 I_y(\bar{Y}/\bar{X}) &= \ker(G_y(\bar{Y}/\bar{X}) \to \Aut(k(y)/k(x))).
\end{align*}
We say that~$\bar{f}$ is \emph{numerically tame} at a point $y \in \bar{Y}$ if the order of the inertia group $I_y(\bar{Y}/\bar{X})$ is prime to the residue characteristic of~$y$.
We say that $f$ is \emph{numerically tame(ly ramified) along} $\bar{X} \setminus X$ if~$\bar{f}$ is numerically tame at every point $y \in \bar{Y}$.
If we start with a finite étale morphism $f \colon Y \to X$ that is not necessarily Galois, we call it numerically tame if it can be dominated by a numerically tame Galois cover.

Numerical tameness of~$f$ along $\bar{X} \setminus X$ implies valuation tameness and hence all the other tameness conditions treated in \cref{sect:tameness_notions} (see \cite[, Theorem~5.3]{KeSch10}).
The key observation is that the inertia group $T_y(\bar{Y}/\bar{X})$ at a point $y \in \bar{Y}$ contains the inertia groups associated to all valuations of~$K(Y)$ centered on~$y$.
The converse is not always true.
There are tame covers that are not numerically tame for a given compactification (see \cite[Example~2 in the appendix]{KeSch10}).
However, if~$\bar{X}$ is smooth and $\bar{X} \setminus X$ is an snc divisor, numerical tameness does imply tameness:

\begin{theorem}[\cite{KeSch10}, Theorem~5.4]
 Let $\bar{X} \in \Sch/S$ be regular and proper and $X \subseteq \bar{X}$ a dense open subscheme such that $\bar{X} \setminus X$ is an snc divisor.
 If an étale covering $f \colon Y \to X$ is tamely ramified along $\bar{X} \setminus X$ (meaning that $K(Y)/K(X)$ is tamely ramified at the valuations corresponding to the irreducible components of $\bar{X} \setminus X$), then~$f$ is numerically tame along $\bar{X} \setminus X$.
\end{theorem}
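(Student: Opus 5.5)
The plan is the classical one: Abhyankar's lemma at every point of the boundary, combined with Zariski--Nagata purity of the branch locus.

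\emph{Reductions.} First I reduce to the case where $f$ is Galois. Tame ramification at the valuations $v_D$ is preserved under passing to composita, so the Galois closure of $Y\to X$ is again tamely ramified along $\bar{X}\setminus X$. Numerical tameness of the Galois closure then gives numerical tameness of $f$ by definition. So let $f$ be Galois with group $G$, let $\bar{Y}$ be the normalization of $\bar{X}$ in $Y$, and fix $y\in\bar{Y}$ over $x\in\bar{X}$. If the residue characteristic of $x$ is $0$ there is nothing to prove, so let it be $p>0$.

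\emph{Strict henselization.} Inertia groups are computed after strict henselization: $I_y(\bar{Y}/\bar{X})$ is the Galois group of one connected component of $Y\times_X \Spec(\cO^{sh}_{\bar{X},x})\setminus D$ over $U:=\Spec(\cO^{sh}_{\bar{X},x})\setminus D$. Hence it suffices to show that every connected finite étale Galois cover $V\to U$ arising this way has degree prime to $p$.

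\emph{Kummer cover.} Since $D$ is snc, the branches of $D$ through $x$ are cut out by part $t_1,\dots,t_r$ of a regular system of parameters of $A:=\cO^{sh}_{\bar{X},x}$. By hypothesis the ramification indices $e_i$ of $V$ along the $D_i$ are prime to $p$. Put $n=\mathrm{lcm}(e_i)$, which is also prime to $p$, and set
\[
 A_n := A[T_1,\dots,T_r]/(T_i^n-t_i).
\]
This ring is again regular, local and strictly henselian, and the $T_i$ still form part of a regular system of parameters. The map $\Spec A_n\setminus\{T_1\cdots T_r=0\}\to U$ is a Galois cover with group $\mu_n^r$, of order prime to $p$.

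\emph{Conclusion.} By Abhyankar's lemma, applied at the generic points of the divisors $T_i=0$, which are discrete valuation rings, the normalization $W$ of $\Spec A_n$ in the compositum of $V$ with the Kummer cover is unramified in codimension one. Purity of the branch locus over the regular scheme $\Spec A_n$ then gives that $W\to \Spec A_n$ is finite étale. Since $A_n$ is strictly henselian, this cover splits completely. Consequently $V$ is dominated by the Kummer cover, and $\mathrm{Gal}(V/U)$ is a quotient of $\mu_n^r$. Its order is therefore prime to $p$, so $|I_y|$ is prime to $p$.

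The main obstacle is the Kummer step: checking that $A_n$ is regular and strictly henselian, and that Abhyankar's lemma applies at all boundary components through $x$ simultaneously. This is delicate in the mixed characteristic situation, where some $t_i$ may divide $p$ or the residue field at a generic point of $D_i$ may be imperfect. I would handle it using that $n$ is prime to $p$, so that $A\to A_n$ is étale over $A[1/(t_1\cdots t_r)]$ and tamely ramified along each $T_i=0$ in the classical discrete-valuation sense.
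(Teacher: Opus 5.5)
Your argument is correct and is the standard Abhyankar-lemma-plus-purity proof. The paper does not prove this statement itself; it quotes \cite[Theorem~5.4]{KeSch10}, which rests on the same description (recalled in the paper's introduction) of covers tame along an snc divisor as étale-locally Kummer covers of degree prime to $p$. The step you flag as the main obstacle is unproblematic in any characteristic. $A_n$ is finite free over $A$ with local closed fibre $(A/\m_A)[T_1,\dots,T_r]/(T_1^n,\dots,T_r^n)$, so it is strictly henselian local, and its maximal ideal is generated by the $d=\dim A_n$ elements $T_1,\dots,T_r,t_{r+1},\dots,t_d$, so it is regular. Imperfect residue fields at the $D_i$ do no harm, because tameness already includes separability of the residue extensions.
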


If we start with a regular scheme~$X$ in $\Sch/S$ and assume that resolutions of singularities exist over~$S$, we can find a regular compactification~$\bar{X}$ of~$X$ whose boundary $\bar{X} \setminus X$ is an snc divisor.
For this compactification any finite étale morphism $f \colon Y \to X$ is numerically tame along $\bar{X} \setminus X$.
However, in positive and mixed characteristic, resolution of singularities is not known in higher dimensions and we cannot expect to find a compactification that works for all étale covers.
What we \emph{can} do is to construct a compactification that depends on the étale cover $Y \to X$.
This will be achieved by a compactness argument on the limit over all compactifications of~$X$.
Indeed, the limit over all compactifications is a reasonable locally ringed space that has been studied in \cite{Tem11} and carries the name \emph{relative Riemann Zariski space}:
\[
 \RZ_X(S) = \lim_{i \in I} \bar{X}_i,
\]
where the directed set~$I$ parametrizes all compactifications $\bar{X} \to S$ of~$X$.
By \cite[Corollary~3.4.7]{Tem11}, the relative Riemann Zariski space~$\RZ_X(S)$ can be viewed as the subspace of $\Spa(X,S)$ consisting of all points of $\Spa(X,S)$ that do not admit a nontrivial horizontal specialization.
The relative Riemann Zariski space $\RZ_X(X)$ is a quasi-compact spectral space.
If $x \in \RZ_X(S)$ is a point with image $x_i \in \bar{X}_i$ for $i \in I$, then
\[
 \cO_{\Spa(X,S),x}^+ = \colim_i \cO_{\bar{X}_i,x_i}.
\]

\begin{lemma} \label{numerically_tame_open}
 Let $f\colon Y \to X$ be a finite étale Galois morphism in $\Sch/S$ of connected normal schemes with normal compactifications~$\bar{X}$ and~$\bar{Y}$ such that~$\bar{Y}$ is the normalization of~$\bar{X}$ in~$Y$.
 \[
  \begin{tikzcd}
   Y	\ar[r,open]	\ar[d,"f"']	& \bar{Y}	\ar[d,"\bar{f}"]	\\
   X	\ar[r,open]				& \bar{X}
  \end{tikzcd}
 \]
 Then the set of all points $y \in \bar{Y}$ where~$\bar{f}$ is numerically tame is open.
\end{lemma}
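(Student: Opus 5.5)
We have to show that the numerically tame locus of $\bar f$ in $\bar Y$ is open. The plan is to compare inertia groups at a point with those at its generizations. If $y$ specializes to $y_0$, we show $I_y \subseteq I_{y_0}$; then tameness at $y_0$ forces tameness at $y$ up to a residue characteristic issue. Since $\bar Y$ is a noetherian scheme, the numerically tame locus will then be stable under generization. It remains to prove constructibility, or more directly to exhibit an open neighbourhood of each tame point.

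\textbf{Step 1: generization.} Let $y \rightsquigarrow y_0$ be a specialization in $\bar Y$ with images $x \rightsquigarrow x_0$. Take $g \in G$ with $gy = y$ acting trivially on $k(y)$. We claim $g$ fixes $y_0$ and acts trivially on $k(y_0)$, i.e.\ $I_y \subseteq I_{y_0}$. To see this, work with the normal integral scheme $Z = \overline{\{y\}}$ (normalized if needed) and the local ring $\mathcal O_{\bar Y, y_0}$.

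Actually it is cleaner to use the standard description of inertia via strict henselization:
\[
 I_{y} = \operatorname{Stab}_G(\bar y) \text{ acting on } \operatorname{Spec} \mathcal O^{sh}_{\bar Y,\bar y},
\]
namely the subgroup fixing a geometric point $\bar y$. This can be phrased as follows: $I_y$ is the Galois group of $\bar Y \times_{\bar X} \operatorname{Spec}\mathcal O^{sh}_{\bar X,\bar x}$ over its component through $\bar y$. Choosing compatible geometric points, the strict henselization at $y$ is a localization of the strict henselization at $y_0$. Hence inertia at $y$ is a subgroup of inertia at $y_0$; this is the standard fact that inertia groups only grow under specialization. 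Since $\operatorname{char} k(y)$ is either equal to $\operatorname{char} k(y_0)$ or equal to $0$, if $p \nmid |I_{y_0}|$ then $|I_y|$ is prime to $\operatorname{char} k(y)$. So the numerically tame locus $T$ is closed under generization.

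\textbf{Step 2: constructibility.} Consider the finitely many subgroups $H \le G$. For each $H$, the locus
\[
 \{ y \in \bar Y : H \subseteq I_y \}
\]
is closed. Indeed, $H \subseteq G_y$ is the closed condition that $y$ lies in the fixed locus of $H$. The condition that $H$ acts trivially on $k(y)$ is closed as well: it means that $y$ lies in the closed subscheme cut out by the ideal generated by $h(a) - a$ for $a \in \mathcal O_{\bar Y}$ and $h \in H$, taken on an $H$-stable affine open, which exists because $\bar f$ is finite. Now $T$ is the complement of
\[
 \bigcup_{\ell \text{ prime},\ \ell \mid |G|}\ \bigcup_{H \text{ cyclic of order } \ell} \{ y : H \subseteq I_y,\ \operatorname{char} k(y) = \ell \}.
\]
Each set in this union is closed, being the intersection of a closed set with the closed fiber over $\ell$ (i.e.\ $V(\ell)$). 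Here we use Cauchy's theorem: $p \mid |I_y|$ if and only if $I_y$ contains an element of order $p$. A finite union of closed sets is closed, so $T$ is open. Note that this also makes Step 1 unnecessary.

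The main obstacle is the claim that the set $\{y : h \text{ acts trivially on } k(y),\ hy = y\}$ is closed. I will verify it locally. On $\bar Y = \operatorname{Spec} B$ over $\operatorname{Spec} A$, with $B$ finite over $A$ and $G$-stable, this set is exactly $V(\{h(b) - b : b \in B\})$. A prime $\mathfrak q$ contains all $h(b) - b$ if and only if $h\mathfrak q = \mathfrak q$ and $h$ induces the identity on $B/\mathfrak q$, and hence on $k(\mathfrak q)$. Conversely, if $h$ fixes $\mathfrak q$ and acts trivially on $k(\mathfrak q)$, then $h(b) - b \in \mathfrak q$ for all $b$. So the locus is the closed set $V(I_h)$, with $I_h$ the ideal generated by the elements $h(b) - b$.
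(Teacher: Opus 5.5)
Your proof is correct, and it takes a genuinely different, more elementary route than the paper.

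\textbf{The paper's route.} It fixes a numerically tame point $y$ and passes to the quotient $\bar Y_1=\bar Y/I_y$. It then invokes the standard but nontrivial SGA1-type fact that $\bar Y_1\to\bar X$ is étale at the image $y_1$ of $y$. It shrinks to an étale neighbourhood $U_1$ of $y_1$ avoiding residue characteristics dividing $|I_y|$. On the preimage $U$ it concludes, because $\bar f|_U$ factors as a Galois cover with group $I_y$ followed by an étale map.

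\textbf{Your route.} You observe that for each $h\in G$, the set $\{y : h\in I_y\}$ meets every $G$-stable affine $\operatorname{Spec}B=\bar f^{-1}(\operatorname{Spec}A)$ in exactly $V(I_h)$, where $I_h=(h(b)-b : b\in B)$. Your check of both inclusions is right; the step $h\mathfrak q=\mathfrak q$ is immediate, since $h\mathfrak q\subseteq\mathfrak q$ forces equality for $h$ of finite order. By Cauchy's theorem the non-tame locus is then the finite union $\bigcup_{\ell\mid |G|}\bigcup_{\operatorname{ord}h=\ell}V(I_h)\cap V(\ell)$, hence closed.

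\textbf{Comparison.} Your argument avoids both the étaleness of the quotient by inertia and the openness of the étale locus. It also gives more: an explicit description of the wild locus, and upper semicontinuity of inertia, i.e.\ $I_{y'}\subseteq I_y$ for all $y'$ in a neighbourhood of $y$. The paper needs that semicontinuity too, but obtains it only implicitly through the étaleness of $\bar Y_1\to\bar X$. In exchange, the paper's route yields the local factorization of $\bar f$ near $y$ into an $I_y$-Galois cover followed by an étale map.

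\textbf{Two small remarks.} First, Step 1 is indeed superfluous: if $y$ specializes to $y_0$, then $\mathfrak p_{y_0}\supseteq\mathfrak p_y\supseteq I_h$ gives $I_y\subseteq I_{y_0}$ at once. Second, the sentence asserting that $H\subseteq G_y$ alone is a closed condition is false. Already for $\Z[i]$ with complex conjugation, the generic point is fixed while $(2+i)$ and $(2-i)$ are swapped. You never actually use that sentence, since what you prove closed is the combined condition $V(I_h)$, so the argument stands.
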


\begin{proof}
 We denote by~$G$ the Galois group of the cover $f \colon  Y \to X$.
 Suppose that~$\bar{f}$ is numerically tame at a given point $y \in \bar{Y}$ of residue characteristic~$p$.
 The inertia subgroup~$I_y$ at~$y$ defines an intermediate cover:
 \[
  \begin{tikzcd}
   \bar{Y}	\ar[dd,bend left,"f"]	\ar[d,"I_y"']	\\
   \bar{Y}_1	\ar[d]	\\
   \bar{X}
  \end{tikzcd}
 \]
 Let~$y_1$ be the image of~$y$ in~$\bar{Y}_1$.
 Since $\bar{Y}_1 \to \bar{X}$ is étale at~$y_1$, there is an open neighborhood $U_1 \subseteq \bar{Y}_1$ of~$y_1$ such that $U_1 \to \bar{X}$ is étale.
 We can choose~$U_1$ small enough to ensure that the residue characteristic of any point in~$U_1$ is prime to $\lvert I_y \rvert$.
 Its preimage $U := U_1 \times_{\bar{Y}_1} \bar{Y}$ is an open neighborhood of~$y$ and the restriction of~$\bar{f}$ to~$U$ factors as
 \[
  U	\longrightarrow U_1	\longrightarrow X.
 \]
 We know that $U \times_{\bar{X}} X \to U_1 \times_{\bar{X}} X$ is finite Galois with Galois group~$I_y$, which is prime to all the residue characteristics of~$U$, so the first morphism is numerically tame.
 The second morphism is even étale, so~$\bar{f}$ is numerically tame in the open neighborhood~$U$ of~$y$.
\end{proof}

\begin{theorem} \label{compnumerical}
 Let $f \colon Y \to X$ be a finite étale cover of connected normal schemes in $\Sch/S$ such that $\Spa(Y,S) \to \Spa(X,S)$ is tame.
 Then there is a compactification~$\bar{X}$ of~$X$ over~$S$ such that~$f$ is numerically tame along $\bar{X} \setminus X$.
\end{theorem}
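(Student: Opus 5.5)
We argue by compactness on the relative Riemann--Zariski space $\RZ_X(S)=\lim_i \bar{X}_i$. First we reduce to the Galois case. Let $\tilde{Y}\to X$ be a Galois closure of $f$. The residue field extensions of $\Spa(\tilde{Y},S)\to\Spa(X,S)$ are composita of conjugates of those of $\Spa(Y,S)\to\Spa(X,S)$, and composita of tame extensions are tame, so $\tilde{Y}\to X$ is again tame. Numerical tameness of $\tilde{Y}\to X$ along $\bar{X}\setminus X$ gives that of $f$ by definition. From now on $f$ is Galois with group $G$. For each compactification $\bar{X}_i$ we write $\bar{Y}_i$ for the normalization of $\bar{X}_i$ in $Y$, and $W_i\subseteq\bar{Y}_i$ for the locus where $\bar{f}_i$ is numerically tame. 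By \cref{numerically_tame_open}, $W_i$ is open. The spaces $\bar{Y}_i$ form a cofinal system of compactifications of $Y$, so $\lim_i\bar{Y}_i=\RZ_Y(S)$.

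\textbf{Key step: pointwise tameness on the limit.} Let $y\in\RZ_Y(S)$ have images $y_i\in\bar{Y}_i$. We want to show $y_i\in W_i$ for some $i$. Regard $y$ as a point of $\Spa(Y,S)$ with image $x$. We have $\cO^+_{\Spa(Y,S),y}=\colim_i\cO_{\bar{Y}_i,y_i}$, and this is a local ring whose residue field is the specialization field of $y$. The inertia groups $I_{y_i}(\bar{Y}_i/\bar{X}_i)\subseteq G$ form a decreasing family of subgroups of a finite group, so they stabilize. Passing to the colimit, the stable group is the inertia group of $G$ acting on the henselized local ring $\colim_i\cO_{\bar{Y}_i,y_i}$. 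This equals the inertia group of the valuation $y$ in $k(\supp y)/k(\supp x)$, intersected with the part of $G$ fixing $\supp y$. Tameness of $\Spa(Y,S)\to\Spa(X,S)$ at $y$ says this valuation-theoretic inertia has order prime to $p=\ch k(y)^\succ$, since its $p$-part is the wild inertia. Hence $y_i\in W_i$ for $i$ large.

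This identification is the step I expect to be the main obstacle. Two difficulties arise. First, the support of $y$ need not be the generic point, so the inertia of the point of $\bar{Y}_i$ has to be compared with the inertia of a valuation on a residue field of $Y$. Second, the residue field of $y_i$ grows with $i$, and the quotient $G_{y_i}\to\Aut(k(y_i)/k(x_i))$ has to be controlled in the limit. My plan is to use that $G$ is finite and that $\RZ_Y(S)\to\RZ_X(S)$ is the quotient by $G$ (normalization commutes with the limit). Then decomposition and inertia at $y$ are computed by the Galois action on the local ring $\cO^+_y$. This reduces the question to the classical statement that the inertia of a valuation ring in a Galois extension has wild part equal to the $p$-Sylow of the ramification group.

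\textbf{Compactness.} Write $\pi_i\colon\RZ_Y(S)\to\bar{Y}_i$ for the projections. The sets $\pi_i^{-1}(W_i)$ are open in $\RZ_Y(S)$ and, by the key step, cover it. Since $\RZ_Y(S)$ is quasi-compact, finitely many of them suffice. Because $I$ is directed, we can choose $j$ dominating these finitely many indices. The preimage of $W_i$ in $\bar{Y}_j$ is contained in $W_j$, since inertia can only shrink under blowing up the base. Therefore $\pi_j^{-1}(W_j)=\RZ_Y(S)$. The projection $\pi_j$ is surjective, so $W_j=\bar{Y}_j$. Hence $f$ is numerically tame along $\bar{X}_j\setminus X$.
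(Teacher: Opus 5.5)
Your proposal is correct and follows the paper's proof. The shared steps are reduction to the Galois case, openness of the numerically tame locus (\cref{numerically_tame_open}), and the identification of the stable inertia $\bigcap_i I_{y_i}$ with the valuation-theoretic inertia at $y\in\RZ_Y(S)$. Note that $\colim_i\cO_{\bar{Y}_i,y_i}$ is not henselian in general, but only its residue field $k(y)^\succ=\colim_i k(y_i)$ enters, so this slip is harmless. The only difference is in the compactness step: you apply Zariski quasi-compactness of $\RZ_Y(S)$ to the open cover $\pi_i^{-1}(W_i)$ and then use surjectivity of $\RZ_Y(S)\to\bar{Y}_j$, whereas the paper works with the closed complements $Z_i$ and uses compactness in the constructible topology to get some $Z_i=\emptyset$.
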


\begin{proof}
  It is easy to reduce to the case where~$f$ is Galois with Galois group~$G$.
  Let~$I$ be a directed set parametrizing the compactifications $\bar{X}_i$ of~$X$ for $i \in I$.
  For every compactification~$\bar{X}_i$ of~$X$ over~$S$ we denote by~$\bar{Y}_i$ the compactification of~$Y$ obtained by taking the normalization of~$\bar{X}_i$ in~$Y$.
  The subset $Z_i \subseteq \bar{Y}_i$ where $\bar{Y}_i \to \bar{X}_i$ is \emph{not} numerically tame is closed by \cref{numerically_tame_open}.
  We now consider a transition map $\varphi_{ij} \colon \bar{Y}_i \to \bar{Y}_j$.
  Since numerical tameness is preserved by base change (\cite[Lemma~5.2]{KeSch10}), the transition map~$\varphi_{ij}$ maps~$Z_i$ into~$Z_j$.
  We obtain a projective system $(Z_i)_{i \in I}$, whose limit~$Z$ is a closed subset of $\RZ_Y(S) = \lim_{i \in I} \bar{Y}_i$.

  Let us now study the ramification behaviour of a point $y \in \RZ_Y(S)$ under the morphism $\RZ_Y(S) \to \RZ_X(S)$.
  We denote by $y^{\ad}$ the image of~$y$ in the adic space $\Spa(Y,S)$ and by $\underline{y} \in Y$ the support of~$y^{\ad}$.
  Moreover, we use the same notation $x^{\ad} \in \Spa(X,S)$ and $\underline{x} \in X$ for the image~$x$ of~$y$ in~$\RZ_X(S)$.
  The residue field extension $k(y^{\ad})/k(x^{\ad})$ coincides with $k(\underline{y})/k(\underline{x})$ and is a Galois extension of valued fields with Galois group naturally isomorphic to
  \[
   G_{\underline{y}} = \{g \in G \mid g\underline{y} = \underline{y}\} \subseteq G.
  \]
  Contained in~$G_{\underline{y}}$ we have the decomposition group and the inertia group at~$y$:
  \begin{align*}
   G_y	&= \{g \in G \mid gy = y\},	\\
   I_y	&= \ker\left(G_y \longrightarrow \Aut (k(y)^\succ/k(x)^\succ)\right).
  \end{align*}
  The étale cover $\Spa(Y,S) \to \Spa(X,S)$ ramifies at~$y$ if and only if~$I_y$ is nontrivial and the ramification is wild if and only if $p=\ch k(y^\succ)$ divides the order of~$I_y$.

  For every $i \in I$ we denote by $y_i \in \bar{Y}_i$ and $x_i \in \bar{X}$ the centers of the valuations~$y$ and~$x$.
  So~$y$ can be identified with the projective system $(y_i)_{i \in I}$ and likewise for~$x$.
  Accordingly, the decomposition group $G_{y_i}$ is contained in~$G_y$ and
  \[
   G_y = \bigcap_{i \in I} G_{y_i}.
  \]
  The stalks of the structure sheaves of $\Spa(Y,S)$ at~$y^{\ad}$ and their residue fields are given by
  \begin{align*}
   \cO_{\Spa(Y,S),y^{\ad}}= \cO_{Y,\underline{y}},		\qquad				& k(y^{\ad}) = k(\underline{y})	\\
   \cO^+_{\Spa(Y,S),y^{\ad}} = \colim_{i \in I} \cO_{\bar{Y}_i,y_i}, \qquad & k(y^{\ad})^\succ = \colim_{i \in I} k(y_i).
  \end{align*}
  From this and the definition of the respective inertia groups we deduce the following relation.
  \[
   I_y = \bigcap_{i \in I} I_{y_i}
  \]
  Consequently, $Z = \lim_{i \in I} Z_i$ identifies with the wild locus of $\RZ_Y(S) \to \RZ_X(S)$, which is empty by hypothesis.

  We would now like to invoke a compactness argument to conclude that one of the~$Z_i$'s has to be empty.
  However, this does not work in the underlying topology of~$\bar{Y}_i$ and~$\RZ_Y(S)$ as the subspaces~$Z_i$ are only quasi-compact and not necessarily compact.
  To make this work we need to pass to the respective constructible topologies where~$Z_i$ are compact.
  Now we can conclude that~$Z_i$ has to be empty for some $i \in I$ and for this $i$ the morphism $f \colon Y \to X$ is numerically tame with respect to $\bar{X}_i \setminus X$.
\end{proof}

Combining \cref{compnumerical} with \cref{curve_tame_implies_tame} we obtain an answer to Hélène Esnault's question:

\begin{corollary} \label{compcurvenumerical}
 Suppose that~$S$ is integral, pure dimensional, separated, and of finite type over an excellent Dedekind scheme.
 For a regular scheme~$X$ of finite type over~$S$ and a curve tame cover $f \colon Y \to X$, we can find a compactification~$\overline{X}$ of~$X$ over~$S$ such that~$f$ is numerically tame with respect to~$\overline{X}$.
\end{corollary}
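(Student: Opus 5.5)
The corollary should follow by combining \cref{curve_tame_implies_tame} with \cref{compnumerical}. The work is in checking the hypotheses and reducing to the Galois case. First, by \cref{curve_tame_implies_tame}, applied with $T$ the excellent Dedekind scheme over which $S$ is of finite type, the curve tame cover $f\colon Y\to X$ is tame, so $\Spa(Y,S)\to\Spa(X,S)$ is tame.

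Next, I want connected normal schemes so that \cref{compnumerical} applies. Since $X$ is regular, it is normal, and it decomposes as a disjoint union of connected components. I will treat one component at a time and later glue compactifications. Concretely, I compactify each component $X_j$ and take $\overline{X}$ to be the disjoint union of these compactifications; this is a compactification of $X$ over $S$. Over a connected $X_j$, the cover $Y_j = f^{-1}(X_j)$ is finite étale over a regular scheme, hence regular, hence normal. I then split $Y_j$ into its connected components and treat each separately. Alternatively, I pass to a Galois closure, which stays tame: in \cref{compnumerical} the reduction to the Galois case is called easy, and tameness of residue field extensions is preserved under taking the Galois closure of tame extensions.

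Having arranged this, \cref{compnumerical} applies to each connected piece. For the several connected components of $Y$ over a fixed $X_j$, I get several compactifications $\overline{X}_{j,\alpha}$ and need a common one. I pass to a compactification dominating all of them, which exists because compactifications of $X_j$ over $S$ form a directed system (the index set $I$ in the proof of \cref{compnumerical}). Numerical tameness is preserved under such dominating modifications by the base-change stability \cite[Lemma~5.2]{KeSch10}, as used in the proof of \cref{compnumerical}. Simpler still, I apply \cref{compnumerical} directly to a connected Galois cover dominating all of $Y_j$; numerical tameness of $f$ is defined precisely via domination by a numerically tame Galois cover.

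The main obstacle is bookkeeping rather than mathematics. One check is that the hypotheses of \cref{curve_tame_implies_tame} (finite type over a Dedekind scheme) and of \cref{compnumerical} ($S$ integral, pure dimensional, separated, excellent) are met; excellence of $S$ follows since it is of finite type over an excellent Dedekind scheme. The other is that tameness passes to the Galois closure. For the latter I will argue at the level of points of $\Spa$: the compositum of tame extensions of valued fields is tame, so the Galois closure of a tame cover is tame.
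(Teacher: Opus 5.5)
Your proposal is correct and follows the paper's own argument: the paper obtains this corollary precisely by combining \cref{curve_tame_implies_tame} (curve tameness implies tameness of $\Spa(Y,S)\to\Spa(X,S)$) with \cref{compnumerical}. Your extra bookkeeping makes explicit reductions the paper leaves implicit, and it is sound. This covers excellence of $S$, splitting into connected components, passing to a Galois closure, and the fact that numerical tameness persists on dominating compactifications because inertia groups can only shrink.
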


\bibliographystyle{meinStil}
\bibliography{citations}

\end{document}